\documentclass[12pt, dvipsnames]{amsart}
\usepackage{amssymb}
\usepackage{stmaryrd}
\usepackage[shortlabels]{enumitem}
\usepackage[all]{xy}
\usepackage{xcolor}
\usepackage{mathrsfs}
\usepackage[margin=1in]{geometry} 
\usepackage{hyperref}
\usepackage{eucal}
\hypersetup{bookmarksdepth=2}
\hypersetup{colorlinks=true}
\hypersetup{linkcolor=blue}
\hypersetup{citecolor=blue}
\hypersetup{urlcolor=blue}

\usepackage{mathtools}

\makeatletter
\@addtoreset{equation}{section}
\makeatother

\numberwithin{equation}{section}
\newtheorem{theorem}[equation]{Theorem}
\newtheorem*{corollary*}{Corollary}
\newtheorem{proposition}[equation]{Proposition}
\newtheorem{lemma}[equation]{Lemma}
\newtheorem{corollary}[equation]{Corollary}

\newtheorem{maintheorem}{Theorem}

\theoremstyle{definition}
\newtheorem{rmk}[equation]{Remark}
\newenvironment{remark}[1][]{\begin{rmk}[#1] \pushQED{\qed}}{\popQED \end{rmk}}
\newtheorem{eg}[equation]{Example}
\newenvironment{example}[1][]{\begin{eg}[#1] \pushQED{\qed}}{\popQED \end{eg}}
\newtheorem{defnaux}[equation]{Definition}
\newenvironment{definition}[1][]{\begin{defnaux}[#1]\pushQED{\qed}}{\popQED \end{defnaux}}

\newcommand{\bA}{\mathbf{A}}

\newcommand{\bN}{\mathbf{N}}

\newcommand{\cO}{\mathcal{O}}

\newcommand{\bS}{\mathbf{S}}

\newcommand{\bV}{\mathbf{V}}

\newcommand{\fa}{\mathfrak{a}}

\newcommand{\fb}{\mathfrak{b}}

\newcommand{\fc}{\mathfrak{c}}

\newcommand{\fm}{\mathfrak{m}}

\newcommand{\fn}{\mathfrak{n}}

\newcommand{\fp}{\mathfrak{p}}

\newcommand{\fq}{\mathfrak{q}}

\newcommand{\arxiv}[1]{\href{http://arxiv.org/abs/#1}{{\tiny\tt arXiv:#1}}}
\newcommand{\stacks}[1]{\cite[\href{http://stacks.math.columbia.edu/tag/#1}{Tag~#1}]{stacks}}
\newcommand{\DOI}[1]{\href{http://doi.org/#1}{\color{purple}{\tiny\tt DOI:#1}}}

\let\ul\underline
\let\ol\overline
\let\lbb\llbracket
\let\rbb\rrbracket
\renewcommand{\phi}{\varphi}

\DeclareMathOperator{\im}{im}

\DeclareMathOperator{\Spec}{Spec}
\DeclareMathOperator{\Spf}{Spf}

\DeclareMathOperator{\Sym}{Sym}
\DeclareMathOperator{\gr}{gr}
\DeclareMathOperator{\rank}{rank}
\DeclareMathOperator{\Tor}{Tor}
\DeclareMathOperator{\Sing}{Sing}
\newcommand{\id}{\mathrm{id}}
\renewcommand{\Vec}{\mathrm{Vec}}
\newcommand{\GL}{\mathbf{GL}}
\newcommand{\defn}[1]{\emph{#1}}
\newcommand{\sing}{\mathrm{sing}}
\newcommand{\ns}{\mathrm{nsing}}
\newcommand{\elem}{\mathrm{elem}}
\newcommand{\core}{\mathrm{core}}
\newcommand{\ulambda}{\ul{\lambda}}

\DeclareMathOperator{\embcodim}{embcodim}

\title{The singular locus of a $\GL$-variety}

\author{Christopher Heng Chiu}
\address{Mathematical Institute, University of Bern, Switzerland}
\email{\href{mailto:christopher.chiu@unibe.ch}{christopher.chiu@unibe.ch}}
\urladdr{\url{https://sites.google.com/view/christopher-heng-chiu/}}

\author{Alessandro Danelon}
\address{Department of Mathematics, University of Michigan, Ann Arbor, MI, USA}
\email{\href{mailto:adanelon@umich.edu}{adanelon@umich.edu}}
\urladdr{\url{https://public.websites.umich.edu/~adanelon/}}

\author{Andrew Snowden}
\address{Department of Mathematics, University of Michigan, Ann Arbor, MI, USA}
\email{\href{mailto:asnowden@umich.edu}{asnowden@umich.edu}}
\urladdr{\url{http://www-personal.umich.edu/~asnowden/}}

\thanks{AS was supported by NSF grant DMS-2301871. CC was supported by SNSF postdoc fellowship 217058.}

\date{May 28, 2026}

\begin{document}

\begin{abstract}
A $\GL$-variety is a (typically) infinite dimensional variety $X$ equipped with an action of the infinite general linear group. In recent work of the first two authors with Draisma, a candidate definition for the singular locus of $X$ was put forth. The approach there made use of auxiliary finite dimensional varieties associated to $X$. In this paper, we give a number of characterizations of the singular locus that are intrinsic to $X$. This work shows that the candidate definition is clearly correct, and helps clarify the geometric meaning of singular points.
\end{abstract}

\maketitle
\tableofcontents

\section{Introduction}

In the last several years, the theory of $\GL$-varieties has been developed \cite{polygeom, imgclosure, unirat,  charp, CDDEF, draisma, Danelon, tcares} and applied to problems in commutative algebra \cite{DLL, stillman}, geometric complexity theory \cite{imgclosure}, and the model theory of tensors \cite{homoten, homoten2}. Recently, \cite{CDD} proposed a definition of the singular locus of a $\GL$-variety. In this paper, we take a closer look at the proposed definition, and see that it has many of the characteristics one would like. We also prove a theorem about the flat locus of a module, which we expect to be generally useful.

\subsection{$\GL$-varieties}

We review the theory of $\GL$-varieties in \S \ref{s:gl}, but make a few brief comments here to set the stage for our results. We work over a field $k$ of characteristic~0. A \defn{$\GL$-algebra} is a commutative algebra in the category of polynomial representations of the infinite general linear group $\GL$, and a \defn{$\GL$-variety} is the spectrum of reduced finitely $\GL$-generated $\GL$-algebra. Most $\GL$-varieties of interest are infinite dimensional.

Polynomial representations are equivalent to polynomial functors. Thus if $A$ is a $\GL$-algebra then we can regard it as a polynomial functor and evaluate on a finite dimensional vector $V$ to obtain an algebra $A\{V\}$. Similarly, if $X=\Spec(A)$ is a $\GL$-variety then we put $X\{V\}=\Spec(A\{V\})$, which is an ordinary finite dimensional variety. The functorial assignment $V \mapsto X\{V\}$ is called a \defn{$\Vec$-variety} in the literature (see, e.g., \cite{CDD}). The categories of $\GL$-varieties and $\Vec$-varieties are equivalent, so there is no loss of information in passing between the two, at least in principle.

\subsection{The singular locus}

The notion of singularity is central to algebraic geometry. One would therefore like to have a notion of what it means for a point on a $\GL$-variety $X$ to be singular or non-singular. Unfortunately, the most elementary approach, using the rank of the Jacobian matrix, does not really make sense in the infinite dimensional setting.

In \cite{CDD}, a solution to this problem was devised by leveraging the associated finite dimensional varieties $X\{k^n\}$. Namely, the main theorem of that paper states that there is a unique closed $\GL$-subvariety $X_{\sing}$ such that $X_{\sing}\{k^n\}$ is the singular locus of $X\{k^n\}$ for all $n \gg 0$. We call $X_{\sing}$ the \defn{singular locus} of $X$, and refer to its points as \defn{singular}. Let us examine the simplest example.

\begin{example} \label{ex:quad}
Let $\bV$ be the standard representation of $\GL$, let $X$ be the space of all symmetric bilinear forms on $\bV$, and let $X_{\le r}$ be the rank $\le r$ locus in $X$. Then $X$ is a $\GL$-variety, and each $X_{\le r}$ is a closed $\GL$-subvariety. The space $X\{k^n\}$ is the space of symmetric bilinear forms on $k^n$, and $X_{\le r}\{k^n\}$ is the rank $\le r$ locus. It is well-known that the singular locus of $X_{\le r}\{k^n\}$ is $X_{\le r-1}\{k^n\}$ for $n>r$, and so $(X_{\le r})_{\sing}$ is $X_{\le r-1}$.
\end{example}

While the approach to the singular locus in \cite{CDD} is sensible, it raises some obvious questions. We mention two.

(1) A $\GL$-variety is a scheme with an action of $\GL$. To see the $X\{k^n\}$ spaces, one must use the action of $\GL$. Thus the definition of $X_{\sing}$ depends not just on the underlying scheme $X$, but also on the action. A natural question is if one can detect singular points intrinsically from the scheme. More specifically, one can ask which, if any, of the classical tests for singularity (such as formal smoothness, or local freeness of differentials) apply in the $\GL$-variety case.

(2) The $\Vec$-variety only sees the ``finite'' points of a $\GL$-variety. For example, the $X$ in Example~\ref{ex:quad} has points of infinite rank, but every point in $X\{k^n\}$ has finite rank. Thus, with the present approach, the geometric significance of singular is quite unclear for ``infinite'' points. We note that these infinite points are the most interesting ones in the model-theoretic applications \cite{homoten, homoten2}, so it is important to understand them well.

\subsection{Main results}

The following is our main theorem. It provides some answers to the questions discussed above, and, in turn, some justification for the definition of $X_{\sing}$. We emphasize that, for finite-dimensional varieties over a field of characteristic $0$, (b), (c), (d), and (g) are equivalent to the usual property of being regular. In this way, Theorem~\ref{mainthm} shows that $\GL$-varieties do not exhibit pathologies that appear in the infinite-dimensional setting more generally (see \S \ref{ss:related} below).

\begin{maintheorem} \label{mainthm}
Let $X$ be a $\GL$-variety and let $x$ be a point of $X$. The following conditions are equivalent:
\begin{enumerate}
\item The point $x$ is non-singular, i.e., $x \not\in X_{\sing}$.
\item The local ring $\cO_{X,x}$ is regular (Definition~\ref{defn:regular}): the tangent space and tangent cone at $x$ coincide.
\item The jet lifting property holds at $x$ (Definition~\ref{defn:jlp}): every $m$-jet at $x$ lifts to an $(m+1)$-jet, for every $m \ge 0$.
\item The local ring $\cO_{X,x}$ is a formally smooth $k$-algebra (Definition~\ref{defn:fsm}).
\item The local ring $\cO_{X,x}$ is a localization of a ring of the form $A[t_i]_{i \in I}$, where $A$ is a smooth (and thus finitely generated) $k$-algebra and $I$ is an index set.
\item There is non-singular open elementary neighborhood of $x$ (see \S \ref{ss:shift}).
\item The module $\Omega_X$ of K\"ahler differentials is flat at $x$.
\end{enumerate}
\end{maintheorem}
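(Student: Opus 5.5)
The plan is to pass everything through the shift theorem for $\GL$-varieties (the structure theory recalled in \S\ref{ss:shift}). For any point $x$ there is an open elementary neighborhood: after shifting (replacing $X$ by $\mathrm{Sh}_\lambda X$, which changes neither the underlying scheme nor the local ring at $x$), some open affine $U \ni x$ has the form $U \cong B \times \bA^{\infty}$. Here $B$ is a finite-dimensional affine variety and $\bA^\infty = \Spec k[t_i]_{i \in I}$, so $\cO(U) = \cO(B)[t_i]_{i\in I}$. Let $b$ be the image of $x$ in $B$. The key preliminary step, which I expect to follow from the main result of \cite{CDD} together with the fact that $X_{\sing}\{k^n\}$ is the singular locus of $X\{k^n\}$ for large $n$, is that $X_{\sing} \cap U = B_{\sing} \times \bA^\infty$. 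Indeed, $U\{k^n\}$ is $B \times (\text{affine space})$ up to the shift, and the singular locus of a product with affine space is the pullback of $B_{\sing}$. Granting this, (a) holds iff $b$ is a regular point of $B$. This gives (a)$\Leftrightarrow$(f) at once: if $b \notin B_{\sing}$, shrink $B$ to its smooth locus to get a non-singular elementary neighborhood, and conversely.

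Next I prove that non-singularity implies every other condition. If $b$ is smooth on $B$, then $\cO_{X,x}$ is a localization of $A[t_i]$ with $A = \cO(B')$ smooth, $B'\subset B$ a smooth open. This is (e). From (e) the remaining conditions are formal. A polynomial ring over a smooth algebra is formally smooth, and localization preserves formal smoothness, which gives (d). Formal smoothness gives the jet lifting property (c), since jets are maps from $k[t]/t^{m+1}$ lifted along square-zero extensions. The differentials $\Omega_{A[t_i]} = (\Omega_A \otimes A[t_i]) \oplus \bigoplus_i A[t_i]\,dt_i$ are free after localization, which gives (g). For regularity (b), the associated graded of $\cO_{X,x}$ with respect to its maximal ideal is a polynomial ring, since the localization is at a prime of a polynomial ring over a regular ring. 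So the tangent cone equals the tangent space.

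For the converses, (d)$\Rightarrow$(c) is standard, so it suffices to show that each of (b), (c), (g) forces $b$ to be smooth on $B$. Here I use the product structure $\cO_{X,x}$ = localization of $\cO_{B,b}[t_i]$. This is a flat, faithfully flat extension, and it splits off the $t$-variables in the sense that there is a retraction to $\cO_{B,b}$ after modding out $t_i - t_i(x)$ (at least when $x$ is "generic" in the fiber). For (c), jets at $b$ in $B$ lift to jets at $x$ (take constant $t$-coordinates), the lifted jets extend by hypothesis, and projecting back extends the original jet. The jet lifting property for the finite-dimensional $B$ at $b$ then implies regularity in characteristic $0$. For (b), regularity of $\cO_{X,x}$ means its associated graded ring is a polynomial ring. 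Because the extension is a polynomial extension localized, $\gr$ of $\cO_{X,x}$ is $\gr\cO_{B,b}$ tensored with a polynomial ring (possibly after adjusting for the residue field of $x$). Hence $\gr \cO_{B,b}$ is polynomial and $b$ is regular. For (g), $\Omega_X$ restricted to $U$ is $p^*\Omega_B \oplus$ free. A direct summand of a flat module is flat, and flatness descends along the faithfully flat map $\cO_{B,b}\to\cO_{X,x}$, so $\Omega_{B,b}$ is flat, hence free, and $b$ is smooth by the Jacobian criterion.

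The main obstacle is that the point $x$ need not be a closed point and need not lie over a closed point of $B$. Its residue field can be huge, and the $t_i$ may be algebraically related to $B$'s coordinates at $x$ in an infinite way. This makes the comparison steps in (b) and (c) delicate: the tangent space of a non-closed point of $B\times\bA^\infty$ mixes the two factors. My first attempt is to reduce to the generic behavior along the fiber. I would use the fact that $\cO_{B,b}\to\cO_{X,x}$ is a localization of a polynomial extension, so it is flat with regular fibers (fiber rings are localizations of polynomial rings over fields). Then I would apply the standard ascent/descent results: regularity descends along flat local maps, and formal smoothness is compatible with flat maps that have formally smooth fibers. These hold for infinitely many variables by writing everything as a filtered colimit over finite subsets of $I$. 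The flatness statement (g) is where the paper's theorem on the flat locus of a module will be needed, to ensure the flat locus is open and $\GL$-stable, so that it can be compared with the shift.
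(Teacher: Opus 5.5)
There is a genuine gap, and it is in your first sentence. The shift theorem (Theorem~\ref{thm:shift}) gives a single non-zero $G(n)$-invariant $h$ with $X[1/h]$ elementary, i.e.\ a \emph{dense} open elementary subset. It does not say that every point has an elementary neighborhood. The points with no such neighborhood form the core $X_{\core}$ (Proposition~\ref{prop:core}), and the core is usually non-empty. For instance, take the origin of $X_{\le 1}$ in Example~\ref{ex:quad}. Its ideal is generated by the $2\times 2$ minors, which are quadrics, so its Zariski cotangent space is that of the whole ambient space of symmetric forms. Its tangent cone, however, is $X_{\le 1}$ itself, of infinite codimension. Elementary points have finite embedding codimension \cite[Proposition~7.6]{CdFD}, so the origin is a core point.

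Your argument therefore proves the theorem only at elementary points, where it is essentially fine. There $X_{\sing}\cap U=B_{\sing}\times\bA(V)$ is correct for a $G(n)$-stable elementary $U$. The non-closed-point issue you flag as the main obstacle is a minor technicality; for jets, for example, Proposition~\ref{prop:jlp-field} handles it. But the real content of the theorem lies at core points. There you must show that $x$ is singular and fails each of (b)--(g), and your proposal says nothing about this.

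Concretely, here is what is missing.

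Your claim ``(a)$\Leftrightarrow$(f) at once'' hides the statement $X_{\core}\subseteq X_{\sing}$ (Corollary~\ref{thm:core-sing}). The paper obtains it by combining two results:
\begin{itemize}
\item Lemma~\ref{lem:colimit-regular}: a non-singular point has $\cO_{X,x}=\varinjlim \cO_{X\{k^n\},\pi_n(x)}$ regular.
\item Theorem~\ref{thm:core-embcodim}: its hard half says core points have infinite embedding codimension. This is proved by induction on magnitude, using the non-cylindricity input of Lemmas~\ref{lem:core-sing-1} and~\ref{lem:core-sing-2} together with a dimension count fed into \cite[Proposition~6.6]{CdFD}.
\end{itemize}

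For (b) or (c)$\Rightarrow$(a) you need an argument that also works at core points. The paper proves Proposition~\ref{prop:mainthm-3}. It first shows $X\to X\{k^n\}$ is submersive at $\GL$-invariant points, via Proposition~\ref{prop:fg} and generic freeness. It then propagates along generalized orbits via $\GL$-Chevalley on jet schemes. Alternatively, once Theorem~\ref{thm:core-embcodim} is known, regularity means embedding codimension $0$, so $x$ is elementary and your product argument applies.

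For (g)$\Rightarrow$(a) you must show that $\Omega_X$ is not flat at core points. The paper gets this from Theorem~\ref{thm:flat}(c), which identifies the non-flat locus $Z$ of $\Omega_X$ through $Z\{k^n\}=\Sing(X\{k^n\})$ for $n\gg 0$. Your stated reason for invoking the flat-locus theorem, namely comparison with the shift, does not address this.
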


We note that (b)--(e) and (g) only depend on the underlying scheme and not the $\GL$-action. To prove the equivalence of (g) with the other conditions, we establish a general theorem about the flat locus of a module, which we expect to be generally useful in the study of $\GL$-varieties. This is our result:

\begin{maintheorem} \label{mainthm3}
Let $A$ be a reduced $\GL$-algebra that is finitely $\GL$-generated, let $X=\Spec(A)$, and let $M$ be a finitely $\GL$-generated $A$-module. Then:
\begin{enumerate}
\item The locus $U$ in $X$ where $M$ is flat is an open dense $\GL$-stable subset.
\item The flat locus is compatible with evaluation in the following sense. Let $Z=X \setminus U$. Then, for $n$ sufficiently large, $X\{k^n\} \setminus Z\{k^n\}$ is the locus where $M\{k^n\}$ is flat.
\end{enumerate}
\end{maintheorem}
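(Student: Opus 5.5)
We will use the shift theorem for $\GL$-varieties, recalled in \S\ref{ss:shift}. For a reduced finitely $\GL$-generated $A$ and a nonzero $\GL$-invariant-ish function, there is a nonzero $h \in A$ and a finite dimensional space $V_0$ such that the shifted localization $\mathrm{Sh}_{V_0}(A)[1/h]$ has the form $B \otimes \Sym(E)$. Here $B$ is a finitely generated (smooth, after further localizing) $k$-algebra and $E$ is a polynomial representation. The translates under $\GL$ of these elementary open sets cover an open dense subset of $X$.

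The plan is to first prove a \emph{generic freeness} statement for $M$. After shifting and localizing further, $M$ becomes a free module of the form $M' \otimes_B (B \otimes \Sym(E))$ with $M'$ free over $B$, or more generally $M \cong (B\otimes\Sym(E)) \otimes F$ with $F$ a free polynomial-representation module. To get this, I would use noetherianity of finitely generated modules over $B\otimes \Sym(E)$ in the $\GL$-sense. Then I would argue by noetherian induction, using the filtration of $M$ by polynomial degree with $\gr$ having generic structure, together with the fact that torsion-free modules over $\Sym(E)$ with $E$ of positive degree become free after shift and localization. This step, a $\GL$-equivariant generic freeness, is the main obstacle. I would first try to reduce to a module generated in a single degree, and then apply the shift theorem to the support/annihilator data, as in \cite{CDD}.

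Granting generic freeness, part (a) follows. The flat locus $U$ is $\GL$-stable because $\GL$ acts by automorphisms preserving $M$. $U$ contains the dense open set produced above, together with its $\GL$-translates. To see that $U$ is open, run the same argument on each irreducible closed $\GL$-subvariety $Z$ of the complement, applying it to $M/IM$ and $\Tor$-modules restricted to $Z$. Flatness at $x$ is tested by the vanishing of $\Tor_1^A(M,\kappa(x))$, and the relevant $\Tor$ modules are again finitely $\GL$-generated by noetherianity. Noetherian induction on closed $\GL$-subsets, which is valid by Draisma's theorem, then shows that the non-flat locus is a closed $\GL$-subset $Z$.

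For part (b), evaluate the structure on $k^n$. The decompositions $\mathrm{Sh}_{V_0}(A)[1/h] \cong B\otimes\Sym(E)$ and the freeness of $M$ are functorial in $V$, so on $X\{k^n\}$ they give flatness of $M\{k^n\}$ on the corresponding open set, once $n \ge \dim V_0$. Over the finitely many strata produced by the noetherian induction, the non-flatness witnessed by nonvanishing $\Tor_1$ at points of $Z$ also specializes. It is nonzero on $Z\{k^n\}$ for $n\gg0$, because a nonzero polynomial functor is nonzero on $k^n$ for large $n$ and $\Tor$ commutes with evaluation, since evaluation is exact. Taking $n$ larger than all the finitely many thresholds gives the equality of flat loci.
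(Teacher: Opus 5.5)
There is a genuine gap, and it lies in the openness of $U$ and in part (b). Your treatment of $\GL$-stability and density of $U$ is fine, provided you cite generic freeness (Theorem~\ref{thm:genfree}, applied componentwise) rather than reprove it.

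\textbf{Openness.} Your argument twice relies on noetherianity of finitely $\GL$-generated modules: once to build generic freeness, and once to make the relevant $\Tor$ modules finitely $\GL$-generated. That is not known in general. Draisma's theorem gives only topological noetherianity of $\GL$-stable closed sets, and the kernel of a presentation $A\otimes V\to M$ is not known to be finitely $\GL$-generated.

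You also test flatness at $x$ by $\Tor_1^A(M,\kappa(x))=0$. Without finite presentation or noetherian hypotheses this criterion is false. Over a rank-one valuation ring with $\fm=\fm^2$, one has $\Tor_1(\kappa,\kappa)=\fm/\fm^2=0$, yet $\kappa$ is not flat.

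Finally, noetherian induction plus generic freeness of $M/IM$ on strata only describes $M$ restricted to those strata. It never shows that $M$ is flat over $A$ on a neighbourhood of a flat point, which is what openness requires.

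The paper supplies this spreading-out step in two stages.
\begin{enumerate}
\item It reduces to $\GL$-stable primes (Lemma~\ref{lem:flat2}). Flatness at $\fp$ passes to the $\GL$-invariant generization $\fq\subset\fp$ of its generalized orbit, and any $\GL$-stable open set containing $\fq$ contains $\fp$.
\item At a $\GL$-stable $\fp$ (Lemma~\ref{lem:flat1}), after shifting and localizing, it lifts a $G(n)$-equivariant basis of $M/\fp M$ over $A/\fp$ to a map $F=A\otimes V\to M$. Surjectivity at $\fp$ comes from the equivariant Nakayama lemma (Proposition~\ref{prop:nak1}). Flatness then gives $K_\fp=\fp K_\fp$ for the kernel $K$. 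Since $K$ need not be finitely $\GL$-generated, the paper gets $K_\fp=0$ from the $\GL$-Krull intersection theorem (Theorem~\ref{thm:krull}) together with $K\subset F$, and then $K[1/f]=0$ for some $f\notin\fp$ from reducedness (Proposition~\ref{prop:nak3}).
\end{enumerate}

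\textbf{Part (b).} Let $W_n$ be the non-flat locus of $M\{k^n\}$.

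For $W_n\subset Z\{k^n\}$ you only treat the generic free open set. The paper handles every point of $X\{k^n\}\setminus Z\{k^n\}$: choose $h\in A\{k^n\}$ with $X\{k^n\}[1/h]$ disjoint from $Z\{k^n\}$, note that $X[1/h]\subset U$, and take $G(n)$-invariants. This preserves flatness because invariants commute with $\Tor$ (Lemma~\ref{lem:flat-invar}).

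For the reverse inclusion your argument has three problems:
\begin{itemize}
\item $\kappa(x)$ is not a polynomial representation, so ``$\Tor$ commutes with evaluation'' does not apply to it.
\item Nonvanishing of a polynomial functor on $k^n$ says nothing about nonvanishing at the generic points of $Z\{k^n\}$.
\item The step ``non-flat $\Rightarrow$ $\Tor_1\neq 0$'' is the same unproved local criterion.
\end{itemize}
The paper argues more simply. Take one of the finitely many $\GL$-stable generic points $\fp$ of the components of $Z$. Then $M_\fp$ is the filtered colimit of the localizations of $M\{k^n\}$ at $\fp\cap A\{k^n\}$, over the corresponding colimit of local rings. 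If infinitely many of these were flat, $M_\fp$ would be flat. Hence these points lie in $W_n$ for $n\gg0$, and closedness of $W_n$ gives $Z\{k^n\}\subset W_n$.
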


Applying this theorem to the module of differentials gives the equivalence of (a) and (g) in Theorem~\ref{mainthm}. In fact, this gives a new proof of the main result of \cite{CDD} (see Remark~\ref{rmk:newproof}).

In the course of proving Theorem~\ref{mainthm}, we introduce the notion of the core of a $\GL$-variety $X$. The shift theorem of \cite{polygeom} shows that $X$ has a dense open elementary subvariety (see \S \ref{ss:shift} for the definition). We define the \defn{core} of $X$ to be the locus of points that do not belong to any open elementary subvariety; these are essentially the points where the local structure of $X$ is complicated. The core is a closed $\GL$-subvariety of $X$. We give an alternative characterization of the core using the notion of embedding codimension of \cite{CdFD}; this is an important step in our proof of Theorem~\ref{mainthm}:

\begin{maintheorem} \label{mainthm4}
Let $X$ be a $\GL$-variety. Then $x \in X$ belongs to the core of $X$ if and only if the embedding codimension of $\cO_{X,x}$ is infinite, that is, the tangent cone at $x$ has infinite codimension in the tangent space at $x$.
\end{maintheorem}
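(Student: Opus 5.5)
We prove the two directions separately, using the shift theorem of \cite{polygeom} and the structure of open elementary subvarieties. Recall that an elementary $\GL$-variety has the form $B \times \bA^{\ulambda}$, where $B$ is a finite dimensional variety and $\bA^{\ulambda}$ is the affine space attached to a pure polynomial representation. An open elementary subvariety of $X$ is an open $U \subset X$ (possibly after a shift) that is isomorphic to such a product.

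\emph{Points outside the core have finite embedding codimension.} Suppose $x \notin \core(X)$, so $x$ lies in an open elementary subvariety $U \cong B \times \bA^{\ulambda}$. Write $x=(b,a)$. Then $\cO_{X,x}$ is a localization of $\cO_B[t_i]_{i\in I}$ at a prime. The polynomial variables contribute equally to the tangent space and to the tangent cone: the associated graded ring is $\gr \cO_{B,b}$ tensored with a polynomial ring, compatibly with the tangent space decomposition. Hence the embedding codimension of $\cO_{X,x}$ equals that of $\cO_{B,b}$, computed after base change to a residue field if $x$ is not closed. This is finite, since $B$ is finite dimensional. We will need the fact from \cite{CdFD} that embedding codimension is invariant under adjoining variables and under localization (flat base change of this type); I would verify it directly in this case.

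\emph{Core points have infinite embedding codimension.} This is the harder direction and the main obstacle. Suppose $x \in \core(X)$. I would argue by noetherian induction on $X$ using $\GL$-noetherianity. Let $Y$ be the closure of the $\GL$-orbit of $x$, or more usefully the irreducible component structure near $x$.

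The key step is to extract an infinite supply of independent ``obstructions.'' Say $A$ is generated by finitely many elements, and write $X\subset \bA^{\ulambda}$ as a closed subvariety cut out by a $\GL$-ideal $\fa$. The embedding codimension at $x$ measures how many initial forms of elements of $\fa$ at $x$ of order $\ge 2$ are independent modulo lower-order data; roughly, it is the number of equations needed beyond the linear ones. The proof of the shift theorem shows that $X$ becomes elementary near $x$ precisely when some equation $f\in\fa$ has a partial derivative nonvanishing at $x$ that can be used to eliminate a variable, after which one inducts.

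So suppose $x$ is in the core. Then the shifting procedure fails at $x$. I would show that there is then an equation $f$ whose initial form at $x$ has degree $\ge 2$ and is not in the ideal generated by lower-order terms. Applying $\GL$ — specifically, translating $f$ by elements of $\GL_\infty$ that fix $x$ up to the shift, or substituting disjoint blocks of coordinates, which is possible since $x$ is only ``finitely supported'' in an appropriate sense after shifting — yields infinitely many such initial forms in disjoint sets of variables. These are therefore independent, giving infinite embedding codimension.

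The delicate points are the following. First, one must make precise that the stabilizer of $x$ inside a shifted copy of $\GL$ is large enough to produce the disjoint copies. For non-closed or ``infinite'' points this requires passing to a field extension and using the shift at $x$ as in \cite{polygeom}. Second, one must show that disjoint-variable initial forms really contribute independently to $\embcodim$. I would handle the second point with a direct regular-sequence/Hilbert-function argument on the associated graded ring.
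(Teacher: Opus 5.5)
Your first direction (a point of an open elementary subvariety has finite embedding codimension, via the product structure and \cite[Proposition 7.6]{CdFD}) is what the paper does. The second direction has a genuine gap. The step that produces infinitely many independent obstructions is asserted rather than proved, and the mechanism you propose is not available in general. A single nonlinear initial form only shows that $\cO_{X,x}$ is not regular; infinitude is the whole content.

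\emph{Translates do not exist in general.} Producing copies by translating by elements of $\GL$ that stabilize $x$ fails because core points can have trivial stabilizer. Take $X=X_{\le 1}\times\bA(\Sym^2\bV)$, in the notation of Example~\ref{ex:quad}. Its core is $\{0\}\times\bA(\Sym^2\bV)$, and $(0,Q)$ with $Q$ a suitably general infinite symmetric matrix is fixed by no non-identity element of $\GL$. Shifting never makes such a point ``finitely supported.''

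\emph{Translates need not be independent.} Even where translates exist, they must be nonzero and in disjoint variables inside $\Sym(\fm_x/\fm_x^2)$. For that you need to know which ambient coordinates survive in the Zariski cotangent space at $x$, i.e.\ to control the linear parts of the equations at $x$. The proposal does not address this.

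\emph{The induction is set up wrongly.} Noetherian induction on $X$ does not fit the shift-and-localize steps, since $X[1/h]$ as a $G(n)$-variety is not a closed subvariety of $X$. The paper instead inducts on the magnitude of $V$, where $X\subset\bA(V)$.

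The missing idea is a dimension count rather than explicit obstructions. Let $d$ be the top degree of $V$. There are three cases.
\begin{itemize}
\item $X$ is cylindrical for some $V=V'\oplus R$. Then drop $R$; embedding codimension and elementarity are unchanged.
\item Some $h=\partial_v f$, with $f$ vanishing on $X$ and $v\in V_d$, is nonzero at $x$. Then $X[1/h]$ embeds into some $\bA(W)$ of smaller magnitude by Proposition~\ref{prop:embedding2}, and induction applies.
\item $x$ lies in the locus $Z$ of Lemma~\ref{lem:core-sing-1}.
\end{itemize}
In the last case, the vanishing of all these partials at $x$ means every $V_d$-direction survives in $\fm_x/\fm_x^2$. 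Write $x_n$ and $x'_n$ for the images of $x$ in $X\{k^n\}$ and $\bA(V_d)\{k^n\}$. Then the cotangent map of $\cO_{X\{k^n\},x_n}\to\cO_{X,x}$ has rank at least $\dim\cO_{\bA(V_d)\{k^n\},x'_n}$ for every $n$. Meanwhile, $X$ generically embeds into something of smaller magnitude (Lemma~\ref{lem:core-sing-2}). Hence $\dim\bA(V_d)\{k^n\}-\dim X\{k^n\}$ grows like a polynomial of degree $d$ in $n$. Now \cite[Proposition 6.6]{CdFD} gives $\embcodim(\cO_{X,x})\ge\dim\cO_{\bA(V_d)\{k^n\},x'_n}-\dim\cO_{X\{k^n\},x_n}$, which is unbounded in $n$. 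This treats every point of $Z$ uniformly, with no stabilizer or disjoint-support argument needed.
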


The theorem shows that the core is intimately related to the study of singularities. It also implies that the core can be recovered directly from $X$ as a scheme, without use of the $\GL$-action. We believe this could be a first step towards a finer understanding of the local geometry of $\GL$-varieties at singular points.

\subsection{Related work}
\label{ss:related}

Singularities of infinite-dimensional schemes have been studied as part of \cite{CdFD} and \cite{CdFD2} in the context of arc spaces of algebraic varieties. 
One of the main results in \cite{CdFD} characterizes those arcs for which a formal-local decomposition \`a la Drinfeld exists (see \cite{Chi}) in terms of the embedding codimension of their local rings being finite. In contrast, Theorem~\ref{mainthm4} shows that finiteness of embedding codimension forces the local structure of a $\GL$-variety to be much more rigid: in our setting, a similar decomposition is already necessarily Zariski-local.

Of note is that singularities of arc spaces seem to exhibit more pathological behavior than those of $\GL$-varieties. For example, any arc contained in a smooth formal branch is regular \cite{BS}. However, such arcs are in general only formally smooth if their center is a smooth point (i.e., there is only one formal branch); and similarly the Krull intersection theorem only holds then \cite{BH}. We give a detailed explanation in Example~\ref{ex:not-form-smooth-2} here. 

\subsection{Notation}

We list some important notation:
\begin{description}[align=right,labelwidth=1.9cm,leftmargin=!]
\item[ $k$ ] the coefficient field (characteristic~0) 
\item[ $\GL$ ] the infinite general linear group (\S \ref{ss:polyrep})
\item[ $G(n)$ ] a block subgroup of $\GL$ (\S \ref{ss:polyrep})
\item[ $\cO_{X,x}$ ] the local ring of a scheme $X$ at a point $x$
\item[ $\kappa(x)$ ] the residue field of $\cO_{X,x}$
\item[ $\Omega_X$ ] the K\"ahler differentials of $X$ over $k$
\item[ $\bA(V)$ ] the affine space associated to the polynomial representation $V$ (\S \ref{ss:glvar})
\item[ $X\{V\}$ ] the evaluation of a $\GL$-variety $X$ on a vector space $V$ (\S \ref{ss:glvar})
\end{description}

\subsection*{Acknowledgments}

We thank Jan Draisma and Mircea Mustata for helpful discussions. We are also grateful to Alapan Mukhopadhyay for pointing us towards Example~\ref{ex:not-form-smooth-3}.

\section{Background on $\GL$-varieties} \label{s:gl}

In this section, we review some background material on polynomial representations, $\GL$-varieties, and related topics. We refer to \cite{polygeom} for additional details.

\subsection{Polynomial representations} \label{ss:polyrep}

Fix a field $k$ of characteristic~0. Let $\GL$ be the union of the groups $\GL_n(k)$, with respect to the standard inclusions. Let $\bV=\bigcup_{n \ge 1} k^n$ be the standard representation of $\GL$, with basis $\{e_i\}_{i \ge 1}$. A representation of $\GL$ is \defn{polynomial} if it is a subquotient of a direct sum of tensor powers of $\bV$. The category of such representations is semi-simple, with the simple objects being the Schur functors $\bS_{\lambda}(\bV)$, where $\lambda$ is a partition. It is also closed under the tensor product. Polynomial representations are naturally graded, with $\bS_{\lambda}(\bV)$ in degree $\vert \lambda \vert$. In particular, the degree~0 piece of a polynomial representation coincides with its $\GL$-invariants.

The category of polynomial representations of $\GL$ is equivalent to the category of polynomial functors, with the representation $\bS_{\lambda}(\bV)$ corresponding to the functor $\bS_{\lambda}$. Given a polynomial representation $A$ and a vector space $V$, we write $A\{V\}$ for the value of the functor corresponding to $A$ on $V$. We will typically apply this when $V=k^n$. For $V$ fixed, the functor $A \mapsto A\{V\}$ is exact and compatible with tensor products.

Let $G(n)$ be the subgroup of $G$ consisting of block matrices of the form
\begin{displaymath}
\begin{pmatrix} 1 & 0 \\ 0 & \ast \end{pmatrix}
\end{displaymath}
where the top left block has size $n \times n$. The group $G(n)$ is isomorphic to $\GL$, so any definition or construction we make for $\GL$ also applies to $G(n)$. In particular, we can speak of polynomial representations of $G(n)$. If $A$ is a polynomial representation of $\GL$ then $A\{k^n\}$ is naturally identified with the invariant space $A^{G(n)}$. In particular, the functor $(-)^{G(n)}$ is exact and compatible with tensor products on the category of polynomial representations.

Suppose $A$ is a polynomial representation of $\GL$. Restricting the action to $G(n)$, we obtain a polynomial representation of $G(n)$. We can then identify $G(n)$ with $\GL$ and regard $A$ as a representation of $\GL$ again; in other words, we are simply restricting $A$ along the standard self-embedding $\GL \to \GL$ with image $G(n)$. We refer to this operation as \defn{shifting}. From the polynomial functor point of view, the shift of $A$ is the functor $V \mapsto A\{V \oplus k^n\}$. In \cite{polygeom}, the shift operation is used extensively. In this paper, we tend to simply restrict to $G(n)$, and not identify this group explicitly with $\GL$; we still refer to this as shifting.

\subsection{$\GL$-algebras}

A \defn{$\GL$-algebra} is a commutative algebra object $A$ in the category of polynomial representations. Explicitly, $A$ is an associative, commutative, and unital $k$-algebra equipped with an action of $\GL$ under which it forms a polynomial representation. We say that $A$ is \defn{finitely $\GL$-generated} if it is generated as a $k$-algebra by the $\GL$-orbits of finitely many elements; equivalently, this mean that $A$ is a quotient of $\Sym(V)$ for some finite length polynomial representation $V$.

Let $A$ be a $\GL$-algebra. By an \defn{$A$-module} we mean a module for $A$ internal to the category of polynomial representations. Suppose $M$ is an $A$-module. We say that $M$ is \defn{finitely $\GL$-generated} if it is generated as an $A$-module by the $\GL$-orbits of finitely many elements. Equivalently, this means that $M$ is a quotient of an $A$-module of the form $A \otimes V$ for some finite length polynomial representation $V$. In particular, if $\fa$ is a $\GL$-stable ideal of $A$ then it is an $A$-module in this sense, and we can speak of finite $\GL$-generation.

\subsection{$\GL$-varieties} \label{ss:glvar}

A \defn{$\GL$-variety} is a scheme $X$ of the form $\Spec(A)$, where $A$ is a reduced finitely $\GL$-generated $\GL$-algebra. 
While one could consider generalizations to allow for non-affine schemes, a good notion of regularity should be local for the Zariski topology and thus it will be sufficient to restrict to the affine case here.
For a vector space $V$, we write $X\{V\}$ for the scheme $\Spec(A\{V\})$. When $V$ is finite dimensional, $X\{V\}$ is an ordinary finite dimensional algebraic variety.

Suppose $V$ is a finite length polynomial representation of $\GL$. Then $\Sym(V)$ is a reduced and finitely $\GL$-generated $\GL$-algebra, and so $\bA(V)=\Spec(\Sym(V))$ is a $\GL$-variety. These play a role in the theory analogous to the affine spaces $\bA^n$ in classical algebraic geometry. By definition, every $\GL$-variety is a closed $\GL$-subvariety of some $\bA(V)$. The notation $\bA(V)$ was used in \cite{charp}, but in \cite{polygeom} the notation $\bA^{\ulambda}$ was employed.

Let $X=\Spec(A)$ be a $\GL$-variety. For an element $h \in A$, we write $X[1/h]$ for the open set where $h$ is non-zero, which we identify with the affine scheme $\Spec(A[1/h])$. If $h$ is invariant under $G(n)$ then $X[1/h]$ is an open $G(n)$-subvariety of $X$.

Let $X$ be a $\GL$-variety and let $x,y \in X$ be two points. We say that $x$ and $y$ belong to the same \defn{generalized $\GL$-orbit} if each belongs to the Zariski closure of the orbit of the other. This is an equivalence relation on $X$. The generalized orbit of $x$ contains a unique $\GL$-fixed point, namely, the generic point of the closure of the orbit of $x$. A locally closed $\GL$-stable set is a union of generalized orbits. See \cite[\S 3]{polygeom} for these results, and more.

\begin{remark}
Let $X$ be a $\GL$-variety. Then $V \mapsto X\{V\}$ defines a contravariant functor from the category of finite dimensional vector spaces to the category of algebraic varieties. Such a functor (with appropriate conditions) is called a \defn{$\Vec$-variety} in the literature. The notions of $\GL$-variety and $\Vec$-variety are equivalent. We note that \cite{CDD} uses the $\Vec$-variety point of view.
\end{remark}

\subsection{The singular locus}

Let $X$ be a $\GL$-variety. As explained in the introduction, the main theorem of \cite{CDD} asserts that there is a unique closed $\GL$-subvariety $X_{\sing}$ of $X$ such that $X_{\sing}(k^n)$ is the singular locus in $X\{k^n\}$ for all $n \gg 0$. We call $X_{\sing}$ the \defn{singular locus} of $X$ and we refer to its points as \defn{singular}. We let $X_{\ns}$ be the complement of $X_{\sing}$, which is a dense open $\GL$-stable subset of $X$; its points are called \defn{non-singular}.

Let $X=\Spec(A)$. Identifying $A\{k^n\}$ with the $G(n)$-invariants of $A$, we see that there is a corresponding map $\pi_n \colon X \to X\{k^n\}$ of affine schemes. Since $A$ is the union of the $A\{k^n\}$, it follows that $X$ is the inverse limit of $X\{k^n\}$; this holds both as affine schemes, and also just as sets (forgetting all the extra structure). Now suppose that $Z$ is a closed $\GL$-subvariety of $X$. It then follows that a point $x \in X$ belongs to $Z$ if and only if $\pi_n(x)$ belongs to $Z\{k^n\}$ for all $n$, or, equivalently, for all sufficiently large $n$. In particular, taking $Z=X_{\sing}$, we see that $x$ is a singular point of $X$ if and only if $\pi_n(x)$ is a singular point of $X\{k^n\}$ for all $n \gg 0$. We note that this applies to all scheme-theoretic points of $X$, not just closed points.

\subsection{Generic freeness}

We will require the following theorem on modules over $\GL$-algebras. It is proved in \cite[Theorem~3.3]{tcares}. We note that this is a module-theoretic analog of the shift theorem proved in \cite{polygeom}, and which appears in \S \ref{ss:shift} below.

\begin{theorem} \label{thm:genfree}
Let $A$ be an integral $\GL$-algebra and let $M$ be a finitely $\GL$-generated $A$-module. Then there exists $n \ge 0$, a non-zero $G(n)$-invariant element $h \in A$, and a finite length polynomial representation $V$ of $G(n)$ such that $M[1/h]$ is isomorphic to $V \otimes A[1/h]$ as a $G(n)$-equivariant $A[1/h]$-module.
\end{theorem}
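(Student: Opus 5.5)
This statement is Theorem~\ref{thm:genfree}, the generic freeness theorem of \cite[Theorem~3.3]{tcares}; the paper cites it rather than proving it. If I had to prove it, I would argue by noetherian induction on the ``size'' of $M$, modeled on the proof of the shift theorem in \cite{polygeom}.

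\emph{Step 1: reduce to a cyclic module.} Choose a finite length polynomial representation $W$ and a surjection $A \otimes W \to M$. Filter $W$ by subrepresentations so that the successive subquotients are irreducible. This induces a filtration of $M$ whose graded pieces are quotients of $A \otimes \bS_\lambda(\bV)$. Suppose each graded piece becomes free of the required form after inverting some $G(n_i)$-invariant $h_i$. Take $n = \max n_i$ and $h = \prod h_i$. Since $V\otimes A[1/h]$ is projective in the equivariant category, the filtration splits over $A[1/h]$ and $M[1/h]$ is free. Here I use that shifting preserves the form ``free on a finite length polynomial representation'' and that $(-)^{G(n)}$ is exact.

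\emph{Step 2: handle a quotient $N = (A\otimes \bS_\lambda)/K$ by induction on $\lambda$ and on $K$.} If $K=0$ we are done. Otherwise pick a nonzero element of $K$. After shifting by some $G(n)$, decompose $\bS_\lambda(\bV)$ restricted to $G(n)$ as $\bigoplus_\mu \bS_\mu(\bV)\otimes(\text{multiplicity space})$. The nonzero element then yields a relation whose leading coefficient, for a suitable ordering of the $\mu$, is a nonzero element $h$ that is invariant under a further shift. Such an $h$ exists because $A$ is a domain and the coefficient of the relevant highest-weight term is nonzero.

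After inverting $h$, this relation lets us eliminate the top summand. So $N[1/h]$ becomes a quotient of $A[1/h]\otimes W'$, where $W'$ involves only strictly smaller partitions, or fewer copies of the top partition. Partitions under this ordering, together with multiplicities, are well-founded, so we can recurse.

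The main obstacle is making this elimination precise, i.e.\ the module analogue of the shift-theorem step. After the shift, the relation expresses the top component as a combination of the others only modulo lower-degree terms, and one must check that the result remains a $G(n)$-equivariant presentation. I would handle this as in \cite{polygeom}: differentiate a relation of minimal degree with respect to a variable created by the shift, so that its leading coefficient becomes a $G(n)$-invariant, and use reducedness/integrality of $A$ to guarantee that this coefficient is nonzero.
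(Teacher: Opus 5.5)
\textbf{There is a genuine gap.} Your Step~1 is fine: filter by the generators, use that $V\otimes A[1/h]$ is projective among equivariant modules by semisimplicity, and take $h=\prod h_i$, which is nonzero because $A$ is a domain. Recursing on the degree of the generating representation is also well-founded. But the whole content of the theorem is the elimination step of Step~2, and there you assert the conclusion without proving it. Moreover, the mechanism you sketch does not work as stated.

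Decompose $A\otimes\bS_\lambda(\bV)$ under $G(n)$, and let $P$ be the projection onto the top summand $A\otimes\bS_\lambda(\bV_{>n})$. This summand is the part of degree $d=|\lambda|$ in $\bV_{>n}$.
\begin{itemize}
\item If your nonzero $y\in K$ is $G(n)$-invariant, then it lies in the bottom summand $A^{G(n)}\otimes\bS_\lambda(k^n)$. So $P(y)=0$, and there is no leading coefficient to invert.
\item If $y$ is only $G(m)$-invariant for some $m>n$, then $P(y)$ can be nonzero, but its coefficients are only $G(m)$-invariant. Inverting one of them forces you to work $G(m)$-equivariantly. Relative to $G(m)$ the top summand shrinks to $\bS_\lambda(\bV_{>m})$, and $y$ projects to zero there. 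This already happens for $\lambda=(1)$ with $y=\sum_{l\le m}a_l\otimes e_l$.
\end{itemize}
So ``a leading coefficient invariant under a further shift'' is circular. Two further problems:
\begin{itemize}
\item A single relation kills an irreducible summand only if its top component is a pure tensor $h\otimes u$, with $h$ invariant under the \emph{same} group that defines the summand.
\item ``Differentiate with respect to a variable created by the shift'' is the variety argument of \cite{polygeom}. It has no direct meaning for a submodule of $A\otimes\bS_\lambda(\bV)$ when $A$ is an arbitrary domain.
\end{itemize}
Also, integrality is not what makes $h$ nonzero. It is used only to keep products of the $h_i$ nonzero and $A[1/h]$ a domain for the recursion.

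\textbf{The missing idea.} Write $y=\sum_l a_l\otimes w_l\in K$, nonzero, with $a_l\in A^{G(n)}$ and $(w_l)$ a basis of $\bS_\lambda(k^n)$.
\begin{enumerate}
\item For indices $i_1,\dots,i_d\le n$, apply $E=E_{n+d,i_d}\cdots E_{n+1,i_1}$, where $E_{j,i}\in\mathfrak{gl}$ sends $e_i\mapsto e_j$. This keeps you in $K$, since $K$ is $\mathfrak{gl}$-stable.
\item Expand $E(a_l\otimes w_l)$ by the Leibniz rule. Every term in which some factor acts on $a_l$ has degree $<d$ in $\bV_{>n}$ in the tensor factor, so it lies in the lower summands. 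Hence $P(Ey)=\sum_l a_l\otimes Ew_l$, and its coefficients are still $G(n)$-invariant.
\item The polarization map $w\mapsto (Ew)_{(i_1,\dots,i_d)}$ is injective on $\bS_\lambda(k^n)$; check this on $(k^n)^{\otimes d}$. So $P(Ey)\neq 0$ for some choice of indices.
\item Since $\bS_\lambda(\bV_{>n})$ is absolutely irreducible, the density theorem shows that the $G(n)$-span of $P(Ey)$ contains $h\otimes\bS_\lambda(\bV_{>n})$ for some nonzero $h$ in the $k$-span of the $a_l$. Thus $h\otimes\bS_\lambda(\bV_{>n})\subseteq P(K)$.
\end{enumerate}
After inverting this $G(n)$-invariant $h$, the module $N[1/h]$ is generated by the lower summands, all of degree $<d$. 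That is exactly what your recursion needs. The paper itself does not prove this theorem; it quotes it from \cite[Theorem~3.3]{tcares}.
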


%
%

\section{Some commutative algebra}

In this section, we review a few different notions of regularity and smoothness for commutative rings. Throughout, we work over a field $k$ of characteristic~0; many results can be extended to positive characteristic by including separability hypotheses. We note this section has nothing to do with $\GL$-varieties.

\subsection{Formal smoothness}

Let $A$ be a ring and let $\fa$ be an ideal of $A$. Recall that the \defn{$\fa$-adic} topology on $A$ is the topology in which the powers of $\fa$ form a neighborhood basis of the identity. If $\fa=0$ then the $\fa$-adic topology is the discrete topology. If $A$ is equipped with the $\fa$-adic topology and $B$ is equipped with the $\fb$-adic topology then a ring homomorphism $\phi \colon A \to B$ is continuous if and only if $\phi(\fa^n) \subset \fb$ for some $n$. In particular, if $B$ has the discrete topology then $\phi$ is continuous if and only if it factors through $A/\fa^n$ for some $n$.

\begin{definition} \label{defn:tfsm}
Let $\phi \colon A \to B$ be a continuous ring homomorphism with respect to adic topologies. We say that $\phi$ is \defn{topologically formally smooth} if the following condition holds. Suppose we are given the outside commutative square in the following diagram
\begin{displaymath}
\xymatrix@C=4em{
B \ar[r]^{\beta} \ar@{..>}[rd]^-{\gamma} & R/I \\
A \ar[u]^{\phi} \ar[r]^-{\alpha} & R \ar[u] }
\end{displaymath}
Here $R$ is any ring, $I$ is a square-zero ideal of $R$, and $\alpha$ and $\beta$ are continuous ring homomorphisms, where $R$ and $R/I$ have the discrete topology. Then we require a continuous ring homomorphism $\gamma$ to exist that makes the diagram commute.
\end{definition}

\begin{definition} \label{defn:fsm}
A ring homomorphism $\phi \colon A \to B$ is \defn{formally smooth} if it is topologically formally smooth when $A$ and $B$ are given the discrete topologies.
\end{definition}

We will mostly be interested in these definitions when $B$ is a $k$-algebra and $A=k$. If $\phi \colon A \to B$ is a continuous ring homomorphism that is formally smooth then it is necessarily topologically formally smooth \stacks{07EC}. The converse is not true in general, as the following examples show.

\begin{example} \label{ex:not-form-smooth-1}
    Let $k$ be a field of characteristic $\neq 2$. For $n\geq 0$, set $A_n := k[x_n]_{(x_n)}$ and consider the map $A_n \to A_{n+1}$ given by $x_n \mapsto x^2_{n+1}$. Take $A := \varinjlim_n A_n$ to be the colimit of this direct system. Then $A$ is given by taking the quotient of the infinite-variate polynomial ring $k[x_0,x_1,\ldots]$ by the ideal generated by $x_n - x^2_{n+1}$ for all $n\geq0$ and localizing at the prime ideal $\fm$ generated by all the variables. Clearly $x_n \in \bigcap_n \fm$, and thus the $\fm$-adic completion $\widehat A$ of $A$ is isomorphic to $k$. In particular, $A$ is topologically formally smooth over $k$. On the other hand, it is easy to check that $\Omega_{A/k} \neq 0$ but $\Omega_{A/k} \otimes_A \widehat A = 0$, thus showing that $\Omega_{A/k}$ is not projective and hence $A$ not formally smooth over $k$ \stacks{031J}. Of note is that $A$ in this case is a directed colimit of smooth rings with non-smooth transition maps. We will show in Lemma~\ref{lem:colimit-regular} (together with the main result in Proposition~\ref{prop:compare}) that any such colimit is topologically formally smooth.
\end{example}

\begin{example} \label{ex:not-form-smooth-2}
    Such pathologies are common when looking at local rings of the arc space of an algebraic variety. We refer the reader to \cite{CdFD2} for details, but include a concrete example here as it is not explicitly mentioned in the literature. Let $k$ be a field of characteristic $0$ and $X = \Spec k[x,y]/(xy)$. Then the arc space $X_\infty$ is an affine subscheme of $\Spec k[x_0,x_1,\ldots,y_0,y_1,\ldots]$. Let $\alpha \in X_\infty$ be an arc of the form $(\alpha(t),0) \in k_\alpha \lbb t \rbb^2$, with $\alpha(t) = \sum \alpha_i t^i$ and $k_\alpha$ a field extension of $k$. Geometrically this means that $\alpha$ is contained in the irreducible component defined by $y=0$. Let $g = x_0 y_1 \in \mathcal{O}_{X_\infty,\alpha}$, which satisfies $g^2 = 0$. Similar to \cite[Example 5.12]{CdFD2}, one can verify that $dg \neq 0$ in $\Omega^1_{X_\infty,\alpha}$ and hence, again by \stacks{031J}, the ring $\mathcal{O}_{X_\infty,\alpha}$ is not formally smooth. To be precise, let $R \subset k_\alpha(r,s)$ be the valuation ring induced by the monomial valuation $\nu \colon k_\alpha(r,s)^* \to \mathbb{Z}^2$ given by $\nu(s) = (1,0)$, $\nu(r) = (0,1)$. Let $\widetilde R = R/(rs)$ and $\varphi \colon \mathcal{O}_{X_\infty, \alpha} \to \widetilde R$ be the map induced by
    \[
        x \mapsto r - \alpha(t),\quad y \mapsto s + \frac{s}{r} \alpha(t) + \frac{s}{r^2} \alpha^2(t) + \ldots
    \]
    Then $\varphi(g) = s \alpha_1$. Thus it suffices to show that $ds$ is not zero in $\Omega_{\widetilde R/k_\alpha}$. To that avail, take the $k_\alpha$-derivation $\partial \colon k_\alpha(r,s)$ defined by $\partial(r) = -r$ and $\partial(s) = s$. It is immediate to check that $\partial$ induces a derivation on $\widetilde R = R/(rs)$ with $\partial(s) \neq 0$, which proves the claim. 
    
    On the other hand, the ring $\mathcal{O}_{X_\infty,\alpha}$ is topologically formally smooth by \cite[Theorem 1.6]{BS} and, as in Example~\ref{ex:not-form-smooth-1}, it is not separated for the adic topology by \cite[Theorem 1.1]{BH}.
\end{example}

\begin{example} \label{ex:not-form-smooth-3}
For a simpler example of a different nature consider the ring of formal series $k\lbb t \rbb$ over a field $k$, which is separated for the $t$-adic topology. Clearly $k\lbb t \rbb$ is topologically formally smooth, but it is only formally smooth if the characteristic of $k$ is $p > 0$ and the degree of the extension $k^p/k$ is finite, see \cite[Lemma 2.1]{Tan}.
\end{example}

\subsection{Jets}

Let $A$ be a $k$-algebra. An \defn{$m$-jet} valued in a field extension $K/k$ is a $k$-algebra homomorphism $\alpha \colon A \to K[t]/(t^{m+1})$. Given an $m$-jet $\alpha$, let $\alpha_0 \colon A \to K$ be the composition of $\alpha$ with the reduction mod $t$ map. Then $\ker(\alpha_0)$ is a prime ideal $\fp$ of $A$, called the \defn{center} of $\alpha$. Geometrically, an $m$-jet is a map of $k$-schemes $\Spec(K[t]/(t^{m+1})) \to \Spec(A)$, and the center is simply the image of the unique point in the source. Observe that there is a bijection between jets of $A$ centered at $\fp$ and jets of the localization $A_{\fp}$ centered at $\fp A_{\fp}$.

\begin{definition} \label{defn:jlp}
Let $A$ be a $k$-algebra and let $\fp$ be a prime ideal of $A$. We say that $A$ has the \defn{jet lifting property} at $\fp$ if for every $m$, any $m$-jet of $A$ valued in $K/k$ and centered at $\fp$ lifts to an $(m+1)$-jet of $A$ valued in $K$ (which is necessarily centered at $\fp$).
\end{definition}

From the above comments, we see that $A$ has the jet lifting property at $\fp$ if and only if $A_{\fp}$ has the property at $\fp A_{\fp}$. The jet lifting property is essentially a special case of the property appearing in the definition of formal smoothness, and so we have the following simple implication:

\begin{proposition} \label{prop:jlp-tfsm}
Let $A$ be a $k$-algebra and let $\fp$ be a prime ideal of $A$. If $A$, equipped with the $\fp$-adic topology, is topologically formally smooth over $k$ then $A$ has the jet lifting property at $\fp$.
\end{proposition}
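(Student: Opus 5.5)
The plan is to apply Definition~\ref{defn:tfsm} directly, taking $\phi \colon k \to A$, with $k$ discrete and $A$ carrying the $\fp$-adic topology. Fix $m \ge 0$, a field extension $K/k$, and an $m$-jet $\beta \colon A \to K[t]/(t^{m+1})$ centered at $\fp$. Set
\[
R = K[t]/(t^{m+2}), \qquad I = (t^{m+1})R .
\]
Then $I^2=0$, because $2(m+1) \ge m+2$, and $R/I \cong K[t]/(t^{m+1})$. For $\alpha$ we take the structure map $k \to R$, which is continuous since $k$ is discrete. The outer square commutes because $\beta$ is a $k$-algebra homomorphism.

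The point needing care is that $\beta$ is continuous for the $\fp$-adic topology on $A$ and the discrete topology on $R/I$. That is, we must show $\beta(\fp^N)=0$ for some $N$. Because $\beta$ is centered at $\fp$, we have $\beta(\fp) \subset \ker(K[t]/(t^{m+1}) \to K) = (t)$. Hence
\[
\beta(\fp^{m+1}) \subset (t)^{m+1} = 0,
\]
and $\beta$ factors through $A/\fp^{m+1}$, so it is continuous.

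Topological formal smoothness now gives a continuous ring homomorphism $\gamma \colon A \to R$ with $\gamma \circ \phi = \alpha$ and whose reduction modulo $I$ is $\beta$. The first condition says $\gamma$ is a $k$-algebra homomorphism, so $\gamma$ is an $(m+1)$-jet valued in $K$. The second says it lifts $\beta$. Its center is $\ker(\gamma_0) = \ker(\beta_0) = \fp$, since both compose to the same map to $K$. This is the jet lifting property at $\fp$.

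There is no real obstacle. The only step with content is the continuity check above; everything else is unwinding Definition~\ref{defn:tfsm}.
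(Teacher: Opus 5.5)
Your proposal is correct and matches the paper's proof. Both observe that a jet centered at $\fp$ kills $\fp^{m+1}$, hence is continuous for the $\fp$-adic topology, and then apply topological formal smoothness to the square-zero extension $K[t]/(t^{m+2}) \to K[t]/(t^{m+1})$ to obtain the lift.
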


\begin{proof}
Let $\alpha \colon A \to K[t]/(t^{m+1})$ be a jet centered at $\fp$. Note that $\fp^{m+1} \subset \ker(\alpha)$, which shows that $\alpha$ is continuous, where we use the $\fp$-adic topology on the source and the discrete topology on the target. Consider the following diagram:
\begin{displaymath}
\xymatrix@C=4em{
A \ar[r]^-{\alpha} \ar@{..>}[rd]_-{\beta} & K[t]/(t^{m+1}) \\
k \ar[u] \ar[r] & K[t]/(t^{m+2}) \ar[u] }
\end{displaymath}
Since $\alpha$ is continuous, topological formal smoothness implies that the map $\beta$ exists. We have thus lifted $\alpha$ to an $(m+1)$-jet, as required.
\end{proof}

In the jet lifting property, we require that the lift of our jet be valued in the same field. In fact, allowing for extension fields yields an equivalent condition:

\begin{proposition} \label{prop:jlp-field}
Let $A$ be a $k$-algebra, let $\fp$ be a prime ideal, and let $\alpha$ be an $m$-jet of $A$ centered at $\fp$ and valued in $K/k$. Suppose that $\alpha$ lifts to an $(m+1)$-jet over some extension field of $K$. Then $\alpha$ also lifts to an $(m+1)$-jet over $K$.
\end{proposition}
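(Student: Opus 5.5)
The lifting problem is linear: it is governed by a cohomology class, and that class vanishes over $K$ exactly when it vanishes over any extension $L/K$. First reduce to the local ring: jets centered at $\fp$ correspond to jets of $A_{\fp}$, so we may work with $A_{\fp}$. Write $\alpha\colon A \to K[t]/(t^{m+1})$. Since $K[t]/(t^{m+2}) \to K[t]/(t^{m+1})$ is a square-zero extension with kernel $I = t^{m+1}K \cong K$, the obstruction to lifting $\alpha$ is a class $o(\alpha) \in \mathrm{Ext}^1_A(L_{A/k}, K)$, or more concretely an element of $\mathrm{Exal}_k(A, K)$, obtained by pulling back this extension along $\alpha$. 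Here $K$ is an $A$-module via $\alpha_0$. The $k$-algebra extension $E = A \times_{K[t]/(t^{m+1})} K[t]/(t^{m+2}) \to A$ has kernel $K$, and $\alpha$ lifts over $K$ if and only if this extension splits as a $k$-algebra extension.

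The key steps are as follows.
\begin{enumerate}
\item Show that lifts over an extension field $L$ correspond to splittings of $E \otimes_K L$. More precisely, they correspond to splittings of the pushout of $E$ along $K \to L$, which is the extension $E_L = A \times_{L[t]/(t^{m+1})} L[t]/(t^{m+2})$ with kernel $L$.
\item Use the functoriality $\mathrm{Exal}_k(A,K) \to \mathrm{Exal}_k(A,L)$ induced by the $A$-module map $K \to L$. Under this map, $o(\alpha) \mapsto o(\alpha_L)$.
\item Show that this map is injective. Choose a $K$-linear retraction $\pi\colon L \to K$, which exists because $K \to L$ is a split inclusion of $K$-vector spaces. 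The map $\pi$ is $A$-linear, since $A$ acts on $L$ through $\alpha_0\colon A \to K$ and $\pi$ is $K$-linear. Functoriality of $\mathrm{Exal}_k(A,-)$ in the module variable then gives that the composite $\mathrm{Exal}_k(A,K)\to\mathrm{Exal}_k(A,L)\to\mathrm{Exal}_k(A,K)$ is the identity.
\item Conclude: if $\alpha$ lifts over $L$, then $o(\alpha_L)=0$, hence $o(\alpha)=0$, hence $\alpha$ lifts over $K$.
\end{enumerate}

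One can avoid the $\mathrm{Exal}$ language by arguing directly. Let $\beta\colon A \to L[t]/(t^{m+2})$ be a lift of $\alpha$ composed with $K[t]\to L[t]$. Apply the map $\Pi\colon L[t]/(t^{m+2}) \to K[t]/(t^{m+2})$ that is the identity modulo $t^{m+1}$ and equals $\pi$ on the top coefficient. This map is well defined on the subring $R = K[t]/(t^{m+2}) + t^{m+1}L$, and $\beta$ lands in $R$ because $\beta$ reduces to $\alpha$, which is $K$-valued mod $t^{m+1}$. Moreover $\Pi\colon R \to K[t]/(t^{m+2})$ is a ring homomorphism. The only products to check are those landing in degree $m+1$, namely $(a+t^{m+1}\ell)(b+t^{m+1}\ell')$. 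The $t^{m+1}$ coefficient of this product is $(ab)_{m+1} + a_0\ell' + b_0\ell$ with $a_0,b_0\in K$, and $\pi$ respects this because $\pi$ is $K$-linear and fixes $K$. So $\Pi\circ\beta$ is the desired lift over $K$.

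I expect the main obstacle to be exactly verifying that $\Pi$ is a $k$-algebra homomorphism. This is where $K$-linearity of $\pi$ and the fact that $\beta$ is $K$-valued modulo $t^{m+1}$ are both used. Everything else is bookkeeping.
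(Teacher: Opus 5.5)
Your argument is correct. It reaches the conclusion without choosing a presentation, whereas the paper works in coordinates.

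The paper writes $A=S/I$ with $S=k[x_i]_{i\in I}$ and parametrizes candidate lifts by $\beta(x_i)=\alpha_i+c_it^{m+1}$. It notes that $\beta$ kills the relations $f_j$ exactly when an inhomogeneous linear system $\ell_j(c)=0$ with coefficients in $K$ holds. It then simply asserts that a linear system over $K$ that is solvable over an extension field is solvable over $K$.

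Your map $\Pi$ is the intrinsic form of that last step. On the images of the generators $x_i$, $\Pi\circ\beta$ replaces each $c_i$ by $\pi(c_i)$. Your check that $\Pi$ is multiplicative on $R$ is what guarantees the relations still hold; it uses that $\pi$ is $K$-linear and fixes $K$, and that $\beta$ has $K$-coefficients below degree $m+1$.

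Since the paper's system may have infinitely many equations and unknowns, its one-line appeal to linear algebra is most naturally justified by exactly your retraction $\pi\colon L\to K$. So your write-up makes explicit what the paper leaves implicit. In exchange, the paper's coordinate version needs no check that anything is a ring homomorphism, and it displays the lifts concretely as solutions of a linear system.

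The $\mathrm{Exal}_k(A,K)\to\mathrm{Exal}_k(A,L)$ formulation is a correct repackaging of the same retraction argument. Injectivity holds because $\pi$ is $A$-linear when $A$ acts through $\alpha_0$. The reduction to $A_{\fp}$ is harmless but unnecessary.
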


\begin{proof}
Choose a presentation $A=S/I$, where $S=k[x_i]_{i \in I}$ is a polynomial ring and $I=(f_j)_{j \in J}$ is an ideal of $S$. Let $\alpha_i=\alpha(x_i)$, which we represent as an element of $K[t]$ of degree $\le m$. We now attempt to construct a lift $\beta$ of $\alpha$ to an $(m+1)$-jet valued in an extension $E/K$. Let $\{c_i\}_{i \in I}$ be elements of $E$, and define a ring homomorphism $\beta \colon S \to E[t]/(t^{m+2})$ by $\beta(x_i)=\alpha_i+c_i t^{m+1}$. The homomorphism $\beta$ factors through $I$ if and only if $\beta(f_j)=0$ for all $j \in J$. We have
\begin{displaymath}
\beta(f_j) = f_j(\alpha_i+c_i t^{m+1}) = \ell_j(c_i) t^{m+1},
\end{displaymath}
where $\ell_j$ is a (possibly inhomogeneous) linear form with coefficients in $K$, and we have discarded higher degree terms (since we are working modulo $t^{m+2}$). We thus see that $\beta$ factors through $A$ if and only if $\ell_j(c_i)=0$ for all $j \in J$.

Now, by assumption, there is a lift of $\alpha$ over some $E$. This means the system of equations $\ell_j=0$ has a solution in $E$. Since this is a system of linear equations, it follows that it has a solution in $K$. This gives a lift valued in $K$, as required.
\end{proof}

The jet lifting property can be formulated in terms of jet spaces, as we now explain. Let $X$ be a $k$-scheme. Recall that the \defn{$m$-th jet space}, denoted $J_m(X)$, is the $k$-scheme that represents the functor that maps a $k$-algebra $R$ to $X(R[t]/(t^{m+1}))$, the set of $(m+1)$-jets of $X$ valued in $R$. It is well-known that $J_m(X)$ is representable; see \cite{EM} for background. If $X$ is of finite type over $k$ then so is $J_m(X)$, and the natural map $J_m(X) \to X$ is affine. If $X=\Spec(A)$ is affine the $J_m(X)$ can be identified with the spectrum of $\operatorname{HS}^m_{A/k}$, the universal algebra of order $m$ higher derivations; see \cite{Voj}. For $x \in X$, we let $J_m(X)_x$ denote the fiber of $J_m(X) \to X$ over $x$.

\begin{proposition}
Let $X$ be a $k$-scheme and let $x \in X$. Then $\cO_{X,x}$ has the jet lifting property if and only if for every $m \ge 0$ the map $J_{m+1}(X)_x \to J_m(X)_x$ is surjective.
\end{proposition}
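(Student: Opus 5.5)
Unwind the functor of points of the fiber $J_m(X)_x$ and compare it with jets centered at $x$. For a field $K/k$, a $K$-point of $J_m(X)_x$ is a $K$-point of $J_m(X)$ whose image in $X$ is a $K$-point lying over $x$. That is, it is an $m$-jet $\Spec(K[t]/(t^{m+1})) \to X$ whose closed point maps to $x$, i.e., an $m$-jet valued in $K$ and centered at $x$, together with the implicit embedding $\kappa(x) \to K$. Since every point of a scheme is the image of a $K$-point for a suitable field $K$, the points of $J_m(X)_x$ correspond, up to field extension, to $m$-jets centered at $x$ valued in fields $K/k$. Because jets centered at $x$ factor through $\cO_{X,x}$, the jet lifting property for $\cO_{X,x}$ is the same as that for $X$ at $x$. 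We may also replace $X$ by an affine open neighborhood, since jet schemes are compatible with open immersions.

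For the forward direction, let $y \in J_m(X)_x$ with residue field $K=\kappa(y)$. The tautological $K$-point is an $m$-jet centered at $x$ and valued in $K$. The jet lifting property lifts it to an $(m+1)$-jet valued in $K$, i.e., a $K$-point of $J_{m+1}(X)$. This point lies over $x$ (the center is unchanged) and maps to $y$ under truncation. Hence $y$ is in the image, and the map is surjective.

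For the converse, let $\alpha$ be an $m$-jet valued in $K$ and centered at $x$, and let $y$ be the image of the corresponding $K$-point of $J_m(X)_x$. By surjectivity, pick $z \in J_{m+1}(X)_x$ mapping to $y$. The step needing care is producing a lift of $\alpha$ itself, and not merely of the point $y$. Apply the standard fact that a surjective morphism of schemes is surjective on field-valued points after field extension: the fiber of $J_{m+1}(X)_x \to J_m(X)_x$ over $\Spec K \to J_m(X)_x$ is a nonempty $K$-scheme (surjectivity is stable under base change, \stacks{01S1}). It therefore has a point with residue field some $E \supseteq K$. This gives an $(m+1)$-jet valued in $E$ whose truncation is $\alpha \otimes_K E$. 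Proposition~\ref{prop:jlp-field} then descends this to a lift of $\alpha$ valued in $K$, which establishes the jet lifting property.

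The main obstacle is bookkeeping rather than mathematics. One must identify fibers of jet schemes with jets centered at $x$ using the fiber-product description of points, and then invoke Proposition~\ref{prop:jlp-field} to remove the field extension introduced by passing from scheme points to field-valued points.
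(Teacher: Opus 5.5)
Your proposal is correct and takes the same route as the paper, whose entire proof is the remark that the statement follows from Proposition~\ref{prop:jlp-field}. You fill in the bookkeeping the paper leaves implicit: you identify field-valued points of $J_m(X)_x$ with $m$-jets centered at $x$, and you base change the surjection to obtain a lift of $\alpha$ over some field $E \supseteq K$. Proposition~\ref{prop:jlp-field} then does the one substantive step, descending that lift back to $K$.
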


\begin{proof}
This follows from Proposition~\ref{prop:jlp-field}.
\end{proof}

\subsection{Regularity}

Recall that a Noetherian local ring is regular if its embedding dimension and dimension agree. Hence one may think of their difference as a measure of singularities. In \cite{CdFD} this notion of embedding codimension was extended to non-Noetherian rings, which we will recall here. For any local ring $(A,\fm)$ we consider the natural surjection
\begin{equation} \label{eq:nat-surj}
\gamma \colon \Sym(\fm/\fm^2) \to \gr(R) = \bigoplus_{n \ge 0} \fm^n/\fm^{n+1}.
\end{equation}
where $\Sym$ denotes the symmetric algebra in the category of $A/\fm$-vector spaces. Then the \defn{embedding codimension} of $(A,\fm)$ is defined as
\[
	\embcodim(A) \coloneqq \operatorname{ht}(\ker \gamma).
\]
This is either a natural number or $\infty$. Geometrically the embedding codimension measures the codimension of the tangent cone inside the tangent space.

Motivated by the above we adopt the following notion of regularity for non-Noetherian local rings:

\begin{definition} \label{defn:regular}
A local ring $(A, \fm)$ is \defn{regular} if the natural map
\begin{displaymath}
\Sym(\fm/\fm^2) \to \bigoplus_{n \ge 0} \fm^n/\fm^{n+1}
\end{displaymath}
is an isomorphism, i.e., the embedding codimension is~0. A ring $A$ is \defn{regular} at a prime ideal $\fp$ if the local ring $A_{\fp}$ is regular, and a scheme $X$ is \defn{regular} at a point $x$ if the local ring $\cO_{X,x}$ is regular.
\end{definition}

Geometrically then, $(A, \fm)$ is regular if the tangent cone and tangent space coincide at the closed point of $\Spec{A}$. Sometimes (lack of) regularity can be transferred along a map of schemes. We recall the following result from \cite{CdFD}.

\begin{proposition}[{\cite[Proposition 6.6]{CdFD}}]
\label{prop:embcodim-map}
Let $\varphi \colon (B,\fn) \to (A,\fm)$ be a homomorphism of local rings and write $K = A/\fm$ and $L = B/\fn$. Assume that $B$ is Noetherian. Let $\varphi^* \colon \fn/\fn^2 \otimes_L K \to \fm/\fm^2$ be the induced $K$-linear map on Zariski cotangent spaces. Then:
\[
	\embcodim(A) \geq \rank(\varphi^*) - \dim(B).
\]
\end{proposition}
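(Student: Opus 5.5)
The plan is to compare the two tangent cones inside the finite-dimensional piece of the tangent space coming from $B$, and then pass to all of $\Sym(\fm/\fm^2)$ using flatness and going down. Since $B$ is Noetherian, $\fn/\fn^2$ is a finite-dimensional $L$-vector space. Hence $r := \rank(\varphi^*)$ is finite. Let $W \subset \fm/\fm^2$ be the image of $\varphi^*$, a $K$-subspace of dimension $r$. Choose a complement $W'$ of $W$ in $\fm/\fm^2$. Then $\Sym(\fm/\fm^2) = \Sym(W) \otimes_K \Sym(W')$ is a polynomial ring over $\Sym(W)$; in particular it is free, hence faithfully flat, over $\Sym(W)$. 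Note that $\Sym(W)$ is a polynomial ring in $r$ variables over $K$.

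\emph{Step 1: control $\ker\gamma_A \cap \Sym(W)$.} The local homomorphism $\varphi$ induces a graded $K$-algebra map $\gr(B) \otimes_L K \to \gr(A)$. Its image is the $K$-subalgebra of $\gr(A)$ generated by the image of $W$ in degree one, because $\gr(B)$ is generated in degree one by $\fn/\fn^2$. On the other hand, $\gamma_A(\Sym(W))$ is the same subalgebra. Hence
\[
\Sym(W)/(\ker\gamma_A \cap \Sym(W))
\]
is a quotient of $\gr(B)\otimes_L K$. Now $\gr(B)$ is a finitely generated graded $L$-algebra of Krull dimension $\dim B$, since $B$ is Noetherian local. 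Extension of scalars from $L$ to $K$ preserves the Krull dimension of a finitely generated algebra, for instance via Hilbert polynomials or Noether normalization. So the quotient above has dimension $\le \dim B$. Since $\Sym(W)$ is an affine polynomial domain of dimension $r$, every prime $\fq$ of $\Sym(W)$ satisfies $\operatorname{ht}\fq + \dim \Sym(W)/\fq = r$. Consequently every prime of $\Sym(W)$ containing $\ker\gamma_A\cap\Sym(W)$ has height $\ge r-\dim B$.

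\emph{Step 2: transfer to $\Sym(\fm/\fm^2)$.} Let $Q$ be any prime of $\Sym(\fm/\fm^2)$ containing $\ker\gamma_A$. Then $\fq := Q\cap\Sym(W)$ contains $\ker\gamma_A\cap\Sym(W)$, so $\operatorname{ht}\fq \ge r-\dim B$ by Step 1. Since $\Sym(W)\to\Sym(\fm/\fm^2)$ is flat, going down holds, and this gives $\operatorname{ht} Q \ge \operatorname{ht}\fq$. Concretely, a chain of primes descending from $\fq$ lifts to a chain descending from $Q$. Taking the minimum over the minimal primes $Q$ of $\ker\gamma_A$ yields
\[
\embcodim(A)=\operatorname{ht}(\ker\gamma_A)\ge r-\dim B,
\]
which is the claimed inequality. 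This remains valid even though $\Sym(\fm/\fm^2)$ may have infinitely many variables.

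I expect the main obstacle to be Step 1. Specifically, one must identify the image of $\gr(B)\otimes_L K$ in $\gr(A)$ with $\gamma_A(\Sym(W))$, and check that $\dim(\gr(B)\otimes_L K)=\dim B$ when $K/L$ is an arbitrary, possibly transcendental, field extension. For the dimension equality I would argue via a Noether normalization $L[y_1,\dots,y_d]\subset \gr(B)$ with $d=\dim B$. This inclusion is finite, and it stays finite (and injective) after tensoring with $K$, so $\dim(\gr(B)\otimes_L K)=d$.
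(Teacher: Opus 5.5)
Your proof is correct. The paper does not prove this statement; it quotes it from \cite{CdFD}. The only related argument the paper carries out is the special case Lemma~\ref{prop:reg-contract}. There, the commutative square formed by $\Sym(\fm_x/\fm_x^2)\to\Sym(\fm_y/\fm_y^2)$ over $\gr(\cO_{X,x})\to\gr(\cO_{Y,y})$, with injective top and right arrows, forces $\gamma_x$ to be injective.

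Your Step~1 is the quantitative refinement of that square. You do not assume $\gamma_A$ is injective. Instead you restrict $\gamma_A$ to the subalgebra $\Sym(W)$ generated by the image of $\varphi^*$, and identify $\gamma_A(\Sym(W))$ with the image of $\gr(B)\otimes_L K$. This bounds $\dim \Sym(W)/(\ker\gamma_A\cap\Sym(W))$ by $\dim\gr(B)=\dim B$. Step~2 then carries the height bound from the finite polynomial ring $\Sym(W)$ up to $\Sym(\fm/\fm^2)$ by going down along the free extension $\Sym(W)\to\Sym(W)\otimes_K\Sym(W')$. Going down holds for arbitrary flat maps, so this is valid even with infinitely many variables.

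Geometrically, project the tangent space of $A$ linearly onto the $r$-dimensional space dual to $W$. This projection sends the tangent cone of $A$ into the base change of the tangent cone of $B$, and codimension cannot drop under pullback along this flat projection.

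What each approach buys:
\begin{itemize}
\item The paper's special-case argument is purely formal and needs no Noetherian hypothesis. However, it only detects embedding codimension $0$ when the cotangent map is injective.
\item Your route gives the full numerical inequality for arbitrary rank. Noetherianity of $B$ enters exactly where it must: it makes $r$ finite, so that $\operatorname{ht}\fq+\dim\Sym(W)/\fq=r$ holds, and it gives $\dim\gr(B)=\dim B<\infty$.
\end{itemize}
Only $\dim(\gr(B)\otimes_L K)\le\dim B$ is needed, so your Noether normalization remark is more than sufficient.
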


In Proposition~\ref{prop:mainthm-3} we will make use of a special case of Proposition~\ref{prop:embcodim-map}. Since the argument simplifies (and does not require any Noetherianity hypotheses) we include it for the reader's convenience. First, we start with the following definition.

\begin{definition} \label{defn:subm}
Let $f \colon Y \to X$ be a map of schemes, let $y \in Y$, and let $x=f(x)$. We say that $f$ is \defn{submersive} at $y$ if the natural map
\begin{displaymath}
\kappa(y) \otimes_{\kappa(x)} \fm_x/\fm_x^2 \to \fm_y/\fm_y^2
\end{displaymath}
is injective.
\end{definition}

\begin{lemma} \label{prop:reg-contract}
Let $f \colon Y \to X$ be a map of schemes, let $y \in Y$, and let $x=f(y)$. If $f$ is submersive at $y$ and $Y$ is regular at $y$ then $X$ is regular at $x$.
\end{lemma}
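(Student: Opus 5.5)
We reduce to the local homomorphism $\phi \colon (A,\fm) = (\cO_{X,x},\fm_x) \to (B,\fn) = (\cO_{Y,y},\fm_y)$ and write $K=A/\fm$, $L=B/\fn$. The map $\phi$ induces a map of graded rings $\gr(A) \otimes_K L \to \gr(B)$, and it fits into a commutative square
\begin{displaymath}
\xymatrix{
\Sym_K(\fm/\fm^2) \otimes_K L \ar[r]^-{\gamma_A \otimes L} \ar[d]_{\Sym(\phi^*)} & \gr(A) \otimes_K L \ar[d] \\
\Sym_L(\fn/\fn^2) \ar[r]^-{\gamma_B} & \gr(B) }
\end{displaymath}
where $\phi^* \colon \fm/\fm^2 \otimes_K L \to \fn/\fn^2$ is the map on cotangent spaces. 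Commutativity is immediate: both compositions send a product $\bar a_1 \cdots \bar a_n$ of classes of elements $a_i \in \fm$ to the class of $\phi(a_1)\cdots\phi(a_n)$ in $\fn^n/\fn^{n+1}$. Since $\gamma_A$ is surjective, so is $\gamma_A \otimes L$, so it suffices to show that $\gamma_A \otimes L$ is injective. Then it is an isomorphism, and by faithful flatness of $L$ over $K$ the map $\gamma_A$ is also an isomorphism. This says exactly that $A$ is regular.

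For injectivity, we chase around the square. The submersivity hypothesis says $\phi^*$ is injective. Over the field $L$, an injective linear map $W \to W'$ induces an injective map $\Sym_L(W) \to \Sym_L(W')$, since it is split injective and $\Sym$ preserves split injections. We also have $\Sym_K(\fm/\fm^2)\otimes_K L = \Sym_L(\fm/\fm^2 \otimes_K L)$, so the left vertical arrow is injective. The regularity of $Y$ at $y$ says that $\gamma_B$ is an isomorphism, hence injective. So the composite along the bottom-left is injective, and therefore $\gamma_A \otimes L$, being the first factor of the equal composite along the top-right, is injective.

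The main point requiring care is checking that the square really commutes and that all maps are well defined. In particular, $\gr(\phi)$ is defined because $\phi(\fm^n) \subset \fn^n$, which holds since $\phi$ is local. Apart from this, the argument is formal and uses no Noetherian hypotheses, in keeping with the remark preceding the statement.
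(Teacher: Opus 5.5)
Your proof is correct and is essentially the paper's argument: the same commutative square comparing $\gamma_x$ and $\gamma_y$ via the induced map on associated graded rings, where injectivity of the $\Sym$-level arrow and of $\gamma_y$ forces injectivity of $\gamma_x$. The only difference is that you base-change to $L=\kappa(y)$ and descend by faithful flatness of $L$ over $K$, which spells out why the arrow $\Sym(\fm_x/\fm_x^2)\to\Sym(\fm_y/\fm_y^2)$ in the paper's diagram is injective even when the residue field extension is nontrivial.
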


\begin{proof}
By assumption we have the following diagram
\[
\xymatrix{ \Sym(\fm_x/\fm_x^2) \ar[r] \ar@{->>}[d]_{\gamma_x} & \Sym(\fm_y/\fm_y^2) \ar@{->>}[d]^{\gamma_y} \\
	\gr(\cO_{X,x}) \ar[r] & \gr(\cO_{Y,y})}
\]
with the top horizontal and the right vertical maps being injective. Hence $\gamma_x$ is injective, which implies that $X$ is regular at $x$.
\end{proof}


We also want to mention the following fact, which we have already alluded to previously.

\begin{lemma}
\label{lem:colimit-regular}
Let $(A,\fm)$ be a local ring that is the directed colimit of local rings $(A_n,\fm_n)$. If $(A_n,\fm_n)$ is regular for $n \gg 0$, then $(A,\fm)$ is regular.
\end{lemma}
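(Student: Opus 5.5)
We show directly that $\gamma \colon \Sym_K(\fm/\fm^2) \to \gr(A)$ is injective, where $K=A/\fm$. The idea is that every relevant object commutes with directed colimits. Injectivity then reduces to a statement about a single $A_n$ with $n$ large, where $\gamma_n$ is injective by hypothesis. Two subtleties require care. First, the residue fields $K_n=A_n/\fm_n$ change with $n$. Second, the transition maps need not be local, nor need they satisfy $\fm_n A_{n+1}\subset \fm_{n+1}$ in a way compatible with taking colimits of $\fm$.

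\textbf{Setup.} Since $A$ is the directed colimit and is local with maximal ideal $\fm$, the images of the $\fm_n$ land in $\fm$: an element of $\fm_n$ whose image were a unit in $A$ would already be a unit in some $A_{n'}$, and I will take the colimit in the category of local rings with local homomorphisms, so that the transition maps are local. Then $\fm=\varinjlim \fm_n$ and $K=\varinjlim K_n$. Moreover $\fm^d=\varinjlim \fm_n^d$, because an element of $\fm^d$ is a finite sum of products of elements of $\fm$, all of which come from some common $A_n$. Since directed colimits are exact, it follows that
\begin{displaymath}
\fm^d/\fm^{d+1}=\varinjlim \fm_n^d/\fm_n^{d+1},
\end{displaymath}
and so $\gr(A)=\varinjlim \gr(A_n)$. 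Likewise $\Sym_K(\fm/\fm^2)=\varinjlim \Sym_{K_n}(\fm_n/\fm_n^2)$, because symmetric powers commute with directed colimits and $\fm/\fm^2=\varinjlim \fm_n/\fm_n^2$ as modules over the varying fields $K_n \to K$. The maps $\gamma_n$ are compatible with the transition maps, and $\gamma=\varinjlim \gamma_n$.

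\textbf{Injectivity.} Let $\xi\in\ker\gamma$. Then $\xi$ is the image of some $\xi_n\in \Sym(\fm_n/\fm_n^2)$, and $\gamma_n(\xi_n)$ maps to zero in $\gr(A)$. Because $\gr(A)$ is the colimit, $\gamma_n(\xi_n)$ already vanishes in $\gr(A_{n'})$ for some $n'\ge n$. By compatibility, the image $\xi_{n'}$ of $\xi_n$ satisfies $\gamma_{n'}(\xi_{n'})=0$. Choosing $n'$ large enough that $A_{n'}$ is regular, we get $\xi_{n'}=0$, hence $\xi=0$. So $\gamma$ is an isomorphism and $A$ is regular.

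\textbf{Main obstacle.} The step I expect to need the most care is the identification $\fm^d/\fm^{d+1}=\varinjlim \fm_n^d/\fm_n^{d+1}$. This requires that the transition maps carry $\fm_n$ into $\fm_{n'}$, i.e.\ that they are local. If the intended meaning allows non-local transition maps, I would first replace each $A_n$ by its localization at the preimage of $\fm$. That preimage contains $\fm_n$ and is a prime of the local ring $A_n$, so the argument would require it to equal $\fm_n$, i.e.\ the maps to be local. I would state locality of the transition maps as an implicit part of the hypothesis; it holds in the intended applications, where each $A_n$ is a local ring of $X\{k^n\}$ at the image point.
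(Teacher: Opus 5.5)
Your proof is correct and follows the same route as the paper. You identify $\gamma$ as the directed colimit of the maps $\gamma_n \colon \Sym(\fm_n/\fm_n^2) \to \gr(A_n)$ and conclude injectivity by exactness of directed colimits, supplying the details the paper calls straightforward; your explicit assumption that the transition maps are local is the right implicit hypothesis, and it holds in the paper's application to the local rings $\cO_{X\{k^n\},\pi_n(x)}$.
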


\begin{proof}
It is straightforward to check that the natural map \eqref{eq:nat-surj} is the directed colimit of the natural maps
\[
	\Sym(\fm_n/\fm_n^2) \to \gr(R_n),
\]
which by assumption are all isomorphism for $n \gg 0$. Hence \eqref{eq:nat-surj} is an isomorphism.
\end{proof}

\subsection{Comparison}

The following proposition compares the various notions of smoothness and regularity discussed above.

\begin{proposition} \label{prop:compare}
Let $(A, \fm)$ be a local $k$-algebra. The following are equivalent:
\begin{enumerate}
\item $A$ is regular.
\item $A$ is topologically formally smooth over $k$ in the $\fm$-adic topology.
\item $A$ has the jet lifting property at $\fm$.
\end{enumerate}
These conditions are implied by the following:
\begin{enumerate}
\setcounter{enumi}{3}
\item $A$ is formally smooth over $k$.
\end{enumerate}
There exist local $k$-algebras that satisfy (a)--(c) and not (d).
\end{proposition}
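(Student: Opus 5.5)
The plan is to prove the cycle (a)$\Rightarrow$(b)$\Rightarrow$(c)$\Rightarrow$(a), then add (d)$\Rightarrow$(b) and a counterexample. Two steps are already in hand: (b)$\Rightarrow$(c) is Proposition~\ref{prop:jlp-tfsm}, and (d)$\Rightarrow$(b) is the general fact \stacks{07EC} that formally smooth maps are topologically formally smooth. For the final assertion I would use $k\lbb t \rbb$ from Example~\ref{ex:not-form-smooth-3}. Its graded ring for the $t$-adic filtration is $k[\bar t]$, so it is regular, but it is not formally smooth in characteristic~$0$. Alternatively, Example~\ref{ex:not-form-smooth-1} should also work once regularity is checked via Lemma~\ref{lem:colimit-regular}.

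Both remaining implications use one tool: a coefficient field for the truncations of $A$. Put $K=A/\fm$. Since $\operatorname{char} k=0$, the extension $K/k$ is separable, hence formally smooth. So $K$ lifts to a compatible system of maps $K \to A/\fm^n$, built inductively through the square-zero thickenings $A/\fm^{n+1}\to A/\fm^n$. Next choose a $K$-basis $\{\bar x_i\}$ of $\fm/\fm^2$ and lifts $x_i\in\fm$. This gives $K[x_i]_{i} \to A/\fm^n$. The map is surjective because $\gr(A)$ is generated in degree~$1$ by the $\bar x_i$, and induction on $n$ then shows surjectivity. Its kernel contains $(x_i)^n$.

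\textbf{(a)$\Rightarrow$(b).} If $\gamma$ is an isomorphism, the map $K[x_i]/(x_i)^n\to A/\fm^n$ induces an isomorphism on associated graded rings, so it is an isomorphism. Now let $\alpha \colon k\to R$ and $\beta \colon A\to R/I$ be as in the definition, with $\beta$ continuous. Then $\beta$ factors through $A/\fm^n\cong K[x_i]/(x_i)^n$ for some $n$. I lift $K\to R/I$ to $K\to R$ using formal smoothness of $K/k$. I send each $x_i$ to an arbitrary lift $y_i\in R$ of $\beta(x_i)$. Any product of $n$ of the $y_i$ lies in $I$, so any product of $2n$ of them lies in $I^2=0$. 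This holds even with infinitely many variables. Hence we obtain a map $A\to A/\fm^{2n}\cong K[x_i]/(x_i)^{2n}\to R$. It is continuous and lifts $\beta$.

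\textbf{(c)$\Rightarrow$(a).} This is the main step. Since $\ker\gamma$ is graded, suppose $F\neq 0$ is homogeneous of degree $d$ in $\ker\gamma$. Write $F$ as a polynomial in the $\bar x_i$ with coefficients in $K$. Lifting via the coefficient field gives $f=F(x_i)\in\fm^d$, and $\gamma(F)=0$ means $f\in\fm^{d+1}$. Let $L=K(c_i)_i$ be a purely transcendental extension. The tangent vector $v\colon \bar x_i\mapsto c_i$ then satisfies $F(v)\neq0$. Using $A/\fm^2\cong K\oplus\fm/\fm^2$, define a $1$-jet $A\to L[t]/(t^2)$ centered at $\fm$ by $a+\sum\lambda_i x_i\mapsto a+\sum\lambda_i c_i t$. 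By (c), applied $d-1$ times, together with Proposition~\ref{prop:jlp-field} (which handles the field $L$ being larger than $K$), this lifts to a $d$-jet $\beta\colon A\to L[t]/(t^{d+1})$.

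Now compute $\beta(f)$. Each $\beta(x_i)=c_it+O(t^2)$, and each coefficient maps to its residue plus $O(t)$. Hence $\beta(f)=F(c)t^d \bmod t^{d+1}$, which is nonzero. On the other hand, $\beta$ kills $\fm^{d+1}$ because $\beta(\fm)\subset(t)$, so $\beta(f)=0$, a contradiction. I expect the delicate points to be the construction of the coefficient field compatible with all truncations, and checking that the perturbations of the coefficient lifts do not affect the $t^d$ term.
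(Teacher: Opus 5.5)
Your proof is correct, but for (a)$\Rightarrow$(b) it takes a different route from the paper. The paper cites \cite[Chapter 0, 19.5.4]{EGAIVa} for (a)$\Leftrightarrow$(b). You instead prove (a)$\Rightarrow$(b) by hand: compatible coefficient fields $K \to A/\fm^n$ show that regularity forces $A/\fm^n \cong K[x_i]/(x_i)^n$, and you then lift through a square-zero thickening by sending the $x_i$ to arbitrary lifts and passing from level $n$ to level $2n$. Because you close the cycle (a)$\Rightarrow$(b)$\Rightarrow$(c)$\Rightarrow$(a), you never need the converse (b)$\Rightarrow$(a). Your argument is therefore self-contained, and it visibly works for non-Noetherian $A$ with infinite-dimensional cotangent space. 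The citation is shorter and applies more generally (separable residue field), but that generality buys nothing here, since characteristic~$0$ is used anyway. Your (c)$\Rightarrow$(a) is the paper's argument: split $A/\fm^2 \to K$, build a $1$-jet from a tangent vector at which the degree-$d$ relation is nonzero, and show it cannot lift to a $d$-jet. The only change is that you evaluate at a generic point over $L = K(c_i)$, whereas the paper uses that $K$ is infinite to pick $a \in K^n$.

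Two small remarks. First, Definition~\ref{defn:jlp} already allows jets valued in any extension field of $k$, so (c) applies to your $L$-valued jet directly. Proposition~\ref{prop:jlp-field} is not needed, and it runs in the opposite direction (descending a lift to a smaller field), so it is not what justifies lifting over $L$. Second, your counterexample $k\lbb t \rbb$ from Example~\ref{ex:not-form-smooth-3} is a valid alternative to the paper's Examples~\ref{ex:not-form-smooth-1} and~\ref{ex:not-form-smooth-2}. Its regularity is immediate, and the failure of (d) rests on the cited fact.
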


\begin{proof}
The equivalence of (a) and (b) is \cite[Chapter 0, 19.5.4]{EGAIVa}. We have seen (Proposition~\ref{prop:jlp-tfsm}) that (b) implies (c). We have also remarked that (d) implies (b) \stacks{07EC}, and have given examples (Examples~\ref{ex:not-form-smooth-1} and~\ref{ex:not-form-smooth-2}) to show that the converse can fail.

To complete the proof, we show that (c) implies (a); we actually prove the contrapositive. Thus suppose $A$ is not regular. Let $\kappa=A/\fm$. Since $A$ is not regular, we can find $\kappa$-linearly independent elements $y_1, \ldots, y_n$ of $\fm/\fm^2$ and a homogeneous polynomial $f \in \kappa[T_1, \ldots, T_n]$ of some degree $d \ge 2$ such that $f(y_1, \ldots, y_n)=0$ as an element of $\fm^d/\fm^{d+1}$. Let $x_i \in \fm$ be a lift of $y_i$ and let $F \in A[T_1, \ldots, T_n]$ be a lift of $f$. Thus $F(x_1, \ldots, x_n)$ belongs to $\fm^{d+1}$.

Since $\kappa$ is infinite, we can find a point $(a_1, \ldots, a_n) \in \kappa^n$ such that $f(a_1, \ldots, a_n) \ne 0$; fix such a choice. Also, since we are in characteristic~0, $\kappa$ is formally smooth over $k$ \stacks{0322}, which implies that the natural map $A/\fm^2 \to \kappa$ splits as a map of $k$-algebras. Fix a choice of splitting, and thus regard $A/\fm^2$ as a $\kappa$-algebra. Giving a $\kappa$-linear map $\alpha \colon A/\fm^2 \to \kappa[t]/(t^2)$ is equivalent to giving a $\kappa$-linear map $\alpha_0 \colon \fm/\fm^2 \to \kappa$. Let $\alpha_0$ be any linear map such that $\alpha_0(y_i)=a_i$ for $1 \le i \le n$, which exists since the $y_i$ are linearly independent, and let $\alpha$ be the corresponding map; note that $\alpha(x_i)=a_it$. We regard $\alpha$ as a 1-jet of $A$ centered at $\fm$. We claim that $\alpha$ does not lift to a $d$-jet, which will complete the proof.

Suppose $\beta \colon A \to \kappa[t]/(t^{d+1})$ is a $d$-jet lifting $\alpha$, meaning that $\beta$ is a map of $k$-algebras that agrees with $\alpha$ modulo $t^2$. Since $\beta(\fm) \subset (t)$, we have $\beta(\fm^{d+1})=0$, and so $\beta(F(x_1, \ldots, x_n))=0$. We now compute this in a different way. Write $F(T_1, \ldots, T_n) = \sum_{e} C_e T^e$, where the sum is over multi-indices $e$ of total degree $d$, and $C_e \in A$ is a coefficient; let $c_e \in \kappa$ be the image of $C_e$. Since $\beta$ agrees with $\alpha$ modulo $t^2$ and $\alpha$ is $\kappa$-linear, we have
\begin{displaymath}
\beta(C_e) = \alpha(C_e)+O(t^2) = c_e+O(t), \qquad
\beta(x_i) = \alpha(x_i)+O(t^2) = a_i t + O(t^2).
\end{displaymath}
We thus see that $\beta(C_e x^e) = c_e a^e t^d + O(t^{d+1})$, and the error term here does not matter since we are working modulo $(t^{d+1})$. We therefore have
\begin{displaymath}
\beta(F(x_1, \ldots, x_n)) = f(a_1, \ldots, a_n) t^d \ne 0,
\end{displaymath}
which is a contradiction. This completes the proof.
\end{proof}

\begin{remark} \label{rmk:1-jet}
The proof actually shows a bit more: if every 1-jet of $A$ centered at $\fm$ lifts to a $d$-jet, for every $d \ge 2$, then $A$ is regular, and thus has the jet lifting property at $\fm$.
\end{remark}

%
%

\section{The core of a $\GL$-variety} \label{s:core}

In this section, we introduce the notion of the core of a $\GL$-variety. The core is the locus of points where the variety is not locally elementary, i.e., a product of a finite-dimensional variety and an affine space. Our main result is that the core can be characterized in terms of embedding codimension; we note that the key idea here is due to \cite{CDD}. In particular, the core is always contained in the singular locus. 
This means that the local structure at a non-singular point is easy (see Corollary~\ref{cor:core-sing}), and this is a key input into the proof of Theorem~\ref{mainthm}. We believe that the core could be a useful concept in the future study of $\GL$-varieties.

\subsection{Magnitude}

Given a polynomial representation $A$, we write $A_n$ for its degree $n$ piece. Suppose $A$ has finite length, and let $\ell_i$ be the length of the graded piece $A_i$. The \defn{magnitude} of $A$ is the tuple $(\ell_0, \ell_1, \ell_2, \ldots)$, which is an element of $\bN^{\oplus \infty}$. We compare magnitudes using the reverse lexicographic order; this is a well-ordering, and permits us to use induction on magnitude. Such induction arguments are common in the theory of $\GL$-varieties, and will appear in \S \ref{s:core}.

Here is an important observation. Suppose $A$ is as above and concentrated in degrees $\le d$. Then the degree $d$ piece of the magnitude of $A$ and its restriction to $G(n)$ agree. Hence, if we restrict to $G(n)$ and then quotient by any subrepresentation in degree $d$, the magnitude drops. For instance, suppose $A=\Sym^2(\bV)$. Its magnitude is $(0, 0, 1, 0, \ldots)$. Restricting to $G(n)$ gives
\begin{displaymath}
\Sym^2(\bV) = \Sym^2(\bV_{>n}) \oplus (k^n \otimes \bV_{>n}) \oplus \Sym^2(k^n)
\end{displaymath}
where $\bV_{>n}$ is the subspace of $\bV$ spanned by the $e_i$ with $i>n$. As a representation of $G(n)$, this has magnitude $(\binom{n+1}{2}, n, 1, 0, \ldots)$. Thus if we take the above representation and quotient by $\Sym^2(\bV_{>n})$ the magnitude has nothing in degree~2, and is thus smaller than the initial magnitude.

\subsection{The shift theorem} \label{ss:shift}

We first recall some definitions and results from \cite{polygeom}. The following result is the ``embedding theorem'' \cite[\S 1.2]{polygeom}:

\begin{theorem} \label{thm:embedding}
Let $Y$ be a $\GL$-variety, let $R$ be an irreducible polynomial representation of $\GL$ of degree $d>0$, and let $X$ be a closed $\GL$-subvariety of $Y \times \bA(R)$. Then one of the following holds:
\begin{enumerate}
\item $X$ is cylindrical, that is, $X = Y_0 \times \bA(R)$ for some closed $\GL$-subvariety $Y_0 \subset Y$.
\item There is a non-zero $G(n)$-invariant function $h$ on $X$, for some $n$, such that the $G(n)$-variety $X[1/h]$ embeds into $Y \times \bA(W)$, where $W$ is a polynomial $G(n)$-representation such that each irreducible summand has degree $<d$.
\end{enumerate}
\end{theorem}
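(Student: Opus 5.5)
The plan is to follow the argument of \cite{polygeom}: find an equation of $X$ that genuinely involves the $R$-coordinates, and use the $\GL$-action to turn it into one that is linear in the top-degree coordinates, with a coefficient $h$ that is nonzero on $X$. Write $Y=\Spec(A)$ and $X=\Spec(B)$, with $B=(A\otimes\Sym(R))/I$ for a $\GL$-stable radical ideal $I$. Let $Y_0\subset Y$ be the closure of the image of $X$ under projection, with ideal $I_0=I\cap A$.

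\textbf{Step 1 (reduction).} If $I=I_0\cdot(A\otimes\Sym(R))$, then $X=Y_0\times\bA(R)$ and we are in case (a). Otherwise some element of $I$ does not vanish on $Y_0\times\bA(R)$. Among all such elements, choose $f\in I$ of minimal degree $e\ge 1$ in the $R$-variables. By minimality, any element of $A\otimes\Sym(R)$ of $R$-degree $<e$ that lies in $I$ vanishes on $Y_0\times\bA(R)$. Since $\GL$ acts through finite-dimensional pieces, $f$ involves only finitely many basis vectors $e_i$. Hence $f$ is $G(n)$-invariant for some $n$.

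\textbf{Step 2 (shift).} Restrict to $G(n)$. Then $R|_{G(n)}=R'\oplus W$, where $R'\cong R$ is the degree-$d$ part (as for $\Sym^2$ in \S\ref{ss:shift}) and $W$ consists of $G(n)$-representations of degree $<d$. Since $f$ is $G(n)$-invariant, it only involves the coordinates of the finite-dimensional invariant part $R\{k^n\}\subset W$. Next apply the Lie algebra elements $E_{ij}$ with $i\le n<j$. They preserve $I$, and
\begin{displaymath}
E_{ij}f=\sum_{\xi}(E_{ij}\xi)\,\frac{\partial f}{\partial \xi}+(\text{terms from the action on }A),
\end{displaymath}
where $\xi$ runs over the coordinates of $R\{k^n\}$. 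The vectors $E_{ij}\xi$ lie in the degree-$<d$ and degree-$d$ parts spanned by vectors involving $e_j$. Among them are coordinates of $R'$.

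\textbf{Step 3 (the key step, and the main obstacle).} We must show that some partial derivative $h=\partial f/\partial\xi$, or a suitable $G(n)$-invariant combination of such derivatives, is nonzero on $X$. The $R$-degree of $\partial f/\partial\xi$ is $e-1$. Since $f$ does not vanish on $Y_0\times\bA(R)$ and we are in characteristic~$0$, some partial derivative also does not vanish on $Y_0\times\bA(R)$. By minimality of $e$ it is therefore not in $I$, so $h\neq 0$ on $X$ (using that $X$ is reduced). One must choose $\xi$ with the property that the relations $E_{ij}f\in I$, read off in the degree-$d$ part, become
\begin{displaymath}
h\cdot\eta_j=(\text{polynomial in }A,\ W\text{-coordinates}),
\end{displaymath}
one relation for each coordinate $\eta_j$ of $R'$. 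Making this bookkeeping precise is where I expect the difficulty. The tools are an Euler-type identity and the decomposition of $R|_{G(n)}$. It may be necessary to iterate over several shifts, or to use all $E_{ij}$ together so as to generate every coordinate of $R'$ from one of them under $G(n)$.

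\textbf{Step 4 (conclusion).} On $X[1/h]$, every coordinate of $R'$ is a function of $Y$ and the $W$-coordinates. Hence the projection
\begin{displaymath}
X[1/h]\to Y\times\bA(W)
\end{displaymath}
is a $G(n)$-equivariant closed embedding. Here $h$ is $G(n)$-invariant because $f$ is, and each irreducible summand of $W$ has degree $<d$, which gives case (b).
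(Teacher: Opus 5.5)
\textbf{There is a genuine gap: the embedding half of Step~3 is not proved, and the mechanism you propose for it fails when $d\ge 2$.}

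Your Step~1 and the first half of Step~3 are fine. Choose $f\in I$ of minimal $R$-degree outside the ideal $I_0\cdot(A\otimes\Sym(R))$ of $Y_0\times\bA(R)$ and differentiate in characteristic~$0$. This gives a $G(n)$-invariant $h=\partial f/\partial\xi$, with $\xi$ a coordinate of $R\{k^n\}$, that is nonzero on $X$. This is exactly the ``choose $f$ and $v$ so that $h$ does not vanish on $X$'' step the paper defers to \cite[Theorem~4.2]{polygeom}.

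The other half is the real content of the theorem, namely Proposition~\ref{prop:embedding2}. You must show that $h\eta\in I+A\otimes\Sym(W)$ for every coordinate $\eta$ of $R'=\bS_\lambda(\bV_{>n})$, where $R=\bS_\lambda(\bV)$ with $|\lambda|=d$. You never do this. Moreover, the mechanism you suggest cannot produce such relations when $d\ge 2$. A single Lie algebra element moving $e_i$ ($i\le n$) to $e_j$ ($j>n$) changes only one tensor slot. So it sends $\xi\in\bS_\lambda(k^n)$ into the $G(n)$-degree-$1$ piece $\bigoplus_{|\mu|=1}\bS_{\lambda/\mu}(k^n)\otimes\bS_\mu(\bV_{>n})$ and never into $R'$. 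For instance, for $R=\Sym^2\bV$ the coordinate $x_{11}$ goes to $2x_{1j}$, never to some $x_{jl}$ with $j,l>n$. Hence your claim that ``among them are coordinates of $R'$'' is false, and reading off the degree-$d$ part of $E_{ij}f\in I$ gives nothing.

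What is needed is a $d$-th order substitution followed by an isolation step.
\begin{itemize}
\item Take a linear map $N\colon k^n\to\bV_{>n}$, extend it by $0$ on $\bV_{>n}$, and set $u_t=1+tN\in\GL$. Since $I$ is $\GL$-stable and $k$ is infinite, every $t$-coefficient of $u_tf$ lies in $I$.
\item Write $u_t\xi=\sum_{k\le d}t^k\xi^{(k)}$, with $\xi^{(k)}$ in the $G(n)$-degree-$k$ part and $\xi^{(d)}=\bS_\lambda(N)(\xi)$. A coordinate of $R'$ can occur in the $t^d$-coefficient only when all $d$ powers of $t$ land on a single variable. So that coefficient equals $\sum_\xi\frac{\partial f}{\partial\xi}\,\bS_\lambda(N)(\xi)$ plus an element of $A\otimes\Sym(W)$.
\item In characteristic~$0$ the maps $\bS_\lambda(N)$ span $\mathrm{Hom}(\bS_\lambda(k^n),\bS_\lambda(U))$ for any finite-dimensional $U\subset\bV_{>n}$. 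Taking linear combinations therefore isolates $\frac{\partial f}{\partial\xi}\,\eta\in I+A\otimes\Sym(W)$ for every $\eta\in R'$. Alternatively, get one nonzero such $\eta$ and use that the set of all such $\eta$ is a $G(n)$-subrepresentation of the irreducible $R'$.
\end{itemize}
Only after this does Step~4 go through. Note also that what you obtain is a closed embedding of $X[1/h]$ into the open subset $(Y\times\bA(W))[1/h]$, not into $Y\times\bA(W)$ itself. This works because $h$ is already a function on $Y\times\bA(W)$, but $1/h$ must be allowed.
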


In fact, we can be more precise about the function $h$ appearing above, which will be important in our application. We record the statement here (see \cite[Theorem~4.1]{polygeom}):

\begin{proposition} \label{prop:embedding2}
Let $X$, $Y$, and $R$ be as in Theorem~\ref{thm:embedding}. Let $f$ be any function on $Y \times \bA(R)$ that vanishes on $X$, let $v \in R$, and let $h=\partial_v f$ be the corresponding partial derivative of $f$. Let $n$ be such that $f$ and $v$ (and thus $h$) are $G(n)$-invariant. Then the $G(n)$-variety $X[1/h]$ embeds into $Y \times \bA(W)$ for some $W$ as in case (b).
\end{proposition}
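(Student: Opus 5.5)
The plan is to prove this by the ``solve for the top-degree coordinates'' argument. Restricting $R$ to $G(n)$, I will decompose it as $R=R_{\rm top}\oplus R_{\rm low}$. Here $R_{\rm top}$ is the degree-$d$ part, which is $\bS_\lambda(\bV_{>n})$, and $R_{\rm low}$ is a direct sum of $G(n)$-irreducibles of degree $<d$. This is the magnitude observation of the previous subsection. The element $v$ is $G(n)$-invariant, so it lies in $R\{k^n\}\subset R_{\rm low}$. Coordinates on $Y\times\bA(R)$ then split into $Y$-coordinates, $R_{\rm low}$-coordinates and $R_{\rm top}$-coordinates. The goal is to show that, after inverting $h$, the $R_{\rm top}$-coordinates restricted to $X$ are regular functions of the other two kinds. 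Granting this, the projection $X[1/h]\to Y\times\bA(R_{\rm low})$ is a closed embedding, and we may take $W=R_{\rm low}$, possibly enlarged by further lower-degree pieces if needed.

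To produce the needed equations I will use the infinitesimal action of $\GL$. The ideal of $X$ is $\GL$-stable, so it is stable under $\mathfrak{gl}$. Consider the subspace $E\subset\mathfrak{gl}$ of block matrices mapping $k^n$ into $\bV_{>n}$ and killing $\bV_{>n}$; this is a $G(n)$-representation of the form $\bV_{>n}\otimes(k^n)^*$. For $\xi\in E$ the function $\xi\cdot f$ again vanishes on $X$. By the chain rule, $\xi\cdot f$ is $\partial_{\xi v}f$ plus the contributions of the derivative of $f$ in the $Y$-direction and in the remaining $R$-directions. The point is to organize the family $\{\xi\cdot f\}_{\xi}$, together with its iterated versions $\xi_1\cdots\xi_d\cdot f$ obtained by applying up to $d$ elements of $E$ (enough to move $v$ into $R_{\rm top}$), into a $G(n)$-equivariant system. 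I will then check that the system is affine-linear in the $R_{\rm top}$-coordinates. Its coefficient matrix should be $h=\partial_vf$ times a $G(n)$-equivariant surjection (up to a constant) from the span of the $\xi_1\cdots\xi_d v$ onto $R_{\rm top}$, and its constant term should be a polynomial in the $Y$- and $R_{\rm low}$-coordinates. Since $f$ is $G(n)$-invariant, it involves only finitely many $R_{\rm low}$-coordinates and does not involve $\bV_{>n}$-directions except through equivariant combinations; this is what makes the $R_{\rm top}$-dependence of the differentiated equations linear at leading order.

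Once this is in place, inverting $h$ lets us solve for every $R_{\rm top}$-coordinate as a polynomial in the $Y$- and $R_{\rm low}$-coordinates divided by a power of $h$. The solution is $G(n)$-equivariant because all maps used are. Hence the coordinate ring of $X[1/h]$ is generated over $\cO(Y)[1/h]$ by the $R_{\rm low}$-coordinates, which gives the desired $G(n)$-equivariant closed embedding into $Y\times\bA(W)$, with $h$ regarded as a function on the target. One may need to adjoin $1/h$ as an extra degree-$0$ coordinate, which is harmless since degree $0<d$.

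The main obstacle is the middle step. One has to verify that the leading coefficient of the $R_{\rm top}$-coordinates in the differentiated equations is exactly $h$ times a nondegenerate equivariant map, rather than some other derivative of $f$. One also has to verify that the span of the $\xi_1\cdots\xi_d\, v$ surjects onto $R_{\rm top}$. My first approach to this would be to reduce to the case where $R$ is $\bV^{\otimes d}$ (projecting to $\bS_\lambda$ afterwards). In that case the Lie algebra action is explicit on tensors, and polarization identities make the leading term visible.
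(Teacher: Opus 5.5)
The paper gives no proof of this statement; it quotes it from \cite[Theorem~4.1]{polygeom}. Your strategy is the right kind of argument: use the abelian Lie algebra $E\cong\bV_{>n}\otimes(k^n)^*$ to produce equations of $X$ that are affine-linear in the $R_{\mathrm{top}}$-coordinates, then solve after inverting $h$. But the step you defer as the ``main obstacle'' is the entire content of the proposition, and as you formulate it, it fails.

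Since $f$ is $G(n)$-invariant, it is a polynomial in \emph{all} coordinates of $R\{k^n\}$, with coefficients that are functions on $Y$. Each $\xi\in E$ raises the $\bV_{>n}$-degree of a coordinate by exactly one. So an $R_{\mathrm{top}}$-coordinate can appear in $\xi_1\cdots\xi_d\cdot f$ only when all $d$ derivations hit the same factor. Hence, with $r$ running over a basis of $R\{k^n\}$ containing $v$, the $R_{\mathrm{top}}$-linear part of $\xi_1\cdots\xi_d\cdot f$ is $\sum_r(\xi_1\cdots\xi_d\,r)\,\partial_r f$. Its coefficients involve every partial $\partial_r f$, not just $h$, so it is not $h$ times anything. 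Knowing that the vectors $\xi_1\cdots\xi_d\,v$ span $R_{\mathrm{top}}$ therefore does not let you solve after inverting only $h$.

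In fact that spanning condition also holds for the reducible representation $\bV^{\otimes 2}$, where the conclusion is false. Let $X\subset\bA(\bV^{\otimes 2})$ be cut out by $t_{ij}=t_{ji}$, and take $f=t_{12}-t_{21}$ and $v=t_{12}$, so $h=1$. Then $X\cong\bA(\Sym^2\bV)$ admits no $G(2)$-equivariant closed embedding into $\bA(W)$ with $W$ of degree $\le 1$: the coordinates $t_{ab}$ with $a,b>2$ are not polynomials in functions of degree $\le 1$. Your sketch never uses that $R$ is irreducible, and this step is exactly where irreducibility has to enter.

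What you need is, for each $w\in R_{\mathrm{top}}$, an element $\Xi_w\in\Sym^d(E)$ with $\Xi_w v=w$ and $\Xi_w r=0$ for the other basis vectors $r$ of $R\{k^n\}$. (Note $E$ is abelian, so $U(E)=\Sym(E)$.) Equivalently, you need the action map $\Sym^d(E)\to\operatorname{Hom}(R\{k^n\},R_{\mathrm{top}})=\bS_\lambda(k^n)^*\otimes\bS_\lambda(\bV_{>n})$ to be surjective. This holds for three reasons:
\begin{itemize}
\item the map is $\GL_n\times G(n)$-equivariant;
\item its image is nonzero, since it contains the block $\bS_\lambda(\phi)=\phi^d/d!$ of $\exp(\phi)\in\GL$, which is nonzero for injective $\phi\in E$;
\item the target is an irreducible $\GL_n\times G(n)$-representation because $R=\bS_\lambda(\bV)$ is irreducible.
\end{itemize}
Alternatively, use the Cauchy decomposition of $\Sym^d(\bV_{>n}\otimes(k^n)^*)$.

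With these $\Xi_w$, you get $\Xi_w\cdot f=h\,w+P_w$, where $P_w$ is a polynomial in the $Y$- and $R_{\mathrm{low}}$-coordinates. So $w=-P_w/h$ on $X[1/h]$, and the rest of your argument goes through, giving a closed embedding into $Y\times\bA(R_{\mathrm{low}}\oplus k)$.

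Your proposed detour through $\bV^{\otimes d}$ would have to rediscover precisely this point. There the image of $\Sym^d(E)$ consists only of the $S_d$-equivariant maps, which is why $\bV^{\otimes d}$ itself fails and why the projection to $\bS_\lambda$ carries all the content.
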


Note that in the above proposition, it is possible that $h$ vanishes identically on $X$, in which case $X[1/h]$ is empty. For example, this happens if $f$ factors through $Y$. Thus to profitably apply the proposition, one must take care in the selection of $f$ and $v$. To prove Theorem~\ref{thm:embedding}, we show that in the non-cylindrical case it is possible to make a choice where $h$ does not vanish identically on $X$; see the proof of \cite[Theorem~4.2]{polygeom}.

We say that a $\GL$-variety is \defn{elementary}\footnote{In \cite{polygeom}, we required $B$ to be irreducible, but we drop that condition here.} if it has the form $B \times \bA(V)$, where $B$ is a finite dimensional variety (with trivial $\GL$ action) and $V$ is a finite length polynomial representation of $\GL$; we similarly define the notion for $G(n)$-varieties. The embedding theorem essentially says that a $\GL$-variety becomes simpler after shifting and passing to an open subvariety. By iterating this procedure, we arrive at the following theorem, which is the ``shift theorem'' \cite[\S 1.2]{polygeom}.

\begin{theorem} \label{thm:shift}
Let $X$ be a non-empty $\GL$-variety. Then there is a non-zero $G(n)$-invariant function $h$ on $X$, for some $n$, such that $X[1/h]$ is an elementary $G(n)$-variety.
\end{theorem}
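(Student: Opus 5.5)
We argue by induction on the magnitude of the ambient representation, with the embedding theorem (Theorem~\ref{thm:embedding}) as the inductive step, exactly as in \cite{polygeom}.

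Since $X$ is a $\GL$-variety, we may fix a closed embedding $X \subset \bA(U)$ with $U$ a finite length polynomial representation. We prove the following slightly more general statement by induction on the magnitude of $U$: if $X$ is a non-empty closed $G(m)$-subvariety of $B \times \bA(U)$, where $B$ is a finite dimensional variety with trivial action, then there exist $n \ge m$ and a non-zero $G(n)$-invariant function $h$ on $X$ such that $X[1/h]$ is an elementary $G(n)$-variety. We use that $G(m) \cong \GL$, so every result stated for $\GL$ applies to $G(m)$. We also use that shifting does not increase magnitude in the top degree, as explained in the discussion of magnitude.

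\textbf{Base case.} If $U$ is concentrated in degree $0$, then $B \times \bA(U)$ is finite dimensional with trivial action. Hence $X$ is itself elementary, and we take $h=1$.

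\textbf{Inductive step.} Otherwise, choose an irreducible summand $R \subset U$ of maximal degree $d>0$, and write $U = U' \oplus R$ and $Y = B \times \bA(U')$, so that $X \subset Y \times \bA(R)$. Apply Theorem~\ref{thm:embedding}.
\begin{enumerate}
\item In the cylindrical case, $X = Y_0 \times \bA(R)$ with $Y_0 \subset Y$ closed, and $Y_0$ is non-empty. The representation $U'$ has smaller magnitude than $U$, so the induction hypothesis gives $h_0$ on $Y_0$, invariant under some $G(n)$, with $Y_0[1/h_0]$ elementary. Pulling $h_0$ back to $X$ gives $X[1/h] = Y_0[1/h_0] \times \bA(R)$. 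This is again elementary, since $R$ restricted to $G(n)$ is still a finite length polynomial representation.
\item In case (b), there is a non-zero $G(n)$-invariant $h_1$ such that $X[1/h_1]$ embeds as a closed $G(n)$-subvariety of $Y \times \bA(W)$, with every summand of $W$ of degree $<d$. Restricting $U'$ to $G(n)$ splits off its degree-$0$ part as $G(n)$-invariants, which we absorb into $B$ (it is finite dimensional). The degree-$d$ multiplicity of $U' \oplus W$ is strictly smaller than that of $U$, because we removed one copy of $R$ and shifting preserves the top-degree multiplicity. In the reverse lexicographic order the magnitude therefore drops.
\end{enumerate}
To apply the induction hypothesis in case (b), we must present $X[1/h_1]$ as a variety of the form $\Spec$ of a finitely generated reduced algebra. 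We do this by adjoining $1/h_1$, an invariant, as an extra coordinate on the finite dimensional factor $B$; this keeps the trivial-action part finite dimensional. The induction hypothesis then gives $h_2$ on $X[1/h_1]$ with $X[1/h_1][1/h_2]$ elementary. Clearing the power of $h_1$ from $h_2$ and taking $h = h_1 h_2'$, we get $X[1/h]$ elementary, and $h$ is non-zero because the localization is non-empty.

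The main obstacle is the bookkeeping in case (b). First, the new variety is only a $G(n)$-variety sitting inside an open subset, so we must check that it is again of the required form after the relabeling $G(n) \cong \GL$. Second, we must verify that the magnitude genuinely decreases once the lower-degree pieces generated by shifting are taken into account. Both points are handled by the top-degree observation above and by absorbing invariants into $B$. Since the magnitude order is a well-ordering, the induction terminates.
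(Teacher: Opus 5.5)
Your proposal is correct and follows the paper's proof. Both argue by induction on magnitude: split off a top-degree irreducible summand $R$, apply the embedding theorem, handle the cylindrical case by induction on the smaller complement, and handle case (b) by the strict drop in top-degree multiplicity. Your extra finite-dimensional factor $B$, which absorbs $1/h_1$ and the degree-zero pieces created by shifting, is careful bookkeeping that the paper's sketch leaves implicit when it treats $X[1/h]$ as a closed subvariety of $\bA(V' \oplus W)$.
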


\begin{proof}
We sketch the proof, since the same idea will be used below. We prove the theorem for closed $\GL$-subvarieties of $\bA(V)$ by induction on the magnitude of $V$. Thus let $X \subset \bA(V)$ be given. If $V$ is concentrated in degree~0 then $X$ is finite dimensional and there is nothing to prove. Otherwise, write $V=V' \oplus R$ where $R$ is a top degree irreducible summand of $V$. Apply Theorem~\ref{thm:embedding} with $Y=\bA(V')$. There are two cases to consider.

In case (a), we have $X = Y_0 \times \bA(R)$, where $Y_0$ is a closed $\GL$-subvariety of $\bA(V')$. Since $V'$ has smaller magnitude than $V$, the theorem is true for $Y_0$ by the inductive hypothesis. Thus $Y_0[1/h]$ is an elementary $G(n)$-variety for some $n$ and $h$. Regarding $h$ as a function on $X$, we have $X[1/h] = Y_0[1/h] \times \bA(R)$, and so it too is elementary, as required.

In case (b), we see that $X[1/h]$ is a closed $G(n)$-subvariety of $\bA(V' \oplus W)$, where $W$ is a polynomial $G(n)$-representation concentrated in degrees strictly less than that of $R$. Thus $V' \oplus W$ has smaller magnitude than $V$, and so the result is true for $X[1/h]$ by the inductive hypothesis. We thus see that $X[1/hh']$ is $G(m)$-elementary, for some $m \ge n$ and some non-zero $G(m)$-invariant function $h'$ on $X[1/h]$, as required.
\end{proof}

\subsection{The core}

We now introduce the notion of core. Let $X$ be a $\GL$-variety.

\begin{definition}
A point $x \in X$ is \defn{elementary} if there exists an open elementary $G(n)$-subvariety of $X$ containing $x$, for some $n$. We let $X_{\elem}$ be the set of all elementary points. We define the \defn{core} of $X$, denoted $X_{\core}$ to be the complement of $X_{\elem}$.
\end{definition}

\begin{proposition} \label{prop:core}
The locus $X_{\elem}$ is $\GL$-stable, open, and dense. Hence $X_{\core}$ is a closed $\GL$-subvariety of $X$ that is nowhere dense.
\end{proposition}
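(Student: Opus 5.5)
Openness is immediate from the definition: if $x \in U$ with $U$ an open elementary $G(n)$-subvariety of $X$, then every point of $U$ is elementary. So $X_{\elem}$ is a union of open sets, hence open, and $X_{\core}$ is closed.

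For $\GL$-stability, let $g \in \GL$ and let $U$ be an open elementary $G(n)$-subvariety containing $x$. Since $\GL$ is the union of the $\GL_m(k)$, we may pick $m \ge n$ with $g \in \GL_m(k)$. Then $g$ commutes with $G(m)$, so $gU$ is $G(m)$-stable and contains $gx$.

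It remains to see that $gU$ is elementary as a $G(m)$-variety. Write $U \cong B \times \bA(V)$ as $G(n)$-varieties. Restricting to $G(m) \subset G(n)$ and shifting, $\bA(V) = \bA(V\{k^{m-n}\}) \times \bA(V')$, where $V\{k^{m-n}\}$ is the $G(m)$-invariant part (finite dimensional) and $V'$ is a finite length polynomial $G(m)$-representation. Hence $U \cong (B \times \bA(V\{k^{m-n}\})) \times \bA(V')$ is $G(m)$-elementary. Translating by $g$ is an isomorphism of $G(m)$-varieties because $g$ commutes with $G(m)$, so $gU$ is an open elementary $G(m)$-subvariety containing $gx$. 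Thus $X_{\elem}$ is $\GL$-stable, and $X_{\core}$ is a closed $\GL$-stable subset. We give it the reduced structure; it is a closed $\GL$-subvariety because the radical of a $\GL$-stable ideal is $\GL$-stable.

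Density is the main point. Let $X_1, \dots, X_r$ be the irreducible components of $X$. These are $\GL$-stable, since $\GL$ is connected in the relevant sense and permutes the finitely many components; this is standard from \cite{polygeom}. It suffices to show $X_{\elem}$ meets every non-empty open subset of each component.

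Set $X_1' = X_1 \setminus \bigcup_{j \ne 1} X_j$. This is a non-empty open $\GL$-stable subset of $X$, and we can choose a $\GL$-invariant function $h_0$ vanishing on $\bigcup_{j\ne1}X_j$ but not identically on $X_1$. Producing such a $\GL$-invariant $h_0$ is the step I expect to require care. If it is awkward, I would instead use a $G(n_0)$-invariant $h_0$, which exists by finite generation: take any function in the ideal of the other components that does not vanish on $X_1$, and note it is $G(n_0)$-invariant for some $n_0$.

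Apply Theorem~\ref{thm:shift} to $X_1$, shifted to $G(n_0)$. This gives a non-zero $G(n)$-invariant $h$ on $X_1$ such that $X_1[1/h]$ is elementary. Then $X_1[1/hh_0]$ is an open subset of $X$, since it avoids the other components, and it is elementary: localizing the base $B$ by the function $h_0$, which is invariant on the elementary piece after further enlarging $n$, keeps it elementary. It is also non-empty, since $X_1$ is irreducible and $h, h_0$ are non-zero on it.

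So $X_{\elem}$ meets each component. Since $X_{\elem}$ is open and $\GL$-stable, its intersection with each irreducible $X_1$ is a non-empty open subset, hence dense in $X_1$. Therefore $X_{\elem}$ is dense in $X$ and $X_{\core}$ is nowhere dense.
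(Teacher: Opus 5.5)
Your proof is correct and follows the paper's argument. Openness comes directly from the definition, $\GL$-stability holds because $g\in\GL_m(k)$ centralizes $G(m)\subset G(n)$, and density comes from the shift theorem applied to each irreducible component; you also spell out details the paper leaves implicit, namely that a $G(n)$-elementary variety is $G(m)$-elementary and that one must invert a function vanishing on the other components to get an open subset of $X$. Your $G(n_0)$-invariant fallback for $h_0$ is genuinely needed, not just a convenience: a $\GL$-invariant choice can fail to exist, e.g.\ for $X=(\bA(\bV)\times 0)\cup(0\times\bA(\bV))$ in $\bA(\bV\oplus\bV)$ the only $\GL$-invariant functions are constants.
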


\begin{proof}
If $x \in X_{\elem}$ then, by definition, there is an open elementary $G(n)$-subvariety $V$ of $X$ containing $x$. It follows that $V \subset X_{\elem}$, which shows that $X_{\elem}$ is open. Now suppose $g \in \GL$. Then for $m$ sufficiently large, $g$ centralizes $G(m)$ and $G(m)$ is contained in $G(n)$, and so $gV$ is a $G(m)$-stable open elementary subvariety. Since $gx \in gV$, we see that $gx \in X_{\elem}$, and so $X_{\elem}$ is $\GL$-stable. Finally, $X_{\elem}$ is dense by the shift theorem (Theorem~\ref{thm:shift}), applied separately to each irreducible component of $X$.
\end{proof}

\begin{example} \label{ex:fdcore}
Suppose $X$ is a finite dimensional variety with trivial $\GL$ action. Then $X$ is already elementary, and so the core of $X$ is empty.
\end{example}

One can iterate the core construction, i.e., take the core of $X$, then the core of that subvariety, and so on. This gives a strictly descending chain of closed $\GL$-subvarieties, which terminates by noetherianity. This essentially gives a canonical version of the locally elementary decomposition appearing in \cite[Corollary~7.9]{polygeom}; the one minor difference is that the elementary varieties used in \cite{polygeom} are required to be irreducible.

\subsection{The main result}

We now prove our main result on the core, following two lemmas. The following is simply \cite[Lemma~3.7]{CDD}, reformulated in the language of $\GL$-varieties. It is the key result.

\begin{lemma} \label{lem:core-sing-1}
Let $V$ be a finite length polynomial representation of $\GL$ concentrated in degrees $\le d$ and for which $V_d \ne 0$. Let $X$ be an irreducible closed $\GL$-subvariety of $\bA(V)$. Make the following assumption:
\begin{itemize}
\item[$(\ast)$] For every choice of decomposition $V=V' \oplus R$, with $R$ irreducible of degree $d$, the variety $X$ is non-cylindrical, i.e., not of the form $Y \times \bA(R)$ with $Y$ a closed subvariety of $\bA(V')$.
\end{itemize}
Let $Z$ be the closed $\GL$-subvariety of $X$ defined by the equations $\partial_v f =0$, where $f$ varies over all functions that vanish on $X$ and $v$ varies over all elements of $V_d$ that generate an irreducible subrepresentation. Then $Z$ is contained in $X_{\sing}$.
\end{lemma}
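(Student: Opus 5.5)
The plan is to show that at every point $z \in Z\{k^n\}$, for $n$ large, the Zariski tangent space of $X\{k^n\}$ at $z$ is too big. It will contain all of $V_d\{k^n\}$, and $\dim V_d\{k^n\}$ exceeds $\dim X\{k^n\}$ once $n \gg 0$. Since $X$ is irreducible, $X\{k^n\}$ is irreducible: it contains the image of $X$ under $\pi_n$ as a dense subset. So the local dimension of $X\{k^n\}$ at $z$ is at most $\dim X\{k^n\}$, and a strict inequality $\dim T_z X\{k^n\} > \dim X\{k^n\}$ forces $z$ to be singular. By the description of $X_{\sing}$ via $\pi_n$, this gives $Z \subset X_{\sing}$.

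\emph{Step 1 (tangent directions).} The ideal of $X\{k^n\}$ in $\bA(V)\{k^n\}$ is the $G(n)$-invariant part of the ideal of $X$, so every equation of $X\{k^n\}$ is some $f$ vanishing on $X$. Next, $V_d\{k^n\} = V_d^{G(n)}$ is spanned by elements $v$ lying in irreducible summands of $V_d$, and each such $v$ generates an irreducible subrepresentation. At $z \in Z\{k^n\}$ we have $\partial_v f(z)=0$ for all such $f$ and $v$. By linearity of $v \mapsto \partial_v f(z)$, the differentials of all equations of $X\{k^n\}$ then kill $V_d\{k^n\}$. Hence $V_d\{k^n\} \subset T_z X\{k^n\}$, and so $\dim T_z X\{k^n\} \ge \dim V_d\{k^n\}$.

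\emph{Step 2 (dimension bound).} Fix one decomposition $V = V' \oplus R$ with $R$ irreducible of degree $d$. By $(\ast)$, $X$ is non-cylindrical for it, so Theorem~\ref{thm:embedding}(b) gives $m$ and a non-zero $G(m)$-invariant $h$ with $X[1/h]$ embedding into $\bA(V' \oplus W)$, where $W$ is a polynomial $G(m)$-representation with all irreducible summands of degree $<d$. Since $X$ is irreducible and $h \ne 0$, the set $X[1/h]\{k^n\}$ is a non-empty open, hence dense, subset of $X\{k^n\}$ for $n \ge m$. Evaluating the embedding then gives
\[
\dim X\{k^n\} \le \dim V'\{k^n\} + \dim W\{k^{n-m}\} = \dim V\{k^n\} - \dim R\{k^n\} + O(n^{d-1}).
\]
Here $W$ is evaluated through the shift, which gives a polynomial in $n$ of degree $<d$. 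Now write $\dim V\{k^n\} = \dim V_d\{k^n\} + \dim V_{<d}\{k^n\}$ with $\dim V_{<d}\{k^n\} = O(n^{d-1})$, while $\dim R\{k^n\}$ is a polynomial of exact degree $d$ with positive leading coefficient. It follows that $\dim X\{k^n\} < \dim V_d\{k^n\}$ for $n \gg 0$. Combined with Step 1, this gives the strict inequality needed.

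The main obstacle is Step 2: we must make sure the embedding coming from case (b) genuinely yields a dimension bound on all of $X\{k^n\}$. This needs density of $X[1/h]\{k^n\}$ (from irreducibility) and the fact that evaluation of the $G(m)$-representation $W$ grows with degree $<d$ in $n$. Note that only one decomposition $V = V' \oplus R$ is needed here, although $(\ast)$ supplies all of them.
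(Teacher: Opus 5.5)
Your proof is correct, and it is essentially the argument of \cite[Lemma~3.7]{CDD} that the paper cites and sketches in the proof of Theorem~\ref{thm:core-embcodim}: at points of $Z\{k^n\}$ the tangent space of $X\{k^n\}$ contains every top-degree direction, while non-cylindricity and Theorem~\ref{thm:embedding} force $\dim X\{k^n\}$ to fall short of $\dim V_d\{k^n\}$ by a term of order $n^d$. Your direct use of Theorem~\ref{thm:embedding}(b) is cleaner than the paper's sketch, which takes the dense open set from Lemma~\ref{lem:core-sing-2}, itself derived from this lemma; the only loose end is $d=0$, where Theorem~\ref{thm:embedding} does not apply, but there $(\ast)$ gives $X \neq \bA(V)$, hence $\dim X < \dim V_0$, and your Step~1 finishes the argument.
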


In fact, the proof of the lemma shows that it is enough to know that $X$ is non-cylindrical for a single choice of decomposition, but this will not matter to us. We next show that one can upgrade the embedding theorem (Theorem~\ref{thm:embedding}) to obtain some control on the open set it produces. This same idea was used in \cite[\S 3.5]{CDD}.

\begin{lemma} \label{lem:core-sing-2}
Let $V$ be a finite length polynomial representation of $\GL$ concentrated in degrees $\le d$ and for which $V_d \ne 0$. Let $X$ be an irreducible closed $\GL$-subvariety of $\bA(V)$, and let $x \in X_{\ns}$. Then one of the following holds:
\begin{enumerate}
\item There exists a decomposition $V=V' \oplus R$, where $R$ is irreducible of degree $d$, for which $X$ is cylindrical, i.e., $X=Y \times \bA(R)$ for some closed $\GL$-subvariety $Y \subset \bA(V')$.
\item There exists a non-zero $G(n)$-invariant function $h$ on $X$ such that $X[1/h]$ contains $x$ and embeds into $\bA(W)$ for some finite length polynomial $G(n)$-representation with smaller magnitude than $V$.
\end{enumerate}
\end{lemma}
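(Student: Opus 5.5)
Suppose (a) fails. Then hypothesis $(\ast)$ of Lemma~\ref{lem:core-sing-1} holds: for every decomposition $V=V'\oplus R$ with $R$ irreducible of degree $d$, the variety $X$ is non-cylindrical. The plan is to use Lemma~\ref{lem:core-sing-1} to produce a suitable function $h$, and then feed it into Proposition~\ref{prop:embedding2}.

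\textbf{Choosing $h$.} Let $Z\subset X$ be the closed subvariety cut out by the functions $\partial_v f$, where $f$ ranges over the ideal of $X$ in $\Sym(V)$ and $v\in V_d$ generates an irreducible subrepresentation. By Lemma~\ref{lem:core-sing-1}, $Z\subset X_{\sing}$. Since $x\in X_{\ns}$, we have $x\notin Z$. So there exist such an $f$ and $v$ with $h=\partial_v f$ not vanishing at $x$; in particular $h$ is not identically zero on $X$. Choose $n$ so that $f$ and $v$ are $G(n)$-invariant. This is possible because $f$ is a single element of $\Sym(V)$, $v$ a single vector, and every element of a polynomial representation is fixed by $G(n)$ for $n\gg0$. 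Then $h$ is $G(n)$-invariant and $x\in X[1/h]$.

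\textbf{Applying the embedding result.} Let $R\subset V_d$ be the irreducible subrepresentation generated by $v$. By semisimplicity choose a complement, $V=V'\oplus R$, so that $\bA(V)=\bA(V')\times\bA(R)$. With $Y=\bA(V')$, the element $v$ lies in $R$ and $\partial_v f$ is exactly the partial derivative appearing in Proposition~\ref{prop:embedding2}. That proposition then says the $G(n)$-variety $X[1/h]$ embeds into $\bA(V')\times\bA(W)=\bA(V'\oplus W)$, where $W$ is a polynomial $G(n)$-representation whose irreducible summands all have degree $<d$.

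\textbf{Magnitude comparison.} It remains to check that $V'\oplus W$, viewed as a $G(n)$-representation, has smaller magnitude than $V$. We also need $W$ to have finite length; this is part of the conclusion of Proposition~\ref{prop:embedding2}, as in Theorem~\ref{thm:embedding}(b). Restricting $V'$ to $G(n)$ does not change its degree-$d$ length, by the observation in the subsection on magnitude. Also $W$ has nothing in degree $d$, so the degree-$d$ length of $V'\oplus W$ equals $\ell_d(V)-1$. Both representations vanish in degrees $>d$. Reverse lexicographic order compares the top degree first, so $V'\oplus W$ has strictly smaller magnitude, which gives (b).

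\textbf{Main obstacle.} The only real subtlety is making sure $h$ can be chosen nonvanishing at the given point $x$, rather than merely nonzero somewhere on $X$. This is exactly what Lemma~\ref{lem:core-sing-1} supplies, through $Z\subset X_{\sing}$ together with $x\notin X_{\sing}$. The remaining bookkeeping, namely the $G(n)$-invariance and the splitting off of $R$, is routine.
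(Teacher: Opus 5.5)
Your proposal is correct and follows the paper's proof essentially step for step. You negate (a) to get $(\ast)$, then use Lemma~\ref{lem:core-sing-1} together with $x \in X_{\ns}$ to find $h=\partial_v f$ not vanishing at $x$, split off the irreducible $R$ generated by $v$, and apply Proposition~\ref{prop:embedding2} with $Y=\bA(V')$. Your explicit magnitude check is also correct: restricting $V'$ to $G(n)$ keeps its degree-$d$ length at $\ell_d(V)-1$, and $W$ lives in degrees $<d$. The paper leaves this check implicit.
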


\begin{proof}
Suppose we are not in case~(a). This means condition $(\ast)$ from Lemma~\ref{lem:core-sing-1} holds. Let $Z$ be as in that lemma. Since $x$ is non-singular, we have $x \not\in Z$. This means there is some function $f$ on $\bA(V)$ that vanishes on $X$ and some $v \in V_d$ that generates an irreducible subrepresentation $R$ of $V$ such that $h=\partial_v f$ does not vanish at $x$. Let $V'$ be a complementary subrepresentation to $R$. We now apply Proposition~\ref{prop:embedding2}, with $Y=\bA(V')$. This produces the required embedding of $X[1/h]$.
\end{proof}

We are ready to prove Theorem~\ref{mainthm4}.

\begin{theorem}
\label{thm:core-embcodim}
Let $X$ be a $\GL$-variety. Then the core of $X$ is given as
\[
	X_{\core} = \{x \in X \mid \embcodim(\cO_{X,x}) = \infty\}.
\]
\end{theorem}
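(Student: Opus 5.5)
We prove the two inclusions separately.

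\emph{Elementary points have finite embedding codimension.} Let $x \in X_{\elem}$. Then there is an open elementary $G(n)$-subvariety $U = B \times \bA(V)$ containing $x$, with $B$ finite dimensional. Thus $\cO_{X,x}$ is a localization of $\cO(B) \otimes \Sym(V)$, i.e.\ of a polynomial ring $\cO(B)[t_i]_{i \in I}$ over a finitely generated $k$-algebra. Let $b$ be the image of $x$ in $B$. The plan is to show $\embcodim(\cO_{X,x}) \le \embcodim(\cO_{B,b}) < \infty$. To do this, write $\cO_{X,x}$ as a directed colimit of the local rings of $B \times \bA^m$ at the images of $x$. Each such ring is Noetherian, with embedding codimension equal to that of $\cO_{B,b}$ (adding free variables does not change it, including at non-closed points of the fiber). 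Then pass to the colimit as in Lemma~\ref{lem:colimit-regular}. More precisely, $\ker\gamma$ is the colimit of the kernels $\ker\gamma_m$, and the heights stay bounded. Alternatively, after choosing coordinates one can reduce to an explicit statement: the tangent cone is the cone of $B$ at $b$ times a linear space. I expect this direction to be routine.

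\emph{Core points have infinite embedding codimension.} We argue by induction on magnitude, exactly following the proof of the shift theorem (Theorem~\ref{thm:shift}) but tracking a chosen point. Let $x$ be a point with $\embcodim(\cO_{X,x}) < \infty$; we show $x$ is elementary. Embed $X \subset \bA(V)$. Elementarity is local, so we may replace $X$ by an irreducible component through $x$; to justify this, we first shrink away the other components via an invariant function, or argue componentwise. The key input is a strengthening of Lemma~\ref{lem:core-sing-1}: \emph{under $(\ast)$, every $x \in Z$ has $\embcodim(\cO_{X,x}) = \infty$.} Given this, Lemma~\ref{lem:core-sing-2} goes through with ``$x \in X_{\ns}$'' replaced by ``finite embedding codimension.'' We then have two cases. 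In the cylindrical case, $X = Y \times \bA(R)$ and $\cO_{X,x}$ is a localization of a polynomial extension of $\cO_{Y,y}$. The map $\cO_{Y,y} \to \cO_{X,x}$ is then submersive, and this transfers finiteness: the argument is as in Lemma~\ref{prop:reg-contract}, by comparing $\ker\gamma$ in a polynomial extension. Otherwise we pass to $X[1/h] \ni x$ inside $\bA(W)$ of smaller magnitude; the local ring there is unchanged. Induction then produces an elementary open neighborhood of $x$. The base case, $V$ in degree $0$, is finite dimensional and hence elementary (Example~\ref{ex:fdcore}).

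\emph{The main obstacle} is the strengthened form of Lemma~\ref{lem:core-sing-1}. The plan here is to mimic the argument of \cite[Lemma~3.7]{CDD}. At a point $x \in Z$, all partial derivatives $\partial_v f$ in the top degree $d$ direction vanish, where $v$ ranges over $R$ and $f$ over equations of $X$. Noncylindricity gives some $f$ that genuinely involves the $R$-coordinates. Using the $G(n)$-action for larger and larger $n$, we translate $f$ into infinitely many equations $f_1, f_2, \ldots$ involving disjoint sets of $R$-coordinates. Each $f_i$ has vanishing linear part in those coordinates at $x$ but a nonzero initial form of degree $\ge 2$. These initial forms are algebraically independent elements of $\ker\gamma$, since they involve disjoint variables. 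The linear forms of these variables are independent in $\fm/\fm^2$, because no equation kills them linearly. Together this forces $\operatorname{ht}(\ker\gamma) = \infty$. Alternatively, one can produce for each $N$ a finite-type Noetherian quotient and apply Proposition~\ref{prop:embcodim-map} to get $\embcodim \ge N$. The delicate part is ensuring that the initial forms remain nonzero after localization at a possibly non-closed point $x$. For this I would use genericity of the chosen coordinates together with the fact that $x$ lies in the generalized orbit of a $\GL$-fixed point.
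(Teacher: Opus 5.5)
Your architecture matches the paper's. You induct on magnitude and treat the cylindrical case and the case $x\notin Z$ (via Proposition~\ref{prop:embedding2}) the same way. You also correctly isolate the crux: under $(\ast)$, every $x\in Z$ has $\embcodim(\cO_{X,x})=\infty$. Your smooth-colimit and going-down arguments are acceptable substitutes for the paper's citations of \cite[Proposition 7.6]{CdFD}. The gap is in the crux itself, where the translation argument does not produce what you need.

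First, an equation that genuinely involves the $R$-coordinates can have a \emph{linear} initial form at $x$ in the $V_{<d}$-coordinates, and then it contributes nothing to $\ker\gamma$. Take coordinates $a_i$ on $\bV$ and $b_{ij}$ on $\Sym^2\bV$, and let $X=\{a=0\}\times\{\rank\le 1\}\subset\bA(\bV\oplus\Sym^2\bV)$. This $X$ is irreducible, satisfies $(\ast)$, and has $0\in Z$. Yet $f=a_1+b_{11}b_{22}-b_{12}^2$ has initial form $a_1$ at $0$, as does every translate, and nothing in your sketch selects a better $f$.

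Second, even degree-$\ge 2$ initial forms of the translates share the degree-$0$ coordinates. They also involve $V_{<d}$-coordinates subject to the linear relations cutting out $\fm_x/\fm_x^2$. So they need not lie in disjoint variables of $\Sym(\fm_x/\fm_x^2)$, and they can even vanish there. Algebraic independence alone bounds nothing: $(ty_1,ty_2,\ldots)$ has height $1$.

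Third, as you note yourself, you give no reason the initial forms survive at a point $x$ that is not $\GL$-fixed.

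The missing ingredient is a global count that forces the tangent cone to be small; the paper supplies it as follows. Write $x_n$ and $x'_n$ for the images of $x$ in $X\{k^n\}$ and $\bA(V_d)\{k^n\}$. Since $x\in Z$, the projection to $\bA(V_d)$ is submersive at $x$ and at every $x_n$. Hence the map $\fm_{x_n}/\fm_{x_n}^2\otimes\kappa(x)\to\fm_x/\fm_x^2$ has rank at least $\dim\cO_{\bA(V_d)\{k^n\},x'_n}$. Proposition~\ref{prop:embcodim-map}, applied to the Noetherian ring $\cO_{X\{k^n\},x_n}$, then gives $\embcodim(\cO_{X,x})\ge\dim\bA(V_d)\{k^n\}-\dim X\{k^n\}$.

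Non-cylindricity enters only through Lemma~\ref{lem:core-sing-2}: a dense open subset of $X$ embeds into some $\bA(W)$ of smaller magnitude. So this difference grows like a degree-$d$ polynomial in $n$. Your closing ``alternatively'' sentence points at this route, but the rank bound and the growth estimate are precisely its content.

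Separately, your reduction to an irreducible component through $x$ is not valid when $x$ lies on several components, and there the statement actually fails. Take $X=\Spec k[t,x_1,x_2,\ldots]/(tx_1,tx_2,\ldots)$ with $t$ invariant. At the origin, $\ker\gamma=(tx_1,tx_2,\ldots)$ has height $1$. But the origin is not elementary: in any $B\times\bA(W)$ the components are either all finite-dimensional or all infinite-dimensional, whereas here a line and a copy of $\bA(\bV)$ both pass through the origin. The paper's proof also tacitly assumes $X$ irreducible, through Lemmas~\ref{lem:core-sing-1} and~\ref{lem:core-sing-2}. So this is a caveat on the statement rather than a defect peculiar to your plan.
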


\begin{remark}
It is possible for a singular point of a $\GL$-variety to have any finite embedding codimension. Indeed, this can happen for ordinary finite dimensional varieties, and such varieties can be regarded as $\GL$-varieties with trivial $\GL$-action. The theorem asserts that any such point is elementary.
\end{remark}

\begin{remark}
\label{rem:gl-vs-arcs}
The reader should compare this statement with one of the main results in \cite{CdFD} in the setting of arc spaces. Let $X_\infty$ be the arc space of a variety $X$ over a perfect field $k$. Then $\alpha \in X_\infty$ satisfies $\embcodim(\cO_{X_\infty,\alpha}) < \infty$ if and only if $\alpha \notin (\Sing X)_\infty$. Following a result by Drinfeld, in \cite{Chi}, the latter condition was shown to imply to the existence of a morphism of formal schemes (up to finite separable extension of coefficient fields)
\[
	\Spf( \widehat{\cO_{X_\infty,\alpha}}) \cong \widehat{Y}_y \times \widehat{\bA^\infty}_z.
\]
Here $Y$ is a scheme of finite type and $\widehat{\bA^\infty}_z$ denotes the formal neighborhood of an infinite-dimensional affine space at some point $z$. Conversely, the existence of such a decomposition necessarily implies that $\embcodim(\cO_{X_\infty,\alpha})<\infty$. Summarizing, there is somewhat of a similarity between the statement of Theorem~\ref{thm:core-embcodim} and the above result for arc spaces: in both cases, finiteness of the embedding codimension determines the existence locally of a decomposition into a finite-dimensional (potentially singular) piece and an infinite-dimensional smooth piece. However, the situation of $\GL$-varieties is much more rigid, in that the decomposition is forced to exist Zariski-locally instead of just on the level of formal neighborhoods.
\end{remark}


\begin{proof}
If $x$ is elementary then $\embcodim(\cO_{X,x}) < \infty$ by \cite[Proposition 7.6]{CdFD}. Thus we are left to show that for every $x \in X_{\core}$ we have $\embcodim (\cO_{X,x}) = \infty$. We assume that $X$ is a closed  $\GL$-subvariety of some $\bA(V)$ for a polynomial representation $V$ and prove the statement by induction on the magnitude $m$ of $V$. If $m$ is concentrated in degree $0$ then the result trivially holds: the core of $X$ is empty and every point of $X$ has finite embedding codimension.

Now assume first that there exists a decomposition $V = V' \oplus R$ with $R$ irreducible of degree $d$ and such that $X$ is cylindrical with respect to this decomposition, that is $X = Y \times \bA(R)$ for some closed $\GL$-subvariety $Y \subset \bA(V')$. If $y$ denotes the image of $x$ in $Y$, then again by \cite[Proposition 7.6]{CdFD} we have
\[
	\embcodim(\cO_{X,x}) = \embcodim(\cO_{Y,y})
\]
and $x$ is elementary if and only if $y$ is. As the inclusion holds for $Y$ by induction, it therefore holds for $X$ as well.

Next, assume that $x$ is not in the $\GL$-subvariety $Z$ of Lemma~\ref{lem:core-sing-1}. That is, there exists a function $f$ vanishing on $X$ and an element $v$ of $V_d$ with the property that, if $h = \frac{\partial f}{\partial v}$, then $x \in X[1/h]$ and $X[1/h]$ embeds into $\bA(W)$ for some finite length polynomial $G(n)$-representation of smaller magnitude than $V$. As before, we have that the inclusion holds for $X[1/h]$ by induction, and clearly $x$ is elementary if and only if its image in $X[1/h]$ is. The embedding codimension being Zariski-local we have
\[
	\embcodim(\cO_{X,x}) = \embcodim(\cO_{X[1/h],x}),
\]
thus the inclusion holds in this case as well.

We are left with the case $x \in Z$. For convenience, given any $\GL$-variety $Y$ and $y \in Y$ we will write $Y_n = Y\{k^n\}$ and $y_n$ for the image of $y$ in $Y_n$.  Consider the decomposition $V = V_{<d} \oplus V_d$ and the induced projection map $X \to \bA(V_d)$. We first claim that the difference
\[
	\dim \bA(V_d)_n - \dim X_n
\]
grows polynomially of degree $d$ in $n$, so in particular becomes arbitrarily large for $n \gg 0$. Indeed, this is the main content of \cite[Lemma 3.7]{CDD}, but we sketch the argument for the convenience of the reader. By Lemma~\ref{lem:core-sing-2} there exists a dense open subset of $X$ which embeds into $\bA(W)$ for some finite-length polynomial representation $W$ of smaller magnitude than $V$. This implies that the difference 
\[
	\dim \bA(V_d)_n - \dim \bA(W)_n
\]
is polynomial of degree $d$ in $n$. Since $\dim \bA(W)_n$ bounds the dimension of $X_n$ the result follows.

Write $x'$ for the image of $x$ in $\bA(V_d)$ and similar $x_n$ and $x'_n$ for the images in $X_n$ and $\bA(V_d)_n$. Consider the diagram
\[
	\xymatrix@C=0.3em{ & 0 \ar[rr] && \fm_{x_n}/\fm_{x_n}^2 \otimes k(x) \ar[ld]^{\pi_n} \ar[rr] && \Omega_{X_n,x_n} \otimes k(x) \ar[ld]  \\
	0 \ar[rr] && \fm_x/\fm_x^2 \ar[rr] && \Omega_{X,x} \otimes k(x)  & \\
	& 0 \ar[rr]|!{[ul];[ur]}\hole && \fm_{x'_n}/\fm_{x'_n}^2 \otimes k(x) \ar[uu]_<<<<<<{\varphi_n}|\hole \ar[ld]^{\pi'_n} \ar[rr]|\hole && \Omega_{\bA(V_d)_n,x'_n} \otimes k(x) \ar[uu]_<<<<<<{\psi_n} \ar[ld] \\
			0 \ar[rr] && \fm_{x'}/\fm_{x'}^2 \otimes k(x) \ar[rr] \ar[uu]_<<<<<<{\varphi} && \Omega_{\bA(V_d),x'} \otimes k(x) \ar[uu]_<<<<<<{\psi}.  &}
\]
The condition that $x \in Z$ implies that all partial derivatives of the form $\frac{\partial f}{\partial v}$ for $f$ in the defining ideal of $X$ and $v\in V_d$ generating an irreducible subrepresentation of $V$ vanish at $x$. Since the base field $k$ is of characteristic $0$ we have that $V_d$ is the sum of its irreducible subrepresentations and hence $\bA(V_d)$ is the spectrum of a polynomial ring with variables given by all such $v$ as before. Choosing presentations for $\Omega_{\bA(V_d)}$ and $\Omega_X$ we conclude that $\psi$ and $\psi_n$ are injective. From the above diagram it follows that $\varphi$ and $\varphi_n$ are injective. Since $\pi'_n$ is clearly injective we have
\[
	\dim  \fm_{x'_n}/\fm_{x'_n}^2 \otimes k(x) = \dim \cO_{\bA(V_d)_n,x'_n}.
\] 
Therefore the linear map $\pi_n$ has rank at least $\dim \cO_{\bA(V_d)_n,x'_n}$. Applying \cite[Proposition 6.6]{CdFD} yields
\begin{multline*}
	\embcodim(\cO_{X,x}) \geq \dim \cO_{\bA(V_d)_n,x'_n} - \dim \cO_{X_n,x_n} = \\ = \dim (\bA(V_d)_n) - \dim(X_n) + \underbrace{\dim(\overline{\{x_n\}}) - \dim(\overline{\{x'_n\}})}_{\geq 0},
\end{multline*}
and so $\embcodim(\cO_{X,x}) = \infty$ by the discussion above.
\end{proof}

From the characterization provided by Theorem~\ref{thm:core-embcodim} we immediately obtain that the core consists only of singular points.

\begin{corollary}
\label{thm:core-sing}
Let $X$ be a $\GL$-variety. Then $X_{\core} \subseteq X_\sing$.
\end{corollary}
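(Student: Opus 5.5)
We need to show that if $x \in X_{\core}$ then $x \in X_{\sing}$. The plan is to prove the contrapositive: a non-singular point is not in the core. The problem is that non-singularity is defined via the finite levels $X\{k^n\}$, while Theorem~\ref{thm:core-embcodim} characterizes the core by $\embcodim(\cO_{X,x})=\infty$. So the most direct route is to show that non-singular points have finite (indeed zero) embedding codimension.

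First, suppose $x \in X_{\ns}$. Then for all $n \gg 0$ the image $x_n=\pi_n(x)$ is a non-singular point of the finite dimensional variety $X\{k^n\}$, by the discussion following the definition of $X_{\sing}$. In characteristic~$0$ this means $\cO_{X\{k^n\},x_n}$ is a regular Noetherian local ring. Since $A=\bigcup_n A\{k^n\}$, we get $\cO_{X,x}$ as the directed colimit of the local rings $\cO_{X\{k^n\},x_n}$. Here we localize $A\{k^n\}$ at the contraction of the prime of $x$; these contractions are compatible, so the colimit of the localizations is the localization of the colimit. The transition maps are local homomorphisms.

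Next, Lemma~\ref{lem:colimit-regular} then gives that $\cO_{X,x}$ is regular, i.e.\ $\embcodim(\cO_{X,x})=0<\infty$. By Theorem~\ref{thm:core-embcodim}, $x\notin X_{\core}$. This gives $X_{\core}\subseteq X_{\sing}$.

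The only point needing care is the claim that regularity of the Noetherian local ring $\cO_{X\{k^n\},x_n}$ agrees with Definition~\ref{defn:regular}. This is the classical fact that a Noetherian local ring is regular if and only if $\gr$ is the symmetric algebra on $\fm/\fm^2$. A second point is that the colimit of the $\fm_n$ is $\fm$, which is immediate from the compatibility of the primes. I do not expect a serious obstacle. Should the colimit argument cause trouble, an alternative is to use Lemma~\ref{lem:core-sing-2} directly and induct on magnitude, mirroring the proof of Theorem~\ref{thm:core-embcodim}, to produce an elementary neighborhood of any non-singular point.
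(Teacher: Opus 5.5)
Your proposal is correct and follows the paper's proof essentially verbatim: non-singularity gives regularity of $\cO_{X\{k^n\},\pi_n(x)}$ for $n \gg 0$, Lemma~\ref{lem:colimit-regular} applied to the directed colimit gives $\embcodim(\cO_{X,x})=0$, and Theorem~\ref{thm:core-embcodim} then places $x$ outside the core. The extra points you check are exactly the routine details the paper leaves implicit, namely the compatibility of the contracted primes and the classical equivalence of Noetherian regularity with $\gr$ being the symmetric algebra on $\fm/\fm^2$.
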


\begin{proof}
Let $x\in X_{\ns}$, which means that there exists $n_0$ such that $X\{k^n\}$ is regular at $\pi_n(x)$ for all $n \geq n_0$, where $\pi_n \colon X \to X\{k^n\}$. Now the local ring $\cO_{X,x}$ is the directed colimit of the local rings $\cO_{X\{k^n\},\pi_n(x)}$ and hence the claim follows from Lemma~\ref{lem:colimit-regular} together with Theorem~\ref{thm:core-embcodim}.
\end{proof}

Corollary~\ref{thm:core-sing} in turn implies the following important result about the local structure of a $\GL$-variety at a non-singular point. Note that this establishes (a) $\Rightarrow$ (f) in Theorem~\ref{mainthm}.

\begin{corollary} \label{cor:core-sing}
A non-singular point $x$ of a $\GL$-variety $X$ admits a $G(n)$-stable open elementary neighborhood that is itself non-singular, for some $n$.
\end{corollary}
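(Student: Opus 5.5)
By Corollary~\ref{thm:core-sing}, a non-singular point $x$ is not in $X_{\core}$, so it is elementary. Hence there is some $n$ and a $G(n)$-stable open subvariety $U \subset X$ containing $x$ with $U \cong B \times \bA(V)$, where $B$ is a finite dimensional variety and $V$ is a finite length polynomial $G(n)$-representation. We may take $U$ affine of the form $X[1/h]$ with $h$ a $G(n)$-invariant function, as is the case for the open sets produced by the shift theorem. The plan is to shrink $U$ further so that it becomes non-singular while staying elementary.

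The non-singular locus $X_{\ns}$ is open, dense and $\GL$-stable, hence $G(n)$-stable. So $U \cap X_{\ns}$ is a $G(n)$-stable open neighborhood of $x$. Under the identification $U = B \times \bA(V)$, we would like to show that the singular locus of $U$, regarded as a $G(n)$-variety, has the form $B_{\sing} \times \bA(V)$. We compute via evaluation. Identifying $G(n)$ with $\GL$ (shifting), the evaluations of $U$ are $U\{k^m\} = B \times \bA(V)\{k^m\}$, which is the product of $B$ with an affine space. Its singular locus is therefore $B_{\sing} \times \bA(V)\{k^m\}$ for every $m$. So $U_{\sing} = B_{\sing} \times \bA(V)$ for the shifted structure.

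Two compatibility points must be checked. First, the singular locus of $X$ computed with respect to $\GL$ restricts on $U$ to the singular locus computed with respect to $G(n)$. Second, singularity is local, so $(X_{\sing}) \cap U = U_{\sing}$. For the first point, I would use the pointwise characterization from \S2: $x$ is singular if and only if $\pi_m(x)$ is singular in $X\{k^m\}$ for $m \gg 0$. Moreover, $X\{k^{m+n}\}$ and the $G(n)$-shifted evaluation at $k^m$ coincide, namely $A^{G(n+m)}$ in both cases. Openness under localization at the $G(n)$-invariant $h$ is handled because $A[1/h]^{G(N)} = A^{G(N)}[1/h]$ for $N \ge n$, so the evaluated varieties of $U$ are open subvarieties of those of $X$. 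I expect this bookkeeping to be the main subtle step, though it is routine.

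Given this, $x \in U \setminus U_{\sing}$ means that the image $b$ of $x$ in $B$ is a smooth point of $B$. Choose a function $g$ on $B$ with $g(b) \neq 0$ such that $B[1/g]$ is smooth; this is possible since $B_{\sing}$ is closed. Then $U[1/g] = B[1/g] \times \bA(V)$ is again elementary, it is $G(n)$-stable because $g$ is $G(n)$-invariant, it is open in $X$, and it contains $x$. By the computation above its singular locus is empty. This gives the required non-singular open elementary neighborhood.
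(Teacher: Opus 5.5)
Your proposal is correct and follows essentially the same route as the paper: you get an elementary neighborhood $U=B\times\bA(V)$ from Corollary~\ref{thm:core-sing} and then shrink $B$ to a smooth open around the image of $x$, and your evaluation argument fills in the step the paper leaves implicit, namely that $X_{\sing}\cap U=B_{\sing}\times\bA(V)$. The only loosely justified claim is that $U$ may be taken to be a principal open $X[1/h]$, which the definition of an elementary point does not provide; this is harmless, because if $x\in X[1/f]\subset U$ with $f$ invariant under $G(N)$ for some $N\ge n$, then as a $G(N)$-variety $U=(B\times\bA(V_0))\times\bA(V_+)$ with $V_0=V^{G(N)}$ finite dimensional, and $f$ is a function on $B\times\bA(V_0)$, so $X[1/f]$ is again elementary.
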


\begin{proof}
By Theorem~\ref{thm:core-sing}, $x$ belongs to the elementary locus $X_{\elem}$, and thus admits a $G(n)$-stable open neighborhood $U$ that is elementary as a $G(n)$-variety. Write $U=B \times \bA(V)$, where $B$ is a finite dimensional variety with trivial $G(n)$-action and $V$ is a finite length polynomial representation of $G(n)$. Since $x$ is a non-singular point of $U$, its $B$ component must be a non-singular point of $B$. Thus, replacing $B$ with an appropriate affine open, we can assume that $B$ is non-singular, and then $U$ is as well.
\end{proof}

%
%

\section{The main theorem}

In this section, we prove most of Theorem~\ref{mainthm}.

\subsection{Finite generation of ideals}

The preeminent question in the theory of $\GL$-algebras is noetherianity: is every $\GL$-ideal of a finitely $\GL$-generated $\GL$-algebra necessarily finitely $\GL$-generated? We cannot prove this, but we observe that we can obtain a useful result in this direction as a consequence of the shift theorem. We first require a lemma. We say that a $\GL$-algebras is \defn{finitely $\GL$-presented} if it has the form $S/\fa$ where $S$ is the symmetric algebra on a finite length polynomial representation and $\fa$ is a finitely $\GL$-generated ideal of $S$.

\begin{lemma} \label{lem:fp}
Let $f \colon A \to B$ be a surjective homomorphism of $\GL$-algebras, with $A$ finitely $\GL$-generated and $B$ finitely $\GL$-presented. Then $\ker(f)$ is finitely $\GL$-generated.
\end{lemma}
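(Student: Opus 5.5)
This is the $\GL$-equivariant version of the classical fact that for a finitely generated algebra $A$ surjecting onto a finitely presented algebra $B$, the kernel is finitely generated. I will follow the classical argument.

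First, present both algebras. Since $A$ is finitely $\GL$-generated, choose a surjection $\Sym(U) \to A$ with $U$ a finite length polynomial representation. Since $B$ is finitely $\GL$-presented, write $B = S/\fa$ with $S=\Sym(V)$, $V$ of finite length, and $\fa$ finitely $\GL$-generated. The composite $\Sym(U) \to A \to B$ is surjective. By semisimplicity of polynomial representations, the surjection $\Sym(U)\to B$ restricted to $U$ lifts along $S \to B$ to an equivariant map $U \to S$. Likewise $V \to B$ lifts along $\Sym(U) \to B$ to an equivariant map $V \to \Sym(U)$. This gives $\GL$-algebra maps $\sigma\colon \Sym(U) \to S$ and $\tau \colon S \to \Sym(U)$ over $B$.

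Next, compute the kernel $\fc$ of $\Sym(U) \to B$. I claim $\fc$ is generated by two pieces. The first is $\tau(\fa)$, which is finitely $\GL$-generated since $\fa$ is. The second is the elements $u - \tau(\sigma(u))$ for $u \in U$; these are the image of the finite length representation $U$, so they give finitely many $\GL$-orbits of generators. Both pieces lie in $\fc$ because $\sigma$ and $\tau$ are maps over $B$.

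For the reverse inclusion, let $\fc'$ be the ideal generated by these two pieces. Modulo $\fc'$, every element $g \in \Sym(U)$ is congruent to $\tau(\sigma(g))$, since $g$ is a polynomial in elements of $U$. If $g \in \fc$, then $\sigma(g) \in \fa$, because $\sigma$ is over $B$. Hence $\tau(\sigma(g)) \in \tau(\fa) \subset \fc'$, and so $g \in \fc'$. Thus $\fc=\fc'$ is finitely $\GL$-generated.

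Finally, $\ker(f)$ is the image of $\fc$ under the surjection $\Sym(U) \to A$. The image of a finitely $\GL$-generated ideal under a surjection of $\GL$-algebras is finitely $\GL$-generated, since the images of the generators generate. So $\ker(f)$ is finitely $\GL$-generated.

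The only step needing care is the existence of the equivariant lifts $U \to S$ and $V\to \Sym(U)$. These follow because surjections of polynomial representations split, which holds since the category of polynomial representations is semisimple in characteristic~0. Beyond that, the argument is formal.
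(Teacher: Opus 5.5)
Your proof is correct and follows essentially the paper's argument. You use semisimplicity to get equivariant algebra maps over $B$ in both directions, show that the kernel of the composite surjection from a free $\GL$-algebra onto $B$ is generated by the transported presentation ideal together with the finitely many $\GL$-orbits of relations $u-\tau\sigma(u)$, and then push this kernel forward onto $\ker(f)$. The only difference is packaging: the paper computes this kernel on $S \otimes_k R$ by comparing its two factorizations through $S$ and $R$, whereas you work directly in $\Sym(U)$ with the endomorphism $\tau\sigma$.
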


\begin{proof}
Choose a surjection $a \colon S \to A$ where $S$ is the symmetric algebra on a finite length polynomial representation $V$, which exists since $A$ is finitely $\GL$-generated. Also choose a surjection $b \colon R \to B$, where $R$ is the symmetric algebra on a finite length polynomial representation $W$ and $\ker(b)$ is finitely $\GL$-generated, which exists since $B$ is finitely $\GL$-presented. Since polynomial representations are semi-simple, the map $b \colon W \to B$ lifts through the surjection $fa$. This gives us a map of $\GL$-algebras $i \colon R \to S$ such that $fai = b$. Similarly, we have a map of $\GL$-algebras $j \colon S \to R$ such that $bj= fa$. Consider the diagram
\begin{displaymath}
\xymatrix@C=4em{
S \otimes_k R \ar[r]^-{\id \otimes i} \ar[d]_{j \otimes \id} & S \ar[r]^a & A \ar[d]^f \\
R \ar[rr]^b && B }
\end{displaymath}
This diagram commutes, and all maps are surjections of $\GL$-algebras. It suffices to show that the kernel $\fc$ of the composition $fa(\id \otimes i)$ is finitely $\GL$-generated, since this surjects onto the kernel of $f$. Since the diagram commutes, $\fc$ is equal to the kernel of $b(j \otimes id)$. This is finitely $\GL$-generated since both $\ker(b)$ and $\ker(j \otimes \id)$ are; indeed, $\ker(j \otimes \id)$ is generated by the image of the map $V \to S \otimes_k R$ given by $x \mapsto x \otimes 1 - 1 \otimes j(x)$.
\end{proof}

The following is our result on finite generation of ideals.

\begin{proposition} \label{prop:fg}
Let $A$ be a finitely $\GL$-generated $\GL$-algebra and let $\fp$ be a $\GL$-stable prime ideal of $A$. Then there exists an integer $n \ge 0$ and a $G(n)$-invariant function $h \in A \setminus \fp$ such that the extension of $\fp$ to $A[1/h]$ is finitely $G(n)$-generated.
\end{proposition}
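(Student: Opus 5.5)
Apply the shift theorem (Theorem~\ref{thm:shift}) to the $\GL$-variety $\Spec(A/\fp)$, and then use Lemma~\ref{lem:fp} to pass finite presentation back up to $A$. Since $\fp$ is prime, $A/\fp$ is a reduced (indeed integral), finitely $\GL$-generated $\GL$-algebra, so $Y=\Spec(A/\fp)$ is a non-empty $\GL$-variety. Theorem~\ref{thm:shift} then gives an integer $n$ and a non-zero $G(n)$-invariant element $\bar h \in A/\fp$ such that $Y[1/\bar h] \cong B \times \bA(V)$. Here $B$ is a finite dimensional affine variety with trivial $G(n)$-action, and $V$ is a finite length polynomial representation of $G(n)$.

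Next I lift $\bar h$ to $A$. The functor $(-)^{G(n)}$ is exact on polynomial representations, so $A^{G(n)} \to (A/\fp)^{G(n)}$ is surjective. Hence $\bar h$ lifts to a $G(n)$-invariant $h \in A$, and $h \notin \fp$ because $\bar h \ne 0$.

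Now consider the surjection of $G(n)$-algebras
\[
f \colon A[1/h] \to A[1/h]/\fp A[1/h] = (A/\fp)[1/\bar h] \cong \cO(B) \otimes \Sym(V).
\]
Two hypotheses of Lemma~\ref{lem:fp} must be checked, working over $G(n)\cong\GL$ throughout.

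\emph{The source is finitely $G(n)$-generated.} $A$ is finitely $\GL$-generated, and after restriction to $G(n)$ it is still finitely $G(n)$-generated, since a finite length $\GL$-representation restricts to a finite length $G(n)$-representation. Adjoining $1/h$, which is a single invariant element, preserves finite generation.

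\emph{The target is finitely $G(n)$-presented.} $\cO(B)$ is a finitely generated $k$-algebra with trivial action. By Hilbert's basis theorem it is a quotient of a polynomial ring $k[y_1,\dots,y_r]$ by a finitely generated ideal, and the $y_i$ may be treated as degree-$0$ generators. Hence $\cO(B)\otimes\Sym(V)$ is a quotient of $\Sym(k^r \oplus V)$ by the ideal generated by finitely many degree-$0$ elements.

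Lemma~\ref{lem:fp} then shows that $\ker(f)=\fp A[1/h]$ is finitely $G(n)$-generated. Its proof goes through for $G(n)$-algebras, since all it uses is semisimplicity of polynomial representations.

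The step most likely to be subtle is the reinterpretation over $G(n)$, specifically whether a localization at an invariant element counts as a finitely $G(n)$-generated $G(n)$-algebra in the required sense. Writing $A[1/h] = A[t]/(th-1)$, with $t$ a degree-$0$ generator, makes this explicit. This is also where the shift theorem supplies all the content, since it lets the finitely presented quotient replace the unknown noetherianity.
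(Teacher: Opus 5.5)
Your proposal is correct and follows the paper's proof essentially step for step. Like the paper, you apply the shift theorem to $A/\fp$, lift $\bar h$ to a $G(n)$-invariant $h\in A\setminus\fp$, note that $(A/\fp)[1/\bar h]$ is finitely $G(n)$-presented because it is elementary, and apply Lemma~\ref{lem:fp} to the surjection $A[1/h]\to(A/\fp)[1/\bar h]$, whose kernel is $\fp A[1/h]$; your extra checks (surjectivity on $G(n)$-invariants, finite generation after restricting to $G(n)$ and inverting $h$) are details the paper leaves implicit.
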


\begin{proof}
Let $B=A/\fp$. By the shift theorem (Theorem~\ref{thm:shift}), there is a non-zero $G(n)$-invariant element $\ol{h} \in B$, for some $n$, such that $\Spec(B[1/\ol{h}])$ is an elementary $G(n)$-variety. This means that $B[1/\ol{h}] = B_1 \otimes_k B_2$, where $B_1$ is a finitely generated $k$-algebra (with trivial $G(n)$ action) and $B_2$ is the symmetric algebra on a finite length polynomial representation. It follows that $B[1/\ol{h}]$ is finitely $G(n)$-presented. Let $h \in A$ be any $G(n)$-invariant lift of $\ol{h}$. Then the map $A[1/h] \to B[1/\ol{h}]$ is a surjection of $G(n)$-algebras with kernel $\fp[1/h]$. The result now follows from Lemma~\ref{lem:fp}.
\end{proof}

\subsection{Regular points are non-singular} \label{ss:reg-ns}

Fix a $\GL$-variety $X$. Our goal at the moment is to establish (b) $\Rightarrow$ (a) in Theorem~\ref{mainthm}, i.e., that if $x \in X$ is a point with $\cO_{X,x}$ regular then $x \in X_{\ns}$. We set-up some notation. Let $A$ be the coordinate ring of $X$. Write $X_n$ and $A_n$ for the evaluations on $k^n$, and similarly for ideals of $A$. Recall that a map of schemes is submersive at a point if the map on Zariski cotangent spaces is injective (Definition~\ref{defn:subm}).

\begin{lemma} \label{lem:mainthm-1}
Suppose $x \in X$ is $\GL$-invariant. Then there exists $n_0$ such that the map $X \to X\{k^n\}$ is submersive at $x$, for all $n \ge n_0$.
\end{lemma}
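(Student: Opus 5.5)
The plan is to write $A$ for the coordinate ring of $X$, $\fp \subset A$ for the $\GL$-stable prime corresponding to $x$, and $B = A/\fp$, an integral $\GL$-algebra. We have $A_n = A^{G(n)}$, and the image $x_n$ of $x$ in $X_n$ corresponds to $\fp_n = \fp \cap A_n = \fp^{G(n)}$. We reduce the statement to the injectivity of
\begin{displaymath}
\kappa(x) \otimes_{\kappa(x_n)} (\fp_n/\fp_n^2) \otimes_{B_n} \kappa(x_n) \longrightarrow (\fp/\fp^2) \otimes_B \kappa(x),
\end{displaymath}
where $\kappa(x)=\operatorname{Frac}(B)$ and $\kappa(x_n)=\operatorname{Frac}(B_n)$. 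This map is exactly the map on Zariski cotangent spaces, because localizing at a prime and then reducing modulo the maximal ideal is the same as tensoring with the residue field. Put $M = \fp/\fp^2$, which is a $B$-module in the category of polynomial representations.

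\emph{Step 1: invariants commute with the square.} We claim that $\fp_n/\fp_n^2 = M^{G(n)}$. The ideal $\fp^2$ is the image of the multiplication map $\fp \otimes_k \fp \to A$. Since $(-)^{G(n)}$ is exact and compatible with tensor products on polynomial representations (this is just evaluation at $k^n$), we get $(\fp^2)^{G(n)} = \operatorname{im}(\fp_n \otimes \fp_n \to A_n) = \fp_n^2$. Exactness then gives $M^{G(n)} = \fp_n/\fp_n^2$. Consequently it suffices to show that, for $n \gg 0$, the natural map $M^{G(n)} \otimes_{B_n} \kappa(x) \to M \otimes_B \kappa(x)$ is injective. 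Note that $\kappa(x)\otimes_{\kappa(x_n)}(-\otimes_{B_n}\kappa(x_n)) = -\otimes_{B_n}\kappa(x)$.

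\emph{Step 2: make $M$ free.} We first need $M$ to be finitely generated, at least after shifting and localizing. By Proposition~\ref{prop:fg}, there is some $m$ and a $G(m)$-invariant $h_1 \notin \fp$ such that $\fp[1/h_1]$ is finitely $G(m)$-generated. Hence $M[1/h_1]$ is a finitely $G(m)$-generated $B[1/h_1]$-module, and $B[1/h_1]$ is integral. Applying generic freeness (Theorem~\ref{thm:genfree}) with $G(m)$ in place of $\GL$, we get $n_0 \ge m$, a nonzero $G(n_0)$-invariant $h_2 \in B$, and a finite length polynomial $G(n_0)$-representation $W$ with
\begin{displaymath}
M[1/h] \cong W \otimes_k B[1/h], \qquad h = h_1h_2,
\end{displaymath}
$G(n_0)$-equivariantly. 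Since $h$ is invariant and nonzero in $B$, it lies in $B_n \setminus 0$ for all $n \ge n_0$. Therefore tensoring with $\kappa(x)$, respectively $\kappa(x_n)$, factors through inverting $h$, and inverting $h$ commutes with taking $G(n)$-invariants.

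\emph{Step 3: conclude.} For $n \ge n_0$, compatibility of invariants with tensor products gives
\begin{displaymath}
M[1/h]^{G(n)} = W^{G(n)} \otimes_k B[1/h]^{G(n)} = W^{G(n)} \otimes_k B_n[1/h],
\end{displaymath}
a free $B_n[1/h]$-module on the subspace $W^{G(n)} \subset W$. Tensoring with $\kappa(x)$, the map in Step 1 becomes the map $W^{G(n)} \otimes_k \kappa(x) \to W \otimes_k \kappa(x)$ induced by the inclusion $W^{G(n)} \subset W$, and this is injective. This proves submersiveness for all $n \ge n_0$.

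I expect the main obstacle to be Step 2. One has to check carefully that the isomorphism from generic freeness is compatible with the identifications in Step 1. The relevant identifications are the $B$-module structure and the fact that taking $G(n)$-invariants of $W \otimes B[1/h]$ splits as a tensor product, which holds because $W$ is a $G(n_0)$-representation and $n \ge n_0$. One must also check that the finite-generation input from Proposition~\ref{prop:fg} really lets generic freeness be applied to $M$ rather than to $\fp$ itself. This works because $M[1/h_1]$ is a quotient of $\fp[1/h_1]$ and hence is finitely $G(m)$-generated over $B[1/h_1]$.
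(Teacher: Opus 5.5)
Your proposal is correct and is essentially the paper's proof: use Proposition~\ref{prop:fg} to make $\fp$ finitely generated after shifting and inverting an invariant outside $\fp$, apply generic freeness to $\fp/\fp^2$ over $B=A/\fp$, and read off injectivity from $W^{G(n)}\otimes_k\kappa(x)\to W\otimes_k\kappa(x)$. Your Step~1 identification $\fp_n/\fp_n^2=(\fp/\fp^2)^{G(n)}$ just makes explicit what the paper's commutative diagram uses implicitly. One cosmetic point: the $h_2$ produced by generic freeness a priori lies in $B[1/h_1]$, but multiplying it by a power of $h_1$ puts it in $B$, so nothing changes.
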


\begin{proof}
Let $\fp$ be the prime ideal of $A$ corresponding to $x$. We are free to shift, since that only affects the resulting value of $n_0$. We are also free to invert elements of $A \setminus \fp$, since this does not change the local ring at $\fp$. We may thus assume that $\fp$ is finitely $\GL$-generated (Proposition~\ref{prop:fg}).

Let $B=A/\fp$, so that $\fp/\fp^2$ is a finitely $\GL$-generated $B$-module. By generic freeness (Theorem~\ref{thm:genfree}), we can shift and localize such that $\fp/\fp^2$ is equivariantly free as a $B$-module, i.e., isomorphic to a module of the form $B \otimes_k V$ for some finite length polynomial representation $V$. For any $n \ge 0$, we have a commutative diagram
\begin{displaymath}
\xymatrix{
\kappa(\fp) \otimes_{A_n} \fp_n/\fp_n^2 \ar[r] \ar@{=}[d] &
\kappa(\fp) \otimes_A \fp/\fp^2 \ar@{=}[d] \\
\kappa(\fp) \otimes_k V_n \ar[r] & \kappa(\fp) \otimes_k V }
\end{displaymath}
and so the top map is injective, as required.
\end{proof}

Suppose $X$ is a closed $\GL$-subvariety of $\bA(V)$. Then the jet space $J_m(X)$ is, by construction, a closed subscheme of $\bA(V^{\oplus (m+1)})$ that is $\GL$-stable. It is thus the spectrum of a $\GL$-algebra that is finitely $\GL$-generated over $k$. It follows that the reduced subscheme of $J_m(X)$ is a $\GL$-variety. This is a very useful observation, since it allows us to apply the theory of $\GL$-varieties to the study of jets on $\GL$-varieties. The following lemma is proved using this strategy.

\begin{lemma} \label{lem:mainthm-2}
Let $x,y \in X$ belong to the same generalized $\GL$-orbit. Then:
\begin{enumerate}
\item $\cO_{X,x}$ has the jet lifting property if and only if $\cO_{Y,y}$ does.
\item $\cO_{X,x}$ is regular if and only if $\cO_{X,y}$ is regular.
\end{enumerate}
\end{lemma}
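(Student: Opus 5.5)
The plan is to prove (a) and deduce (b) from it. By Proposition~\ref{prop:compare}, a local $k$-algebra is regular if and only if it has the jet lifting property at its maximal ideal. So (b) is a restatement of (a), and everything reduces to showing that the set
\[
F=\{z\in X : \cO_{X,z} \text{ fails the jet lifting property}\}
\]
is a union of generalized $\GL$-orbits.

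To see $F$ geometrically, embed $X$ as a closed $\GL$-subvariety of some $\bA(V)$. As observed just before the lemma, each $J_m(X)$ is a $\GL$-stable closed subscheme of $\bA(V^{\oplus(m+1)})$, and its reduction is a $\GL$-variety. The truncation maps $\rho_m\colon J_{m+1}(X)\to J_m(X)$ and the projections $p_m\colon J_m(X)\to X$ are $\GL$-equivariant. Passing to reductions changes nothing, since all the conditions are about underlying points.

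The key translation is that a scheme-theoretic point $\xi\in J_m(X)$ with $p_m(\xi)=z$ is the same as an $m$-jet centered at $z$ with values in $\kappa(\xi)$. Moreover, $\xi$ lies in $\rho_m(J_{m+1}(X))$ if and only if that jet lifts to an $(m+1)$-jet over some extension field of $\kappa(\xi)$. Conversely, every $m$-jet centered at $z$ valued in some field $K$ factors through such a point $\xi$ with $\kappa(\xi) \subset K$, and it lifts over $K$ if and only if it lifts over $\kappa(\xi)$. Both directions follow from Proposition~\ref{prop:jlp-field}. Hence
\[
F=\bigcup_{m\ge 0} p_m\bigl(J_m(X)\setminus \rho_m(J_{m+1}(X))\bigr).
\]

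Next I would show that each term $F_m$ of this union is a union of generalized orbits. For this I invoke the Chevalley-type theorem for $\GL$-varieties from \cite{polygeom}: the image of a morphism of $\GL$-varieties is $\GL$-constructible, i.e.\ a finite union of $\GL$-stable locally closed subsets. It follows that $\rho_m(J_{m+1}(X))$ is $\GL$-constructible, hence so is its complement $C_m$, and hence so is $F_m=p_m(C_m)$, by applying the image theorem once more to $C_m$ viewed as a finite union of locally closed $\GL$-subvarieties. As recalled in \S\ref{ss:glvar}, every $\GL$-stable locally closed set is a union of generalized orbits. Therefore so is each $F_m$, and so is the infinite union $F$, since unions of unions of orbits remain unions of orbits. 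In particular $x\in F$ if and only if $y\in F$, which proves (a).

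The main obstacle is this constructibility step. I need that complements and images preserve the class of $\GL$-stable constructible sets, where constructible means a finite union of $\GL$-stable locally closed pieces, not merely a $\GL$-stable set that happens to be constructible. If the image theorem in \cite{polygeom} is stated only for closed points or in the $\Vec$-variety language, I would first transport it to scheme-theoretic points using the inverse-limit description $X=\varprojlim X\{k^n\}$. A more direct fallback avoids images altogether: use that the generic point $w$ of $\overline{\GL x}$ specializes to $x$, so $\overline{\GL w}$ contains $x$. Then reduce to showing that $F$ is stable under specialization within a generalized orbit. This comes down to the observation that any $\GL$-stable closed set containing one point of a generalized orbit contains the whole orbit.
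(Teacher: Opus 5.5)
Your proposal is correct and is essentially the paper's proof. The paper also writes the failure locus as $\bigcup_m p_m\bigl(J_m(X)\setminus\rho_m(J_{m+1}(X))\bigr)$ on the reduced jet schemes, uses the $\GL$-version of Chevalley's theorem to see that each piece is $\GL$-constructible and hence a union of generalized orbits, and deduces (b) from (a) via Proposition~\ref{prop:compare}; your use of Proposition~\ref{prop:jlp-field} to match points of $J_m(X)$ with jets fills in a step the paper leaves implicit. The fallback in your last paragraph is unnecessary, and on its own it would not work: the fact about $\GL$-stable closed sets says nothing about $F$ until you know $F$ is closed or $\GL$-constructible, and that is exactly what the Chevalley step supplies.
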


\begin{proof}
Let $W_m \subset X$ be the set of points such that every $m$-jet at $x$ lifts to an $(m+1)$-jet. Consider the maps
\begin{displaymath}
\xymatrix@C=3em{
J_{m+1}(X) \ar[r]^-{f_m} & J_m(X) \ar[r]^-{g_m} & X }
\end{displaymath}
Then $J_m(X) \setminus \im(f_m)$ is the locus of $m$-jets that fail to lift to an $(m+1)$-jet, and so $W_m$ is complement of the image of this set under $g_m$. This is $\GL$-constructible by the $\GL$-version of Chevalley's theorem \cite[\S 1.2]{polygeom}; we recall that $\GL$-constructible means a finite union of locally closed $\GL$-stable subsets. Note that the potentially non-reducedness of jet schemes is not an issue here, since we can simply pass to the reduced subschemes. Let $W=\bigcap_{m \ge 1} W_m$; this is the locus of points that have the jet lifting property. Since each $W_m$ is a union of generalized $\GL$-orbits, so is $W$, and so (a) follows. Regular is equivalent to the jet lifting property (Proposition~\ref{prop:compare}), and so (b) follows from (a).
\end{proof}

We now come to the main point of \S \ref{ss:reg-ns}.

\begin{proposition} \label{prop:mainthm-3}
Let $x \in X$. If $\cO_{X,x}$ is regular then $x \in X_{\ns}$.
\end{proposition}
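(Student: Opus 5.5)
We will reduce to the case where $x$ is $\GL$-invariant, and then transfer regularity from $X$ down to the finite-dimensional varieties $X\{k^n\}$ using submersiveness.

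\emph{Reduction.} The generalized $\GL$-orbit of $x$ contains a unique $\GL$-fixed point $y$, the generic point of the closure of the orbit of $x$. By Lemma~\ref{lem:mainthm-2}(b), $\cO_{X,y}$ is regular because $\cO_{X,x}$ is. Since $X_{\ns}$ is open and $\GL$-stable, it is a union of generalized orbits. Hence $x\in X_{\ns}$ if and only if $y\in X_{\ns}$, and we may replace $x$ by $y$ and assume $x$ is $\GL$-invariant.

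\emph{Descent of regularity.} By Lemma~\ref{lem:mainthm-1} there is $n_0$ such that $\pi_n\colon X\to X\{k^n\}$ is submersive at $x$ for all $n\ge n_0$. Lemma~\ref{prop:reg-contract} then shows that $X\{k^n\}$ is regular at $\pi_n(x)$. Since $\cO_{X\{k^n\},\pi_n(x)}$ is Noetherian, our notion of regularity agrees with the classical one there (embedding codimension zero means embedding dimension equals dimension). So $\pi_n(x)$ is a non-singular point of $X\{k^n\}$ for every $n\ge n_0$.

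\emph{Conclusion.} For $n$ large, $X_{\sing}\{k^n\}$ is exactly the singular locus of $X\{k^n\}$, so $\pi_n(x)\notin X_{\sing}\{k^n\}$ for $n\gg0$. By the characterization of membership in closed $\GL$-subvarieties via the maps $\pi_n$ recalled in \S2, this gives $x\notin X_{\sing}$.

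The main obstacle is the submersiveness input, Lemma~\ref{lem:mainthm-1}, which needs both finite generation of $\fp$ after localizing and generic freeness of $\fp/\fp^2$; it is available because we first reduced to a $\GL$-invariant point. The one point to check is that replacing $x$ by $y$ is harmless for membership in $X_{\ns}$. This follows because $X_{\ns}$ is open and $\GL$-stable, and the closure of the orbit of $y$ contains $x$ and vice versa.
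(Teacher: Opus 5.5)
Your proposal is correct and follows essentially the same route as the paper. It uses Lemma~\ref{lem:mainthm-1} and Lemma~\ref{prop:reg-contract} at a $\GL$-invariant point, and Lemma~\ref{lem:mainthm-2} together with the fact that $X_{\ns}$ is open and $\GL$-stable to pass to the $\GL$-fixed point of the generalized orbit. The differences are cosmetic: you reduce to the invariant point first, whereas the paper handles the invariant case first. You also state explicitly that the paper's notion of regularity agrees with the classical one for the Noetherian local rings $\cO_{X\{k^n\},\pi_n(x)}$, which the paper leaves implicit.
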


\begin{proof}
First suppose that $x$ is $\GL$-invariant. By Lemma~\ref{lem:mainthm-1}, there is some $n_0$ such that the map $\pi_n \colon X \to X\{k^n\}$ is submersive at $x$ for all $n \ge n_0$. By Lemma~\ref{prop:reg-contract} this implies that $X\{k^n\}$ is regular at $\pi_n(x)$ for all $n\geq n_0$, hence $x\in X_{\ns}$.


We now treat the general case. Let $y$ be the unique $\GL$-invariant point in the generalized orbit of $x$. Since $\cO_{X,x}$ is regular, so is $\cO_{X,y}$ (Lemma~\ref{lem:mainthm-2}). By the previous paragraph, $y \in X_{\ns}$. Since $X_{\ns}$ is a $\GL$-stable open set, it contains the entire generalized $\GL$-orbit of $y$, and so $x$ belongs to $X_{\ns}$ as required.
\end{proof}

\subsection{The theorem}

At this point, we have effectively proved Theorem~\ref{mainthm}, except for the part involving differentials (which is handled in \S \ref{s:omega} below). We write out the details to explain how the logic fits together, and restate the theorem for convenience.

\begin{theorem}
The following conditions are equivalent, for a point $x$ of a $\GL$-variety $X$:
\begin{enumerate}
\item The point $x$ is non-singular, i.e., it belongs to $X_{\ns}$.
\item The local ring $\cO_{X,x}$ is regular (Definition~\ref{defn:regular}).
\item The jet lifting property holds at $x$ (Definition~\ref{defn:jlp}).
\item The local ring $\cO_{X,x}$ is a formally smooth $k$-algebra (Definition~\ref{defn:fsm}).
\item The local ring $\cO_{X,x}$ is a localization of a ring of the form $A[t_i]_{i \in I}$, where $A$ is a smooth $k$-algebra and $I$ is an index set.
\item There is non-singular $G(n)$-stable elementary open neighborhood of $x$, for some $n$.
\end{enumerate}
\end{theorem}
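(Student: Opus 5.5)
We assemble the equivalences from the results already established, organizing them as the cycle (a)$\Rightarrow$(f)$\Rightarrow$(e)$\Rightarrow$(d)$\Rightarrow$(b)$\Rightarrow$(a), with (b)$\Leftrightarrow$(c) attached separately.

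\emph{Step 1: (b)$\Leftrightarrow$(c).} This is immediate from Proposition~\ref{prop:compare}, applied to $A=\cO_{X,x}$. Since $\cO_{X,x}$ is a local $k$-algebra, regularity is equivalent to the jet lifting property at its maximal ideal. By the remark following Definition~\ref{defn:jlp}, this is the same as the jet lifting property of $X$ at $x$.

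\emph{Step 2: (a)$\Rightarrow$(f)$\Rightarrow$(e).} The implication (a)$\Rightarrow$(f) is Corollary~\ref{cor:core-sing}. For (f)$\Rightarrow$(e), suppose $U=B\times\bA(V)$ is an open neighborhood of $x$ with $B$ non-singular; shrinking $B$, we may take it affine and smooth over $k$. Then the coordinate ring of $U$ is $\cO(B)\otimes_k\Sym(V)$. Choosing a basis $\{t_i\}_{i\in I}$ of $V$ as a vector space, this is $\cO(B)[t_i]_{i\in I}$, and $\cO_{X,x}=\cO_{U,x}$ is a localization of it. (If the non-singularity of $U$ is to be used in deducing smoothness of $B$, we argue as in the proof of Corollary~\ref{cor:core-sing}.)

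\emph{Step 3: (e)$\Rightarrow$(d)$\Rightarrow$(b).} If $A$ is smooth over $k$, then $A$ is formally smooth over $k$. A polynomial ring in arbitrarily many variables over a formally smooth algebra is formally smooth, since lifts can be chosen freely on the variables. Localizations of formally smooth algebras are formally smooth, since a localization is formally étale. Composing these gives (d). The implication (d)$\Rightarrow$(b) is part of Proposition~\ref{prop:compare}.

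\emph{Step 4: (b)$\Rightarrow$(a).} This is Proposition~\ref{prop:mainthm-3}, which closes the cycle.

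The substantive inputs are all earlier results: Theorem~\ref{thm:core-embcodim} (through Corollary~\ref{cor:core-sing}) and Proposition~\ref{prop:mainthm-3}. The only point needing care here is Step~2. We must check that the elementary neighborhood can be taken with $B$ smooth, which requires that non-singularity of $x$ in $U$ forces non-singularity of the $B$-component. This holds because the evaluations satisfy $U\{k^m\}=B\times\bA(V\{k^m\})$, and a product with affine space is singular exactly where the first factor is.
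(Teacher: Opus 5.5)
Your proposal is correct and follows the paper's proof essentially verbatim. It uses the same cycle (a)$\Rightarrow$(f)$\Rightarrow$(e)$\Rightarrow$(d)$\Rightarrow$(b)$\Rightarrow$(a), with Corollary~\ref{cor:core-sing} and Proposition~\ref{prop:mainthm-3} supplying the substantive steps, Proposition~\ref{prop:compare} giving (b)$\Leftrightarrow$(c) and (d)$\Rightarrow$(b), and your extra check that $U=B\times\bA(V)$ non-singular forces $B$ smooth simply spelling out what the paper calls clear in (f)$\Rightarrow$(e).
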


\begin{proof}
We have seen that (a) implies (f) (Corollary~\ref{cor:core-sing}), and it is clear that (f) implies (e). It is well-known that that (e) implies (d): indeed, smooth algebras are formally smooth, polynomial rings over formally smooth algebras are formally smooth, and a localization of a formally smooth algebra is formally smooth. We have seen (Proposition~\ref{prop:compare}) that (b) and (c) are equivalent, and they are implied by (d). Finally, (b) implies (a) by Proposition~\ref{prop:mainthm-3}, which completes the proof.
\end{proof}

\subsection{A quantitative result}

The methods used above allow us to prove a quantitative result on singularities of $\GL$-varieties, which we now explain. Let $X$ be a $k$-scheme and let $x \in X$ be a point for which the jet lifting property fails. Define $\sigma(x)$ to be the minimum $m$ for which an $m$-jet at $x$ fails to lift to an $(m+1)$-jet. When $X$ is a hypersurface embedded in a finite-dimensional nonsingular variety, then $\sigma(x)$ is just the multiplicity of $X$ at $x$. Hence $\sigma(x)$ can be regarded as a measure of singularities.


\begin{theorem} \label{thm:quant}
Let $X$ be a $\GL$-variety. Then there exist an integer $\sigma_0$ such that $\sigma(x) \le \sigma_0$ for all $x \in X_{\sing}$.
\end{theorem}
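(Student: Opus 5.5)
We will use the loci $W_m$ from the proof of Lemma~\ref{lem:mainthm-2}. Let $W_m \subset X$ be the set of points $x$ at which every $m$-jet centered at $x$ lifts to an $(m+1)$-jet. As shown there, each $W_m$ is $\GL$-constructible; this uses the $\GL$-version of Chevalley's theorem applied to the reduced $\GL$-varieties underlying $J_{m+1}(X)$ and $J_m(X)$. Put $C_m = X \setminus W_m$, which is also $\GL$-constructible, and $D_m = C_1 \cup \cdots \cup C_m$. By definition, $x \in D_m$ exactly when $\sigma(x) \le m$. By Theorem~\ref{mainthm} (equivalence of (a) and (c), already proved above), a point fails the jet lifting property iff it is singular, so
\begin{displaymath}
X_{\sing} = \bigcup_{m \ge 1} D_m,
\end{displaymath}
an increasing union of $\GL$-constructible subsets. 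The theorem is equivalent to the assertion that this union stabilizes, i.e.\ $D_{\sigma_0} = X_{\sing}$ for some $\sigma_0$.

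To get stabilization we need a noetherianity input, since constructible sets need not satisfy the ascending chain condition without further argument. The natural tool is topological noetherianity of $\GL$-varieties (\cite{draisma}), which gives the descending chain condition on closed $\GL$-stable subsets. We translate the statement into a descending chain as follows. Consider the closed $\GL$-stable sets $E_m = \overline{X_{\sing} \setminus D_m}$. These form a descending chain, so they stabilize at some $E$. Suppose $E \neq \emptyset$, and let $\eta$ be the generic point of an irreducible component of $E$; it is a $\GL$-invariant point. A $\GL$-constructible set is a union of generalized orbits, and its intersection with the irreducible closed set $\overline{\{\eta\}}$ either contains a dense open subset or is nowhere dense. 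Since $X_{\sing}\setminus D_m$ is dense in $E$ for every $m$, the point $\eta$ lies in $X_{\sing}\setminus D_m$ for all $m$. This says $\eta$ is singular but $\sigma(\eta) > m$ for every $m$, so $\eta$ has the jet lifting property. Hence $\eta$ is non-singular, a contradiction. Therefore $E = \emptyset$, so $X_{\sing} = D_m$ for $m$ large, and we take $\sigma_0$ to be such an $m$.

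Two points need care in this argument. First, we must justify that the generic point of an irreducible component of $E_m$ actually lies in the constructible set $X_{\sing}\setminus D_m$ whose closure is $E_m$. This holds because a constructible set contains a dense open subset of each irreducible component of its closure, which in turn requires the $\GL$-constructible sets here to be genuinely constructible in the scheme sense, or at least to be finite unions of locally closed sets. Second, and this is the main obstacle, we need noetherianity of the topological space of $\GL$-stable closed subsets. If the cited noetherianity is unavailable in this form, the fallback is induction on magnitude together with the shift theorem. On the dense open elementary locus, where $X \cong B \times \bA(V)$, $\sigma$ is controlled by the finite-dimensional variety $B$, since jet liftings decouple along the affine-space factor, and $\sigma$ is bounded on a finite-dimensional variety by Noetherian induction. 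One then recurses on the proper closed complement, which is cut out by finitely many $\GL$-orbits of equations after shifting, using Proposition~\ref{prop:fg}, and has smaller invariants.
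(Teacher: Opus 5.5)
Your proof is correct and takes essentially the paper's route. Like the paper, you exhaust $X_{\sing}$ by an increasing chain of $\GL$-constructible jet-lifting loci and conclude that the chain stabilizes because $X_{\sing}$ is closed. The paper uses the locus where some tangent vector fails to lift to an $m$-jet together with Remark~\ref{rmk:1-jet}, while you use $\{\sigma\le m\}$ directly via (a)$\Leftrightarrow$(c). Your descending-chain and generic-point argument, resting on the $\GL$-noetherianity of \cite{draisma}, spells out the stabilization step that the paper asserts in one line. Since that noetherianity is available, the fallback in your last paragraph is not needed.
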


\begin{proof}
Let $W_m \subset X$ be the locus of points where some tangent vector at $x$ fails to lift to an $m$-jet. This set is $\GL$-constructible, by the same argument appearing in the proof of Lemma~\ref{lem:mainthm-2}. We have $X_{\sing} = \bigcup_{m \ge 1} W_m$ by Theorem~\ref{mainthm} and Remark~\ref{rmk:1-jet}. Since $W_{\bullet}$ is an ascending chain of $\GL$-constructible sets and $X_{\sing}$ is closed, it follows that $X_{\sing}=W_m$ for some $m$. We can then take $\sigma_0=m$.
\end{proof}

Thus, in a rough sense, the singular points of $X$ are all (approximately) equally singular. 
We expect a similar statement to hold for the system of finite dimensional varieties $X\{k^n\}$, though we have not worked out the details.

%
%

%
%

\section{The flat locus of a module} \label{s:omega}

In this section, we prove Theorem~\ref{mainthm3}, and use it to finish up the final detail in the proof of Theorem~\ref{mainthm}.

\subsection{The main result and an application}

The following is a slightly more precise version of Theorem~\ref{mainthm3}.

\begin{theorem} \label{thm:flat}
Let $A$ be a reduced finitely $\GL$-generated $\GL$-algebra, let $X=\Spec(A)$, and let $M$ be a finitely $\GL$-generated $A$-module. Let $U \subset X$ be the set of prime ideals $\fp$ such that $M_{\fp}$ is a flat $A_{\fp}$-module. Then:
\begin{enumerate}
\item The set $U$ is open and dense in $X$.
\item For each $\fp \in U$, the $A_{\fp}$-module $M_{\fp}$ is free.
\item Let $Z=X \setminus U$ be the non-flat locus and let $W_n \subset X\{k^n\}$ be the non-flat locus of $M\{k^n\}$. Then $W_n$ is contained in $Z\{k^n\}$, with equality for $n \gg 0$.
\end{enumerate}
\end{theorem}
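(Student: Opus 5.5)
The strategy is Noetherian induction on $X$ over closed $\GL$-subvarieties, with generic freeness (Theorem~\ref{thm:genfree}) as the basic input. The key step is the following \emph{local claim}: for every $\GL$-invariant point $\fp$ of $X$ (equivalently a $\GL$-stable prime), there is an $n$ and a $G(n)$-invariant $h \notin \fp$ such that one of two things holds. Either $M[1/h]$ is equivariantly free over $A[1/h]$ on a neighborhood of the generic point of $V(\fp)$. Or $M_\fp$ is not flat, and the same is true for all points of the generalized orbit.

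Given this claim, the flat locus $U$ is a union of generalized $\GL$-orbits. It is also open: a point of $U$ lies in a generalized orbit whose $\GL$-invariant point $y$ has a neighborhood $X[1/h]$ on which $M$ is free, and the $\GL$-translates of these neighborhoods cover the orbit. Density follows by applying Theorem~\ref{thm:genfree} to $A/\fq$ for each minimal prime $\fq$. Here one uses that $A$ is reduced, so $A_\fq$ is a field and $M_\fq$ is automatically free; one then spreads this out after a shift and localization.

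For (b), I would first reduce to the $\GL$-invariant point $y$ of the orbit. At $y$, I would use Proposition~\ref{prop:fg} to make $\fp$ finitely $G(n)$-generated. I would then apply Theorem~\ref{thm:genfree} to the finitely generated modules $M/\fp M$ and $\Tor$-type data over $B=A/\fp$, so that after shifting and inverting, $M/\fp M \cong B\otimes V$ equivariantly. Lifting the basis via semisimplicity gives a map $A\otimes V\to M$ that is surjective near $\fp$ by Nakayama. Its kernel $K$ satisfies $K\otimes\kappa(\fp)=0$ exactly when $M_\fp$ is flat, by the local flatness criterion in the form $\Tor_1^{A_\fp}(\kappa(\fp),M_\fp)=0$. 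The obstacle is that $A_\fp$ is non-Noetherian, so flat does not automatically mean free. My first attempt here: $K$ is a submodule of the finitely generated module $A\otimes V$. Flatness gives $K_\fp=\fp K_\fp$. I would then use the shift theorem to reduce to $A[1/h]$ elementary, so $A[1/h]=B_1\otimes\Sym(W)$, over which $A\otimes V$ is graded by polynomial degree. A graded Nakayama argument on the $\GL$-degree then forces $K_\fp=0$, since the degree-$0$ invariant part is a Noetherian ring $B_1$. This gives freeness at invariant points. For other points of the orbit, translates of $h$ and an equivariant free presentation carry freeness along.

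For (c), I would use that $A\{k^n\}=A^{G(n)}$, that $(-)^{G(n)}$ is exact and monoidal, and that $M\{k^n\}=M^{G(n)}$. If $\pi_n(x)$ lies outside $Z\{k^n\}$, then the free neighborhoods constructed above restrict to free neighborhoods of $M\{k^n\}$ once $n$ exceeds the shift. This yields the equality for $n\gg0$, provided finitely many such neighborhoods cover $U$. That finiteness follows because $U$ is open and $\GL$-stable, so by constructibility in the style of Lemma~\ref{lem:mainthm-2} it is covered by finitely many orbit-translates. For the inclusion $W_n\subset Z\{k^n\}$ for all $n$, I would argue the contrapositive. If a point $z\in X\{k^n\}$ lies off $Z\{k^n\}$, it lifts to some $x\in U$. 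Since $\pi_n\colon X\to X\{k^n\}$ admits a retraction-type section given by the inclusion $X\{k^n\}\to X$ coming from the projection $A\to A^{G(n)}$, flatness of $M$ at the image of $z$ under that section passes to $M\{k^n\}$ at $z$ by base change along $A\to A\{k^n\}$.

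The main obstacle I expect is (b), that flatness implies freeness at $\GL$-invariant points without Noetherian hypotheses. The graded reduction to the elementary model is where I would concentrate effort.
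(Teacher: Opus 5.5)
There are two genuine gaps: the flat-implies-free step in (a)/(b), and the reverse inclusion in (c).

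\textbf{Parts (a) and (b).} Your skeleton matches the paper's. At a $\GL$-stable prime $\fp\in U$ you lift a generic basis of $M/\fp M$ to a map $F=A\otimes V\to M$, kill its kernel $K$, and then spread over generalized orbits. But the decisive step, $K_\fp=\fp K_\fp\Rightarrow K_\fp=0$, is not established.

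\begin{itemize}
\item The shift theorem (Theorem~\ref{thm:shift}) only produces \emph{some} nonzero $G(n)$-invariant $h$ with $X[1/h]$ elementary. Nothing makes $h\notin\fp$.
\item If $\fp$ lies in the core there is no such $h$ at all (Theorem~\ref{thm:core-embcodim}). An example is the vertex of the rank $\le 1$ locus of Example~\ref{ex:quad}. The theorem still asserts freeness at such primes whenever $M$ is flat there, so your argument leaves them uncovered.
\item Even on an elementary open, a graded Nakayama argument does not apply as stated. $A_\fp$ inverts non-homogeneous elements, $\fp$ is not the irrelevant ideal, and $K$ is not known to be finitely generated (this is exactly the open noetherianity question).
\end{itemize}

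The missing ingredient is the Krull intersection theorem for $\GL$-stable primes, Theorem~\ref{thm:krull} from \cite{imgclosure}: $\bigcap_n\fp^nA_\fp=0$. Applied coefficientwise it gives $\bigcap_n\fp^nF_\fp=0$, hence $\bigcap_n\fp^nK_\fp=0$. Then $K_\fp=\fp K_\fp$ forces $K_\fp=0$ (Proposition~\ref{prop:nak2}), with no elementary model needed.

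You also skip the passage from $K_\fp=0$ to $K[1/f]=0$ for a single $f\notin\fp$. This again cannot use finite generation of $K$. The paper uses reducedness: after discarding the components not through $\fp$, every element of $A\setminus\fp$ is a non-zerodivisor on $A$, hence on $F\supset K$ (Proposition~\ref{prop:nak3}). Your ``local claim'' of freeness on a neighborhood, and with it the openness of $U$, rests on this.

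\textbf{Part (c).} Both of your arguments prove the same inclusion $W_n\subset Z\{k^n\}$, i.e.\ that flatness of $M$ descends to $M\{k^n\}$. Covering $U$ by finitely many free neighborhoods says nothing about points of $Z\{k^n\}$. So the equality for $n\gg 0$ is not proved: you never show that non-flatness of $M$ forces non-flatness of $M\{k^n\}$.

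The paper does this with a colimit argument. Write $A_n=A\{k^n\}$, $M_n=M\{k^n\}$ and $\fp_n=\fp\cap A_n$. Take $\fp$ to be the generic point of one of the finitely many irreducible components of $Z$. Then $M_\fp=\varinjlim_n (M_n)_{\fp_n}$ over $A_\fp=\varinjlim_n (A_n)_{\fp_n}$. A directed colimit of flat modules over a directed system of rings is flat \stacks{05UU}, so $(M_n)_{\fp_n}$ is non-flat for all large $n$. Since $W_n$ is closed and these finitely many points are dense in $Z\{k^n\}$, this gives $Z\{k^n\}\subset W_n$.

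A smaller point concerns your section argument. Base change along the retraction $A\to A\{k^n\}$ yields $M\otimes_A A\{k^n\}$, not $M\{k^n\}$; for $M=A\otimes V$ it is $A\{k^n\}\otimes V$ rather than $A\{k^n\}\otimes V\{k^n\}$. You must add that $M\{k^n\}$ is an $A\{k^n\}$-linear retract of $M\otimes_A A\{k^n\}$. Alternatively, use the paper's route via $\Tor$ and $G(n)$-invariants (Lemma~\ref{lem:flat-invar}).
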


The proof of the theorem will take the remainder of \S \ref{s:omega}. Before getting to it, we look at an application where we take $M$ to be the module of differentials.

We first make some general comments about differentials. Let $A$ be a $\GL$-algebra and let $I$ be the kernel of the multiplication map $A \otimes A \to A$. Then the module $\Omega_A$ of K\"ahler differentials is $I/I^2$, and is thus clearly a module over $A$ in our sense of the term, i.e., a module internal to the category of polynomial representations. Moreover, since evaluation is exact and compatible with tensor products, we see that $I\{k^n\}$ is the kernel of $A\{k^n\} \otimes A\{k^n\} \to A\{k^n\}$, and so $\Omega_A\{k^n\}$ is identified with $\Omega_{A\{k^n\}}$.

We now come to our main application of the theorem. Note that this establishes (a) $\Leftrightarrow$ (g) in Theorem~\ref{mainthm}, and completes the proof of that theorem.

\begin{corollary} \label{cor:sing-omega}
Let $X$ be a $\GL$-variety. Then $X_{\sing}$ is exactly the non-flat locus of $\Omega_X$.
\end{corollary}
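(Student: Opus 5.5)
We apply Theorem~\ref{thm:flat} with $M=\Omega_A$, where $X=\Spec(A)$. First we check that this module satisfies the hypotheses. If $A$ is a quotient of $\Sym(V)$ for a finite length polynomial representation $V$, then $\Omega_A$ is a quotient of $A \otimes \Omega_{\Sym(V)} = A \otimes_k V$. Hence $\Omega_A$ is a finitely $\GL$-generated $A$-module. Let $Z \subset X$ be the non-flat locus of $\Omega_X$. By part (c) of Theorem~\ref{thm:flat}, for $n \gg 0$ the set $Z\{k^n\}$ is the non-flat locus of $\Omega_A\{k^n\}$. As noted above, $\Omega_A\{k^n\}$ is identified with $\Omega_{A\{k^n\}}$.

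Next we identify the non-flat locus of $\Omega_{X\{k^n\}}$ with $X\{k^n\}_{\sing}$. Each $X\{k^n\}$ is a reduced variety of finite type over a field of characteristic~0. For such a variety, a point $y$ is regular if and only if $\Omega$ is free at $y$ of rank equal to the local dimension. We want the cleaner statement that $y$ is regular if and only if $\Omega$ is flat at $y$, where flat is equivalent to free because the module is finitely presented. The forward direction is standard. For the converse, suppose $\Omega_{X\{k^n\},y}$ is free of rank $r$. Since $X\{k^n\}$ is reduced, it is generically smooth on each irreducible component through $y$. Generization preserves the rank, so every component through $y$ has dimension $r$, the local ring at $y$ is equidimensional of dimension $r$, and $\Omega$ is free of rank $r$ at $y$. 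The Jacobian criterion in characteristic~0 then shows that $y$ is a smooth point. This step is the only point needing care, and the reducedness hypothesis is exactly what makes it work. It follows that $Z\{k^n\}=X\{k^n\}_{\sing}$ for $n\gg0$.

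Finally, $Z$ is a closed $\GL$-stable subset, since $U$ is open by Theorem~\ref{thm:flat}(a) and $\GL$-stable because $\Omega_X$ is equivariant. Give $Z$ its reduced structure, which makes it a closed $\GL$-subvariety. Then $Z$ satisfies the property that uniquely characterizes $X_{\sing}$: its evaluation $Z\{k^n\}$ is the singular locus of $X\{k^n\}$ for all large $n$. Two closed $\GL$-subvarieties with the same evaluations for $n\gg0$ coincide, since a point lies in a closed $\GL$-subvariety iff its images $\pi_n(x)$ lie in the evaluations for $n \gg 0$. Hence $Z=X_{\sing}$.

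Alternatively, one can avoid the uniqueness statement by working pointwise. By Theorem~\ref{thm:flat}(b), if $\Omega_X$ is flat at $x$ then it is free there. Conversely, if $x$ is non-singular then it lies in an elementary open set $B\times\bA(V)$ with $B$ smooth, by Corollary~\ref{cor:core-sing}, and $\Omega$ is visibly free on that open set. The main obstacle, however, is really contained in Theorem~\ref{thm:flat}(c); given that theorem, the corollary is just the finite-dimensional comparison carried out above.
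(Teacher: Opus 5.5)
Your main argument is correct and is essentially the paper's proof. You apply Theorem~\ref{thm:flat}(c) to $M=\Omega_A$, use $\Omega_A\{k^n\}=\Omega_{A\{k^n\}}$ together with the fact that on the reduced characteristic-$0$ varieties $X\{k^n\}$ the non-flat locus of $\Omega$ is the singular locus, and conclude $Z=X_{\sing}$ by uniqueness; you also spell out details the paper leaves implicit (finite $\GL$-generation of $\Omega_A$, and why free $\Omega$ forces smoothness on a reduced variety). Your closing ``pointwise'' alternative does not stand on its own, however: part~(b) of Theorem~\ref{thm:flat} only upgrades flatness at $x$ to freeness and says nothing about $x$ being non-singular, so for ``$\Omega_X$ flat at $x$ $\Rightarrow$ $x\in X_{\ns}$'' you still need at least the half $W_n\subset Z\{k^n\}$ of part~(c) together with the finite-dimensional comparison, as in your main argument.
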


\begin{proof}
Let $Z$ be the non-flat locus of $\Omega_X$. For $n$ sufficiently large, $Z\{k^n\}$ is the non-flat locus of $\Omega_X\{k^n\}=\Omega_{X\{k^n\}}$, which is the singular locus of $X\{k^n\}$, which is $X_{\sing}\{k^n\}$. We thus see that $Z\{k^n\}=X_{\sing}\{k^n\}$ for all $n \gg 0$, and so $Z=X_{\sing}$, as required.
\end{proof}

\begin{remark} \label{rmk:newproof}
The above argument gives a new proof of the main theorem of \cite{CDD}. Indeed, it shows that there is a closed $\GL$-subvariety of $X$ (namely $Z$) that evalutes to the singular locus of $X\{k^n\}$ for all $n \gg 0$. However, as we see below, this proof relies on the version of the Krull intersection theorem proved in \cite{imgclosure}, which is a fairly difficult theorem.
\end{remark}

\subsection{Nakayama's lemma}

We now prove some versions of Nakayama's lemma for $\GL$-algebras. The following is the most straightforward version.

\begin{proposition} \label{prop:nak1}
Let $A$ be a finitely $\GL$-generated $\GL$-algebra, let $\fp$ be a $\GL$-stable prime ideal of $A$, and let $M$ be a finitely $\GL$-generated $A$-module such that $M_{\fp}/\fp M_{\fp}=0$. Then $M_{\fp}=0$.
\end{proposition}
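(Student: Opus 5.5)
The approach is to reduce to an equivariantly free module over a localization of $A/\fp$, using generic freeness (Theorem~\ref{thm:genfree}), and then read off the vanishing of $M_\fp$ from the vanishing of its fiber at the generic point of $\Spec(A/\fp)$.

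First, set $B=A/\fp$, which is an integral finitely $\GL$-generated $\GL$-algebra. Put $N=M/\fp M$; it is a finitely $\GL$-generated $B$-module. Its localization at $\fp$ is $N_\fp = M_\fp/\fp M_\fp = 0$ by hypothesis, and $N_\fp$ is just $N \otimes_B \kappa(\fp)$, where $\kappa(\fp)$ is the fraction field of $B$. Applying Theorem~\ref{thm:genfree} to $N$ gives $n$, a nonzero $G(n)$-invariant $\bar h \in B$, and a finite length polynomial $G(n)$-representation $V$ with $N[1/\bar h] \cong V \otimes_k B[1/\bar h]$. Tensoring with $\kappa(\fp)$ yields $0 = V \otimes_k \kappa(\fp)$, so $V=0$ and hence $N[1/\bar h]=0$.

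Next, lift $\bar h$ to a $G(n)$-invariant $h \in A \setminus \fp$; this is possible because $(-)^{G(n)}$ is exact. Then $M[1/h]/\fp M[1/h] = 0$, i.e.\ $M[1/h] = \fp M[1/h]$. Localizing further at $\fp$ gives $M_\fp = \fp M_\fp$. This alone does not give $M_\fp=0$, because $M_\fp$ need not be finitely generated over $A_\fp$, so the classical Nakayama lemma does not apply directly.

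This is the main obstacle, and the natural tool is the degree grading of polynomial representations. After shifting (restricting to $G(n)$), $M[1/h]$ is a finitely $G(n)$-generated $A[1/h]$-module, since $h$ is $G(n)$-invariant; the ideal $\fp$ is $G(n)$-stable; and $M[1/h] = \fp M[1/h]$ holds. Pick a finite length $G(n)$-subrepresentation $E$ of $M[1/h]$ that generates it.
\begin{itemize}
\item If $\fp$ lies in positive degree (degree~0 being the $G(n)$-invariants), then the degree of generators forces vanishing, as in graded Nakayama. Every element of $\fp M[1/h]$ lies in the submodule generated by elements of positive degree times $E$. Writing $E$ as a finite sum of isotypic pieces and inducting on the degree in which $E$ lives, we get $E \subset \fp M[1/h]$, which forces the lowest-degree part of $E$ to vanish modulo lower pieces. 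Repeating, $E=0$.
\item The degree~0 part $\fp_0 = \fp \cap A[1/h]^{G(n)}$ needs separate treatment. For this, pass to invariants: $M[1/h]^{G(m)}$ is a finitely generated module over the noetherian finitely generated ring $A[1/h]^{G(m)} = A\{k^m\}[1/h]$, since it is generated by the $G(m)$-invariants of the $\GL$-translates of $E$ under a finite-dimensional piece. Apply classical Nakayama there, at the prime $\fp^{G(m)}$, and take the union over $m$.
\end{itemize}

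Concretely, the cleanest way to run this is to avoid the grading entirely. Use exactness and tensor-compatibility of $(-)^{G(m)}$ to get $M_m = \fp_m M_m$ locally at $\fp_m$. Since $M_m$ is finitely generated over the noetherian local ring $(A_m)_{\fp_m}$, classical Nakayama gives $(M_m)_{\fp_m}=0$ for all $m$. Then $M_\fp = \varinjlim_m (M_m)_{\fp_m} = 0$, since $M=\bigcup M_m$ and $A_\fp = \varinjlim (A_m)_{\fp_m}$.

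The points to check carefully are two. First, $(\fp M)^{G(m)} = \fp_m M_m$: this holds because $\fp M$ is the image of $\fp\otimes M$, and $(\fp\otimes M)^{G(m)}$ is the image of $\fp_m\otimes M_m$ by tensor-compatibility. Second, $M_m$ is finitely generated over $A_m$. I expect this finiteness to be the delicate step. I would derive it from $M$ being a quotient of $A\otimes V$ with $V$ of finite length, so that $M_m$ is a quotient of $A_m \otimes_k V\{k^m\}$, which is finitely generated.
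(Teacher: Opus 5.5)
Your final argument (the generic-freeness step followed by the ``concretely'' paragraph) is correct. Its core matches the paper's proof: evaluate on $k^m$, where $M\{k^m\}$ is a finitely generated module over the noetherian ring $A\{k^m\}$. Then apply the classical Nakayama lemma at $\fp_m=\fp\cap A\{k^m\}$, and recover $M_\fp=0$ from $M_\fp=\varinjlim_m (M\{k^m\})_{\fp_m}$.

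The difference is in how you obtain the finite-level hypothesis $(M\{k^m\})_{\fp_m}=\fp_m (M\{k^m\})_{\fp_m}$. The paper does it by direct bookkeeping. It takes finitely many $\GL$-generators $x_i$ of $M$ and writes $s_i x_i\in\fp M$ with $s_i\notin\fp$ via explicit finite expressions. It then picks one level $N$ large enough to contain the $x_i$, the $s_i$, the coefficients and the group elements involved. Since $\GL_N$ acts on $A\{k^N\}$ and $M\{k^N\}$ preserving $\fp_N$, and the $\GL_N$-translates of the $x_i$ generate $M\{k^N\}$, the hypothesis at level $N$ follows at once, with no invariant denominator needed.

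You instead apply generic freeness (Theorem~\ref{thm:genfree}) to $M/\fp M$. This produces a single $G(n)$-invariant $h\notin\fp$ with $M[1/h]=\fp M[1/h]$, which you transport to every level $m\ge n$ by exactness and tensor-compatibility of $(-)^{G(m)}$. Your checks that $(\fp M)^{G(m)}=\fp_m M\{k^m\}$ and that $M\{k^m\}$ is finitely generated over $A\{k^m\}$ are exactly what this needs. Your route gives a slightly stronger intermediate statement: one invariant $h$ kills $M/\fp M$, uniformly in $m$. The cost is that it spends a deep theorem on a step the paper handles elementarily.

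Two minor points. First, your Nakayama step is justified only for $m\ge n$, where $h$ is $G(m)$-invariant, not ``for all $m$'' as written. This is harmless, since the colimit only needs large $m$. Second, the middle detour through the degree grading is not a complete argument, because the positive-degree and degree-zero cases are never combined. It can simply be deleted in favor of your final paragraph.
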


\begin{proof}
Let $x_1, \ldots, x_n$ be $\GL$-generators of $M$. For each $1 \le i \le n$, there is an expression $s_i x_i = \sum_{i=1}^n \sum_j a_{i,j} g_j x_i$, where $s_i \not\in \fp$ and $a_{i,j} \in \fp$ and $g_j \in \GL$. Fix $N>0$, and use a prime to denote evaluation on $k^N$, e.g., $A'=A\{k^N\}$. Take $N$ sufficiently large so that each $s_i$ and $a_{i,j}$ belongs to $A'$, and each $g_j$ belongs to $\GL_N$. Since $x_1, \ldots, x_n$ are $\GL_N$-generators for $M'$, we thus see that $M'_{\fp'}/\fp' M'_{\fp'}=0$. By the classical Nakayama lemma, it follows that $M'_{\fp'}=0$. Thus for each $1 \le i \le n$ there is some $s'_i \in A' \setminus \fp'$ such that $s'_i x_i=0$. This shows that each $x_i$ vanishes in $M_{\fp}$, and so $M_{\fp}=0$.
\end{proof}

In our main application, we will need to apply Nakayama's lemma to a module $M$ that occurs as a submodule of $F$, where $F$ has the form $V \otimes A$ and $V$ is a finite length polynomial representation. A general expectation (or hope) is that a finitely $\GL$-generated $A$-module is noetherian. If this were the case then $M$ would be finitely $\GL$-generated, and we could use the above version of Nakayama's lemma. However, this noetherian statement is not known. Fortunately, we can still establish Nakayama's lemma in this setting. The key tool is the following version of the Krull intersection theorem from \cite[Theorem~1.6]{imgclosure}:

\begin{theorem} \label{thm:krull}
Let $A$ be a reduced finitely $\GL$-generated $\GL$-algebra and let $\fp$ be a $\GL$-stable prime ideal of $A$. Then $\bigcap_{n \ge 0} \fp^n A_{\fp}=0$.
\end{theorem}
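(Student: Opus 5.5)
The plan is to reduce to the classical Krull intersection theorem on the finite-dimensional noetherian local rings $(A\{k^M\})_{\fp_M}$, where $\fp_M=\fp\cap A\{k^M\}$. The main point is to control how the order of a fixed element changes as $M$ grows.

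\textbf{Step 1: retractions.} For $M \ge 0$, the projection $\bV \to k^M$ induces a ring homomorphism $r_M \colon A \to A\{k^M\}=A^{G(M)}$. It restricts to the identity on $A\{k^M\}$. I claim that $r_M(\fp) \subset \fp_M$. To see this, note that $\fp \cap A\{k^{M'}\}$ is $\GL_{M'}$-stable for every $M'$. It is therefore stable under the Zariski closure of $\GL_{M'}$ in $\mathrm{End}(k^{M'})$, and this closure contains the coordinate projections. Consequently $r_M(\fp^m)\subset \fp_M^m$ for every $m$.

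\textbf{Step 2: reduction to a uniformity statement.} Let $f \in \bigcap_m \fp^m A_\fp$, and choose $N$ with $f \in A\{k^N\}$. For each $m$ there is some $s_m \notin \fp$ with $s_m f \in \fp^m$. Choose $M_m \ge N$ such that $s_m \in A\{k^{M_m}\}$. Applying $r_{M_m}$ from Step 1 fixes both $s_m$ and $f$, so $s_m f \in \fp_{M_m}^m$. Hence
\[
\operatorname{ord}_{\fp_{M_m}}(f) \ge m ,
\]
where the order is computed in the local ring $(A\{k^{M_m}\})_{\fp_{M_m}}$. The difficulty is that $M_m$ depends on $m$. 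It therefore suffices to show the following: there is an $n_0$ such that, for $M' \ge M \ge n_0$, the map of associated graded rings
\[
\gr_{\fp_M}(A\{k^M\})_{\fp_M} \to \gr_{\fp_{M'}}(A\{k^{M'}\})_{\fp_{M'}}
\]
is injective. Equivalently, $\operatorname{ord}_{\fp_M}(f)=\operatorname{ord}_{\fp_{M'}}(f)$ for all $f \in A\{k^M\}$. Granting this, $f$ lies in $\fp_{n_0}^m(A\{k^{n_0}\})_{\fp_{n_0}}$ for every $m$ (enlarge $N$ to $n_0$ if needed). The classical Krull intersection theorem in this noetherian local ring then gives $f=0$ there, and hence $f=0$ in $A_\fp$.

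\textbf{Step 3: the uniformity, which is the main obstacle.} I would first try to mimic Lemma~\ref{lem:mainthm-1}, but for all graded pieces at once. The freedom we have is to shift and to invert elements of $A\setminus\fp$, since neither changes $A_\fp$ nor the conclusion.
\begin{itemize}
\item[(i)] Using Proposition~\ref{prop:fg}, arrange that $\fp$ is finitely $\GL$-generated. Then $\gr_\fp(A)=\bigoplus_m \fp^m/\fp^{m+1}$ is a finitely $\GL$-generated algebra over $B=A/\fp$, and it is generated in degree $1$.
\item[(ii)] By the shift theorem, make $B$ elementary. By Theorem~\ref{thm:genfree}, make $\fp/\fp^2 \cong B\otimes V$ equivariantly free.
\item[(iii)] Write $\gr_\fp(A)=\Sym_B(B\otimes V)/J$ with $J$ a $\GL$-stable ideal. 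The needed injectivity says that $J\{k^M\}$ is the contraction of $J$, for $M\ge n_0$, after passing to the generic point of $B$.
\end{itemize}
I expect this last point to be the hardest, because it involves infinitely many degrees simultaneously. My first attempt would apply generic freeness to $J$, or to $\gr_\fp(A)$, viewed as a module over the $\GL$-algebra $\Sym_B(B\otimes V)$. The equivariant freeness would then be compatible with evaluation at $k^M$, and exactness of evaluation would show that $\gr(A_{\fp})^{G(M)}=\gr((A\{k^M\})_{\fp_M})$. Injectivity follows from this directly, since $G(M)$-invariants form a direct summand.

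If generic freeness for this infinitely generated situation is not available, the fallback is a noetherian induction on magnitude, as in the proof of Theorem~\ref{thm:shift}. One would use the embedding theorem to pass to $\GL$-subvarieties of $\bA(W)$ with $W$ of smaller magnitude, where the corresponding statement holds by induction.
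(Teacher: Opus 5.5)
\textbf{Verdict: there is a genuine gap.} Step~3 is the whole theorem, and the tools you propose for it do not supply it. For comparison, the paper does not prove this statement: it quotes it from \cite[Theorem~1.6]{imgclosure} and calls it a fairly difficult theorem (Remark~\ref{rmk:newproof}).

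\textbf{Steps 1 and 2.} These are correct, but they only reformulate the theorem. Together with the trivial map $\fp_M^m(A\{k^M\})_{\fp_M}\to \fp^mA_\fp$, they show that the $\fp A_\fp$-adic order of $f$ equals $\sup_M \operatorname{ord}_{\fp_M}(f)$. So the theorem says exactly that this supremum is finite when $f\neq 0$ in $A_\fp$. Your uniform injectivity is a strengthening of that statement.

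\textbf{Where the difficulty sits.} The non-localized comparison is automatic. Since $G(M)$-invariants are exact and compatible with tensor products, $(\fp^m)^{G(M)}=\fp_M^m$. Hence $\gr_{\fp_M}(A\{k^M\})=\gr_\fp(A)^{G(M)}$ is a direct summand of $\gr_\fp(A)$. Only localization can fail. Put $B_M=B^{G(M)}$; then $\gr((A\{k^M\})_{\fp_M})=\gr_\fp(A)^{G(M)}\otimes_{B_M}\operatorname{Frac}(B_M)$. An element of $\gr_\fp(A)^{G(M)}$ that is not $B_M$-torsion may still be killed by a nonzero $b\in B$ that is only $G(M')$-invariant for some $M'>M$. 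What you actually need is uniform control of the $B$-torsion of $\gr_\fp(A)$ in all degrees at once. For example, it would suffice to have a single $G(n)$-invariant $h\in B\setminus 0$ with $\gr_\fp(A)[1/h]$ torsion-free over $B[1/h]$. That is a generic-freeness statement for the finitely $\GL$-generated $B$-algebra $\gr_\fp(A)$ viewed as a $B$-module, not for a finitely generated module. This is where the real content of the theorem lies.

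\textbf{Why Theorem~\ref{thm:genfree} does not give it.}
\begin{enumerate}
\item Applied to $\gr_\fp(A)=S/J$ as a module over the domain $S=B\otimes\Sym(V)$, it is vacuous. If $J\neq 0$, then $S/J$ is $S$-torsion, so the equivariantly free module produced is $0$. The inverted element then lies in $\sqrt{J}$, inside the positive-degree part of $S$, and has nothing to do with $A\setminus\fp$ or with $A_\fp$. If $J=0$ there was nothing to prove.
\item It cannot be applied to $J$, since $J$ is not known to be finitely $\GL$-generated; that is the open noetherianity problem.
\item Applied degree by degree to $\fp^m/\fp^{m+1}$, it yields elements $h_m$ and shift levels $n_m$ that depend on $m$. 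This is precisely the non-uniformity you must eliminate.
\end{enumerate}

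\textbf{The identity in (iii).} The claimed identity $\gr(A_\fp)^{G(M)}=\gr((A\{k^M\})_{\fp_M})$ cannot come from exactness of invariants. The object $\gr(A_\fp)=\gr_\fp(A)\otimes_B\operatorname{Frac}(B)$ is not a polynomial representation, and the identity is the torsion statement in disguise.

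\textbf{The fallback.} Induction via the embedding theorem hits the same wall. To keep $\fp$ in the open set you need $h=\partial_v f\notin\fp$ in Proposition~\ref{prop:embedding2}. No such $h$ exists when $V(\fp)$ lies in the locus $Z$ of Lemma~\ref{lem:core-sing-1}. This is unavoidable for primes in the core: by the proof of Theorem~\ref{thm:core-embcodim}, every core point reaches that situation at some stage of the magnitude induction, and there the paper argues by a dimension count rather than by localizing. As it stands, your proposal reduces the theorem to a uniform generic torsion-freeness statement that it does not prove.
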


And here is our alternate version of Nakayama's lemma. Note that we do not actually need $V$ to be finite length.

\begin{proposition} \label{prop:nak2}
Let $A$ be a reduced finitely $\GL$-generated $\GL$-algebra and let $\fp$ be a $\GL$-stable prime ideal of $A$. Let $V$ be a polynomial representation, let $F=A \otimes_k V$, and let $M$ be an $A$-submodule of $F$. Suppose $M_{\fp}/\fp M_{\fp}=0$. Then $M_{\fp}=0$.
\end{proposition}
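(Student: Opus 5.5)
The plan is to show that $M \subset \fp^n F$ after localizing, for every $n$, and then use the Krull intersection theorem (Theorem~\ref{thm:krull}) coordinatewise in $F$. Write $M_{\fp} \subset F_{\fp} = A_{\fp} \otimes_k V$. Localization is exact, so $M_{\fp}$ is a submodule of $F_{\fp}$. Choose a basis $\{v_j\}$ of $V$; then $F_{\fp}$ is a free $A_{\fp}$-module with basis $\{1 \otimes v_j\}$, and $\fp^n F_{\fp} = \bigoplus_j \fp^n A_{\fp} \otimes v_j$. By Theorem~\ref{thm:krull}, $\bigcap_n \fp^n F_{\fp} = \bigoplus_j \bigl(\bigcap_n \fp^n A_{\fp}\bigr) v_j = 0$. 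Note that an element of a direct sum lies in the intersection exactly when each coordinate does, so the finite length of $V$ is irrelevant. It therefore suffices to prove $M_{\fp} \subset \fp^n F_{\fp}$ for all $n$.

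For the inclusion, the hypothesis $M_{\fp} = \fp M_{\fp}$ gives $M_{\fp} = \fp^n M_{\fp}$ for every $n$ by iteration: if $M_{\fp} = \fp^n M_{\fp}$, then $M_{\fp} = \fp^n(\fp M_{\fp}) = \fp^{n+1} M_{\fp}$. Since $M_{\fp} \subset F_{\fp}$, we get $M_{\fp} = \fp^n M_{\fp} \subset \fp^n F_{\fp}$. Hence
\[
M_{\fp} \subset \bigcap_{n} \fp^n F_{\fp} = 0,
\]
which proves the claim.

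The argument uses neither $\GL$-stability of $M$ nor finite generation. The only real input is Theorem~\ref{thm:krull}, applied to $A$ reduced and finitely $\GL$-generated with $\fp$ a $\GL$-stable prime, which is exactly our hypothesis. The one step to check is that $\fp^n A_{\fp}$ means the extended ideal $(\fp A_{\fp})^n$, so that the identification $\fp^n F_{\fp} = \bigoplus_j \fp^n A_{\fp} v_j$ holds. This is clear because $F_{\fp}$ is free with a $k$-rational basis. I do not expect a serious obstacle; everything reduces to the cited Krull intersection theorem.
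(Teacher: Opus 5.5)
Your proof is correct and follows the paper's argument essentially verbatim: you show $\bigcap_n \fp^n F_{\fp}=0$ coordinatewise via the Krull intersection theorem (Theorem~\ref{thm:krull}), then iterate $M_{\fp}=\fp M_{\fp}$ to get $M_{\fp}=\fp^n M_{\fp}\subset \fp^n F_{\fp}$ for all $n$. The paper likewise notes that finite length of $V$ is not needed.
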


\begin{proof}
First notice that $\bigcap_{n \ge 1} \fp^n F_{\fp}=0$. Indeed, if we pick a basis of $V$ then the coefficients of an element of this module belong to $\bigcap_{n \ge 1} \fp^n$, and thus vanish (Theorem~\ref{thm:krull}). It follows that $\bigcap_{n \ge 1} \fp^n M_{\fp}=0$ too, as $\fp^n M_{\fp} \subset \fp^n F_{\fp}$ for each $n$. Now, suppose $M_{\fp}/\fp M_{\fp}=0$. This means that $M_{\fp}=\fp M_{\fp}$, and so $M_{\fp}=\fp^n M_{\fp}$ for each $n \ge 1$. Thus $M_{\fp} = \bigcap_{n \ge 1} \fp^n M_{\fp}=0$.
\end{proof}

In fact, the conclusion of the above proposition can be improved:

\begin{proposition} \label{prop:nak3}
In the above setting, there is $f \in A \setminus \fp$ such that $M[1/f]=0$.
\end{proposition}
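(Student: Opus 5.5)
The idea is to avoid any finiteness property of $M$ and to exploit the ambient module $F=A\otimes_k V$, whose behaviour under localization is governed entirely by $A$. By Proposition~\ref{prop:nak2} we already know $M_{\fp}=0$. Localization is exact, so $M_{\fp}\to F_{\fp}$ is injective. Hence the image of $M$ in $F_{\fp}$ vanishes, that is,
\[
M\subset \ker(F\to F_{\fp}) = J\otimes_k V, \qquad J:=\ker(A\to A_{\fp}).
\]
Here $J$ is the ideal of elements $a\in A$ with $sa=0$ for some $s\notin\fp$. The identification of the kernel uses that $F$ is free on a $k$-basis of $V$, so it is computed coefficientwise; this requires no finiteness of $V$. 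It therefore suffices to find $f\in A\setminus\fp$ with $J[1/f]=0$, since then $M[1/f]\subset J[1/f]\otimes_k V=0$.

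To construct $f$, I will use that $X=\Spec(A)$ is a noetherian topological space. This is a standard result from \cite{polygeom} for $\GL$-varieties, so $A$ has finitely many minimal primes $\fq_1,\dots,\fq_r$. First, for each $\fq_i\subset\fp$ I claim $J\subset\fq_i$. Indeed $J_{\fp}=0$, so its further localization $J_{\fq_i}=0$, and since $A$ is reduced $A_{\fq_i}$ is a field. The map $J\to A_{\fq_i}$ is therefore zero, which forces $J\subset\fq_i$. Second, for each $\fq_j\not\subset\fp$ I choose $f_j\in\fq_j\setminus\fp$, and I set $f=\prod_j f_j$; then $f\notin\fp$ because $\fp$ is prime.

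Now $A[1/f]$ is reduced, and its minimal primes are the extensions of those $\fq_i$ not containing $f$. Every $\fq_j\not\subset\fp$ contains $f$, so these minimal primes are among the $\fq_i$ with $\fq_i\subset\fp$. Consequently
\[
J[1/f]\subset \bigcap_{\fq_i\subset\fp}\fq_i[1/f],
\]
and the right-hand side is the nilradical of $A[1/f]$, which is $0$.

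I do not expect a serious obstacle here. The only inputs beyond formal manipulations are finiteness of the set of minimal primes (noetherianity of $X$) and reducedness of $A$. The step requiring the most care is the identification $\ker(F\to F_{\fp})=J\otimes V$ together with $M_{\fp}\hookrightarrow F_{\fp}$, which is where Proposition~\ref{prop:nak2} is used. If one also wants $f$ to be $G(n)$-invariant, the argument above does not give this directly; one could try choosing the $f_j$ as invariants by using that each $\fq_j$ is $\GL$-stable. The statement as given, however, does not require invariance.
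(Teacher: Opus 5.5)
Your proof is correct and is essentially the paper's argument: invert an $f\notin\fp$ lying in every minimal prime not contained in $\fp$, then combine $M_{\fp}=0$ with reducedness. The paper phrases the last step as every $s\in A\setminus\fp$ becoming a non-zerodivisor in $A[1/f]$, whereas you phrase it as $\ker(A\to A_{\fp})$ lying in all the surviving minimal primes.

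One correction to your justification: $\Spec(A)$ is not a noetherian topological space in general (already $\Spec k[x_1,x_2,\ldots]$ has the strictly descending chain $V(x_1)\supset V(x_1,x_2)\supset\cdots$). The only thing you use, and the paper also uses implicitly, is that $A$ has finitely many minimal primes. That follows from $\GL$-noetherianity, since a $\GL$-irreducible $\GL$-variety is irreducible.
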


\begin{proof}
We can discard the irreducible components of $\Spec(A)$ not containing $\fp$; in other words, we can assume that $\fp$ contains every minimal prime. After making this reduction, we must show $M=0$. Let $\{e_i\}_{i \in I}$ be a basis of $F$ and let $x=\sum_{i \in I} a_i e_i$ be an element of $M$. Since $M_{\fp}=0$, there is some $s \in A \setminus \fp$ such that $sx=0$ holds in $M$, meaning $sa_i=0$ in $A$ for all $i$. But $s$ does not belong to any minimal prime of $A$, and so $s$ is a non-zerodivisor in $A$. Thus $a_i=0$ for all $i$, and so $x=0$.
\end{proof}

\subsection{Proof of the main theorem}

We prove Theorem~\ref{thm:flat} in a sequence of lemmas. Fix notation as in the theorem, i.e., $A$ is a reduced finitely $\GL$-generated $\GL$-algebra, $X=\Spec(A)$, $M$ is a finitely $\GL$-generated $A$-module, and $U \subset X$ is the set of primes $\fp$ such that $M_{\fp}$ is a flat $A_{\fp}$-module.

\begin{lemma} \label{lem:flat1}
Let $\fp$ be a $\GL$-stable prime in $U$. Then $U$ contains an open neighborhood $V$ of $\fp$, and $M$ is locally free over $V$. (We do not require this $V$ to be $\GL$-stable.)
\end{lemma}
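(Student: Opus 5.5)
We want: if $\fp$ is $\GL$-stable and $M_{\fp}$ is flat over $A_{\fp}$, then $M$ is free of finite rank on a (possibly non-$\GL$-stable) open neighborhood of $\fp$. The plan is to choose a map $F = A \otimes_k V \to M$ from a finite-length polynomial representation that becomes an isomorphism at $\fp$. We then show, using the Nakayama-type statements above, that the kernel and cokernel vanish after inverting a single element $f \notin \fp$. Since $\fp$ is $\GL$-stable, its residue field $\kappa(\fp)$ carries a $\GL$-action. The fiber $M_{\fp}/\fp M_{\fp} = \kappa(\fp) \otimes_A M$ is then a finitely $\GL$-generated $\kappa(\fp)$-module in the category of polynomial representations.

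\textbf{Step 1: the fiber is free.} We first argue that $\kappa(\fp)\otimes_A M \cong \kappa(\fp)\otimes_k V$ for a finite-length polynomial representation $V$, after shifting to some $G(n)$. We would obtain this from generic freeness (Theorem~\ref{thm:genfree}) applied to $M/\fp M$ over the integral $\GL$-algebra $A/\fp$. This gives $n$, an element $\bar h \ne 0$ that is $G(n)$-invariant, and an isomorphism $(M/\fp M)[1/\bar h] \cong V \otimes (A/\fp)[1/\bar h]$. Lifting $\bar h$ to a $G(n)$-invariant $h \in A\setminus \fp$ and replacing $A$ by $A[1/h]$ and $\GL$ by $G(n)$ is harmless for the statement, since $V$ need not be $\GL$-stable. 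By semisimplicity, $V \to M/\fp M[1/h]$ lifts to a $G(n)$-equivariant map $V \to M[1/h]$, giving $\phi \colon F = A[1/h]\otimes V \to M[1/h]$ such that $\phi \otimes \kappa(\fp)$ is an isomorphism.

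\textbf{Step 2: the cokernel.} Let $C = \operatorname{coker}\phi$. It is a quotient of $M$, hence finitely $\GL$-generated, and $C_\fp/\fp C_\fp = 0$. Proposition~\ref{prop:nak1} gives $C_\fp = 0$. Because $C$ is finitely generated by orbits of finitely many elements, each of which is killed by some element outside $\fp$, we get a single $f \notin \fp$ with $C[1/f]=0$. Indeed, the argument in Proposition~\ref{prop:nak1} actually works at a finite level $k^N$, which produces finitely many annihilators; their product is $f$.

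\textbf{Step 3: the kernel.} Let $K = \ker\phi \subset F$, a submodule of a module of the form $A\otimes V$. Flatness of $M_\fp$ gives $\Tor_1^{A_\fp}(M_\fp,\kappa(\fp))=0$. Combined with $C_\fp = 0$, the sequence $0\to K_\fp\to F_\fp\to M_\fp\to 0$ stays exact after applying $\kappa(\fp)\otimes -$. Since $F\otimes\kappa(\fp)\to M\otimes\kappa(\fp)$ is an isomorphism, $K_\fp/\fp K_\fp=0$. Proposition~\ref{prop:nak3} then gives $f'\notin\fp$ with $K[1/f']=0$; this step uses Theorem~\ref{thm:krull}, and the reducedness of the algebra after inverting $h$ is preserved. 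Therefore $M[1/hff'] \cong F[1/hff']$ is free, and $V = X[1/hff']$ is the desired open neighborhood of $\fp$ contained in $U$.

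The main obstacle I expect is Step 1. We must make sure the shift-and-localize operations needed for generic freeness of $M/\fp M$ really yield an equivariant lift inducing an isomorphism on the fiber at $\fp$, not merely at the generic point of $A/\fp$. This is fine precisely because $\fp$ is the generic point of $\Spec(A/\fp)$, so $\kappa(\fp)$ is the localization of $(A/\fp)[1/\bar h]$.
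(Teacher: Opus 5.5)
Your outline matches the paper's proof of this lemma step for step: generic freeness for $M/\fp M$ over $A/\fp$, an equivariant lift $V\to M$ by semisimplicity, Proposition~\ref{prop:nak1} for the cokernel at $\fp$, flatness to get $K_\fp/\fp K_\fp=0$, and Propositions~\ref{prop:nak2} and~\ref{prop:nak3} to kill $K$ after inverting one element outside $\fp$. Steps 1 and 3 are fine.

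\textbf{The gap.} It lies in Step 2, in the passage from $C_\fp=0$ to $C[1/f]=0$ for a single $f\notin\fp$. Your justification does not work. $C$ is generated by the $\GL$-\emph{orbits} of $x_1,\dots,x_r$, and $gx_i$ is killed by $gs_i$, not by $s_i$, so a finite product of annihilators of the generators need not kill $C$. Likewise, the element produced at level $k^N$ in the proof of Proposition~\ref{prop:nak1} annihilates $C\{k^N\}$, not $C$.

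Here is a concrete case. Take $A=\Sym(\bV)=k[t_1,t_2,\dots]$, $\fp=0$ and $C=(A\otimes\bV)/A\cdot\Sym^2(\bV)$. The element $t_1$ kills $C\{k^1\}=k[t_1]e_1/(t_1e_1)$. But $t_1e_2=-t_2e_1$ is the nonzero class of $e_1\otimes e_2$ in the piece $(\bV\otimes\bV)/\Sym^2(\bV)\cong\bigwedge^2\bV$ of $C$. In this example $C[1/t_1]=0$ does hold, but only because $t_1^2C=0$, which comes from the module structure and not from a product of finitely many annihilators.

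The paper's own proof asserts this step in one line (``since $M$ is finitely generated''), so you are not missing an ingredient the paper supplies. Still, as written the step is unjustified. Separately, $\kappa(\fp)$ is not a polynomial representation, so your preliminary remark about $\kappa(\fp)\otimes_A M$ should be dropped; you never use it.

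\textbf{A fix using what you already have.}

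Since $\phi\otimes_A A/\fp$ is an isomorphism over all of $A[1/h]$, not just at $\fp$, you have $C=\fp C$ globally. Let $U\subset C$ be a finite length generating subrepresentation. The map $\fp\otimes_k U\to\fp C=C$, $p\otimes u\mapsto pu$, is a surjection of polynomial $G(n)$-representations. By semisimplicity the inclusion $U\subset C$ therefore lifts to an equivariant $\mu\colon U\to\fp\otimes_k U$. If $\tilde\mu$ denotes its $A$-linear extension, then $1-\tilde\mu$ maps $A\otimes U$ into the kernel of $A\otimes U\to C$.

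Write $\mu=\mu_0+\nu$, where $\mu_0$ is the component in $(\fp\cap A^{G(n)})\otimes U$ and $\nu$ the component of positive degree in $\fp$.
\begin{itemize}
\item Since $\mu$ preserves degree, $\tilde\nu$ strictly lowers the degree in $U$, and so is nilpotent.
\item By Schur's lemma, $\mu_0$ acts on each isotypic piece $\bS_\lambda\otimes Q_\lambda$ of $U$ as $\id\otimes m_\lambda$, with $m_\lambda$ a square matrix over $\fp\cap A^{G(n)}$.
\end{itemize}
Set $\delta=\prod_\lambda\det(1-m_\lambda)$, which is $G(n)$-invariant and congruent to $1$ modulo $\fp$. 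After inverting $\delta$, the factorization $1-\tilde\mu=(1-\tilde\mu_0)\bigl(1-(1-\tilde\mu_0)^{-1}\tilde\nu\bigr)$ shows that $1-\tilde\mu$ is invertible. Hence $C[1/\delta]=0$ with $\delta\notin\fp$. This is the single element you need, and it even makes the appeal to Proposition~\ref{prop:nak1} redundant.
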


\begin{proof}
Let $B=A/\fp$, which is an integral $\GL$-algebra. Consider the $B$-module $N=M/\fp M$. By generic freeness (Theorem~\ref{thm:genfree}), there is a non-zero $G(n)$-invariant element $h \in B$ such that $N[1/h]$ is free as a $G(n)$-equivariant $B[1/h]$-module, i.e., there is a finite length $G(n)$-subrepresentation $W \subset N[1/h]$ such that the natural map $B[1/h] \otimes W \to N[1/h]$ is an isomorphism. For notational simplicity, pick $\tilde{h} \in A$ lifting $h$, and replace all objects by their localizations at $\tilde{h}$. Thus we now have $W \subset N$ such that $B \otimes W \to N$ is an isomorphism.

Let $V$ be a $G(n)$-subrepresentation of $M$ mapping isomorphically to $W$. Let $F=A \otimes V$, let $F \to M$ be the natural map, and let $K$ be its kernel. Note that $F/\fp F \to M/\fp M$ is an isomorphism, and so by Nakayama's lemma (Proposition~\ref{prop:nak1}), $F_{\fp} \to M_{\fp}$ is surjective. We thus see that the sequence
\begin{displaymath}
0 \to K_{\fp} \to F_{\fp} \to M_{\fp} \to 0
\end{displaymath}
is exact. Since $M_{\fp}$ is flat, it follows that the sequence is exact after reducing modulo $\fp$. We thus find $K_{\fp}/\fp K_{\fp}=0$, and so $K_{\fp}=0$ by the variant of Nakayama's lemma (Proposition~ \ref{prop:nak2}). Moreover, by shifting and inverting an element outside of $\fp$, we have $K=0$ (Proposition~\ref {prop:nak3}). Thus $F \to M$ is injective, and $(M/F)_{\fp}=0$. Since $M$ is finitely generated, there is a single $h \not\in \fp$ such that $(M/F)[1/h]=0$. We thus see that $F \to M$ is an isomorphism over an open set $V$ containing $\fp$, and so $V \subset U$ as required.
\end{proof}

\begin{lemma} \label{lem:flat2}
The set $U$ is open, and $M$ is locally free over $U$.
\end{lemma}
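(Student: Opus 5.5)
Fix an arbitrary $\fq \in U$; we show that $U$ contains an open neighborhood of $\fq$ over which $M$ is locally free. Let $\fp$ be the unique $\GL$-stable prime in the generalized $\GL$-orbit of $\fq$; this is the generic point of the closure of the orbit of $\fq$, so $\fp \subset \fq$. Since $A_{\fp}$ is a localization of $A_{\fq}$, and $M_{\fq}$ is flat over $A_{\fq}$, the localization $M_{\fp}$ is flat over $A_{\fp}$. Hence $\fp \in U$.

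Lemma~\ref{lem:flat1} then gives an open neighborhood $V$ of $\fp$ with $V \subset U$ and $M|_V$ locally free. The point now is to pass from $\fp$ back to $\fq$. The set $V$ is open and contains $\fp$, and $\fq$ lies in the closure of the orbit of $\fp$. This alone is not enough, since $V$ need not be $\GL$-stable, so we use the group action. The translates $gV$, for $g \in \GL$, are again open. Because $g$ acts on $A$ and $M$ compatibly, $M$ is locally free on $gV$, and $gV \subset U$ since $U$ is visibly $\GL$-stable (flatness of $M_{\fp}$ transports to $M_{g\fp}$). So $U' = \bigcup_{g} gV$ is a $\GL$-stable open set contained in $U$ on which $M$ is locally free.

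A $\GL$-stable open set is a union of generalized orbits; this is the fact recalled in \S\ref{ss:glvar} for locally closed $\GL$-stable sets. Concretely, if $\fq \notin U'$, then the closed $\GL$-stable complement $X \setminus U'$ would contain $\fq$ and hence the closure of its orbit, which contains $\fp$. This is a contradiction, so $\fq \in U'$. Therefore $U = U'$ is open, and $M$ is locally free over $U$.

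The step I expect to need the most care is the claim that the generalized orbit of $\fq$ contains $\fp$ with $\fp \subset \fq$, together with the stability of flatness under generization. Both are standard: the first is from \cite[\S 3]{polygeom} as recalled earlier, and the second is localization of a flat module. The remaining ingredient, the compatibility of local freeness with translation by $g$, is formal.
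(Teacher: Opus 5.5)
Your proof is correct and follows the paper's argument essentially step for step, with the roles of $\fp$ and $\fq$ swapped. You pass to the $\GL$-stable generization, apply Lemma~\ref{lem:flat1}, replace the neighborhood by the union of its $\GL$-translates, and use that a $\GL$-stable open set containing the invariant point contains the whole generalized orbit. The only nit is your closing ``$U=U'$'': it should read $\fq\in U'\subset U$, since $U'$ depends on $\fq$.
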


\begin{proof}
Let $\fp \in U$. Let $Z \subset X$ be the $\GL$-orbit closure of $\fp$, which is an irreducible $\GL$-variety, and let $\fq$ be the generic point of $Z$, which is a $\GL$-stable prime ideal of $A$. Since $\fp \in Z$ we have $\fq \subset \fp$, and so $M_{\fq}$ is a localization of $M_{\fp}$. Since $M_{\fp}$ is flat, it follows that $M_{\fq}$ is flat. By Lemma~\ref{lem:flat1}, we see that there is an open neighborhood $V$ of $\fq$ that is contained in $U$, and $M$ is locally free over $V$. Of course, $U$ is $\GL$-stable, and so we can assume that $V$ is as well (just replace $V$ by the union of its $\GL$-translates). Since $V$ is a $\GL$-stable open set containing $\fq$, it contains the entire generalized orbit of $\fp$, and in particular contains $\fq$. This completes the proof.
\end{proof}

\begin{lemma}
The set $U$ is dense in $X$.
\end{lemma}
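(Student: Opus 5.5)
We show that $U$ contains the generic point of every irreducible component of $X$. This is enough, because $U$ is open by Lemma~\ref{lem:flat2}, and an open set containing all generic points of irreducible components is dense.

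\emph{Step 1: the generic points are $\GL$-stable.} Let $\fq$ be a minimal prime of $A$. The irreducible components of a $\GL$-variety are $\GL$-stable, since $\GL$ is connected in the relevant sense. We can also see this directly. Each component is a closed set, and $\GL$ permutes the finitely many components (finitely many by the noetherianity of $\GL$-varieties from \cite{polygeom}). Each $g \in \GL$ lies in some $\GL_N$, which is connected, so $g$ fixes every component. Hence $\fq$ is $\GL$-stable. This puts us in the setting where the $\GL$-versions of Nakayama's lemma apply.

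\emph{Step 2: $M_\fq$ is flat.} Since $A$ is reduced and $\fq$ is minimal, $A_\fq$ is a field; write $\kappa=A_\fq=\kappa(\fq)$. Every module over a field is free, so $M_\fq$ is flat and $\fq\in U$.

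At first sight this step looks trivial enough to question. The point is that reducedness makes $A_\fq$ equal to its residue field $\kappa(\fq)$: the ring $A_\fq$ is a zero-dimensional reduced local ring, hence a field. So flatness at the generic points is automatic, and Steps 1 and 2 together give the lemma.

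The only possible subtlety is whether $X$ has finitely many irreducible components. We do not actually need finiteness: every point of $X$ specializes from some minimal prime, and every minimal prime lies in $U$. So every nonempty open subset of $X$ contains a point that generizes to a minimal prime, and since $U$ is open it contains that generization. If finiteness of the components were needed for some other reason, it would be the one step to cite from \cite{polygeom}. Otherwise, the main obstacle is simply recognizing that reducedness makes this immediate.
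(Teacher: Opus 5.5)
Your proof is correct, but it takes a different and more elementary route than the paper.

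\textbf{The paper's route.} The paper gets density from generic freeness (Theorem~\ref{thm:genfree}), applied separately on each irreducible component. That is, it uses the $\GL$-equivariant structure theory of finitely $\GL$-generated modules over integral $\GL$-algebras.

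\textbf{Your route.} You note that for a reduced ring $A$ and a minimal prime $\fq$, the localization $A_\fq$ is a zero-dimensional reduced local ring, hence a field. So $M_\fq$ is free and $\fq \in U$. Every prime contains a minimal prime, and open sets are stable under generization. Hence every nonempty open subset of $\Spec(A)$ contains a minimal prime, so the minimal primes are dense, and therefore so is $U$.

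\textbf{What each buys.} Your argument works for an arbitrary module over an arbitrary reduced ring. It needs no $\GL$-structure, no finite $\GL$-generation of $M$, and no finiteness of the set of irreducible components. The paper's route yields more: after shifting and localizing, $M$ is equivariantly free on an open set meeting each component. That extra information is not needed for density, which is all this lemma asks.

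\textbf{Clean-ups.}
\begin{enumerate}
\item Step~1 is never used, since you never invoke the $\GL$-versions of Nakayama's lemma. It can simply be dropped.
\item In your last paragraph, the openness that matters is that of the arbitrary nonempty open set $V$, not that of $U$. Because $V$ is open, for any $\fp \in V$ it contains a minimal prime $\fq \subseteq \fp$, and $\fq \in U$ by Step~2.
\item Density of $U$ does not use Lemma~\ref{lem:flat2} at all. Any subset of $\Spec(A)$ containing all minimal primes is dense. Openness of $U$ is needed only for the ``open'' half of Theorem~\ref{thm:flat}(a).
\end{enumerate}
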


\begin{proof}
This follows from generic freeness (Theorem~\ref{thm:genfree}), applied separately on each irreducible component of $X$.
\end{proof}

We have thus proved parts (a) and (b) of Theorem~\ref{thm:flat}. We now turn to (c). Let $Z$ be the complement of $U$ in $X$, which is a closed $\GL$-subvariety. Wrtie $X_n$ for $X\{k^n\}$, and similarly for other objects. Let $W_n \subset X_n$ be the locus where $M_n$ is not flat. Our goal is to compare $Z$ and the $W_n$'s. We begin with a general observation.

\begin{lemma} \label{lem:flat-invar}
Let $B$ be a $\GL$-algebra and let $N$ be a $B$-module. If $N$ is flat over $B$ then $N^{\GL}$ is flat over $B^{\GL}$.
\end{lemma}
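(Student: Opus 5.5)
The plan is to exploit the fact that taking $\GL$-invariants is an exact functor on polynomial representations, compatible with tensor products in the appropriate relative sense. The idea is to express $N^{\GL} \otimes_{B^{\GL}} -$ as the $\GL$-invariants of $N \otimes_B (B \otimes_{B^{\GL}} -)$, and then conclude by exactness.

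First I will record the decomposition. Since polynomial representations are graded, with the degree~0 piece equal to the $\GL$-invariants, we have $B = B^{\GL} \oplus B_{+}$, where $B_{+}$ is the sum of the positive-degree pieces. Likewise $N = N^{\GL} \oplus N_{+}$. Because the multiplication $B \otimes N \to N$ respects degree, $N^{\GL}$ is a $B^{\GL}$-module, and $N_{+}$ is a $B^{\GL}$-submodule. Thus $N^{\GL}$ is a $B^{\GL}$-module direct summand of $N$. The same holds for $B$ itself, so $B$ is a $B^{\GL}$-module in which $B^{\GL}$ sits as a direct summand.

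Now let $P$ be an ordinary $B^{\GL}$-module, with trivial action. I claim there is a natural isomorphism
\[
N^{\GL} \otimes_{B^{\GL}} P \cong \bigl(N \otimes_B (B \otimes_{B^{\GL}} P)\bigr)^{\GL} = (N \otimes_{B^{\GL}} P)^{\GL}.
\]
This holds because $N \otimes_{B^{\GL}} P = (N^{\GL} \otimes_{B^{\GL}} P) \oplus (N_{+} \otimes_{B^{\GL}} P)$, and the grading is preserved, so the degree~0 part is exactly $N^{\GL} \otimes_{B^{\GL}} P$.

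Given an injection $P' \to P$ of $B^{\GL}$-modules, I would argue as follows. The map $B \otimes_{B^{\GL}} P' \to B \otimes_{B^{\GL}} P$ need not be injective, so flatness of $N$ over $B$ cannot be applied to it naively. A simpler route is available. Flatness of $N$ over $B$ together with flatness of $B$ over $B^{\GL}$ would give flatness of $N$ over $B^{\GL}$, but the latter flatness is not clear either. The more robust approach is to use the direct-summand structure directly. The map $N^{\GL} \otimes_{B^{\GL}} P' \to N^{\GL} \otimes_{B^{\GL}} P$ is the degree~0 component of $N \otimes_{B^{\GL}} P' \to N \otimes_{B^{\GL}} P$. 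Moreover $N \otimes_{B^{\GL}} P \cong N \otimes_B (B \otimes_{B^{\GL}} P)$. So it suffices to show that $\Tor_1^B(N, B\otimes_{B^{\GL}} -)$-type obstructions vanish in degree~0.

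The cleanest way to do this is via Tor. Take a free resolution $F_\bullet \to P$ over $B^{\GL}$. Then $B \otimes_{B^{\GL}} F_\bullet$ is a complex of free $B$-modules whose degree~0 part is $F_\bullet$, which is exact in positive homological degrees. Tensoring with $N$ over $B$ and taking the degree~0 part gives $N^{\GL} \otimes_{B^{\GL}} F_\bullet$. Since $N$ is $B$-flat, $N \otimes_B (B \otimes F_\bullet) = N \otimes_{B^{\GL}} F_\bullet$ computes $N \otimes_B L(B \otimes^L_{B^{\GL}} P)$. Its homology is $N \otimes_B H_*(B \otimes F_\bullet)$, and taking invariants is exact. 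The degree~0 part of $H_i(B \otimes F_\bullet)$ is $H_i(F_\bullet) = 0$ for $i > 0$.

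The main obstacle is therefore to show that the degree~0 part of $N \otimes_B H$ depends only on the degree~0 part of $H$ and vanishes when that part does. This fails in general, since $N_{+} \otimes B_{+}$-type products can contribute to degree~0 only if degrees could be negative, and they cannot. Degrees are nonnegative, so the degree~0 part of $N \otimes_B H$ is the quotient of $N^{\GL} \otimes H^{\GL}$, which is zero. Hence $\Tor_i^{B^{\GL}}(N^{\GL}, P) = 0$ for $i > 0$, and $N^{\GL}$ is flat over $B^{\GL}$.
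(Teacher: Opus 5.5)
Your proof is correct, but it changes rings in the opposite direction from the paper. The paper takes a $B^{\GL}$-module $Q$ and regards it as a $B$-module by restriction along the augmentation $B \to B/B_+ = B^{\GL}$. Flatness gives $\Tor^B_i(N,Q)=0$, and the paper then takes invariants, using $\Tor^B_i(N,Q)^{\GL} = \Tor^{B^{\GL}}_i(N^{\GL},Q)$. That identification implicitly computes $\Tor^B$ with an equivariant free resolution $B \otimes V_\bullet \to N$, whose invariant part $B^{\GL} \otimes V_\bullet^{\GL}$ is a free resolution of $N^{\GL}$.

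You instead resolve $P$ by ordinary free $B^{\GL}$-modules and extend scalars along the inclusion $B^{\GL} \to B$. Flatness of $N$ then gives an equivariant isomorphism $\Tor^{B^{\GL}}_i(N,P) \cong N \otimes_B \Tor^{B^{\GL}}_i(B,P)$. You conclude by taking invariants, using $(N \otimes_B H)^{\GL} = N^{\GL} \otimes_{B^{\GL}} H^{\GL}$ together with $\Tor^{B^{\GL}}_i(B,P)^{\GL} = \Tor^{B^{\GL}}_i(B^{\GL},P) = 0$ for $i>0$. Your route needs no equivariant resolution of $N$, only the nonnegativity of the grading, at the cost of a longer computation. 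The paper's is a one-liner once compatibility of invariants with $\Tor$ is granted.

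Two clean-up remarks. First, delete the sentence ``This fails in general'': it contradicts what you then correctly prove. For any equivariant $B$-module $H$, a relation $bn \otimes h - n \otimes bh$ of total degree $0$ has all three factors in degree $0$, so $(N \otimes_B H)^{\GL} = N^{\GL} \otimes_{B^{\GL}} H^{\GL}$ always holds. Second, the direct approach you abandoned in your fourth paragraph works if you restrict along $B \to B/B_+$ instead of extending along $B^{\GL} \to B$. For an injection $Q' \to Q$ of $B^{\GL}$-modules, $N \otimes_B Q' \to N \otimes_B Q$ is injective by flatness. Its invariant part is $N^{\GL} \otimes_{B^{\GL}} Q' \to N^{\GL} \otimes_{B^{\GL}} Q$, which proves the lemma without any $\Tor$.
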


\begin{proof}
Let $Q$ be an arbitrary $B_0$-module, and regard it as a $B$-module via the identification $B_0=B/B_+$. Since $N$ is $B$-flat, we have $\Tor^B_i(N, Q)=0$ for all $i>0$. Taking $\GL$-invariants, which is compatible with $\Tor$, we find $\Tor^{B_0}_i(N_0, Q)=0$ for all $i>0$. Thus $N_0$ is flat over $B_0$.
\end{proof}

The next two lemmas complete the proof of statement (c).

\begin{lemma}
We have $W_n \subset Z_n$ for all $n$.
\end{lemma}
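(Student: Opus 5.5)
The plan is to realize the localization of $M_n=M\{k^n\}$ at a point of $X_n \setminus Z_n$ as the $G(n)$-invariants of a suitable $G(n)$-equivariant localization of $M$. Flatness is then transferred by Lemma~\ref{lem:flat-invar}, applied with $G(n)$ in place of $\GL$; this is legitimate since $G(n)\cong\GL$. Recall that $A_n=A\{k^n\}=A^{G(n)}$, and similarly $M_n=M^{G(n)}$ and $Z_n=\Spec(A_n/I^{G(n)})$, where $I$ is the radical $\GL$-stable ideal defining $Z$.

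Let $\fq$ be a prime of $A_n$ not in $Z_n$, i.e., $I^{G(n)}\not\subset\fq$; we must show that $(M_n)_{\fq}$ is flat over $(A_n)_{\fq}$. Put $S=A_n\setminus\fq$. This is a multiplicative set of $G(n)$-invariant elements, so $B=S^{-1}A$ is a $G(n)$-algebra (a polynomial $G(n)$-representation) and $N=S^{-1}M$ is a $B$-module in our sense. Since $G(n)$-invariants are exact and commute with localization at invariant elements, we get $B^{G(n)}=S^{-1}A_n=(A_n)_{\fq}$ and $N^{G(n)}=(M_n)_{\fq}$. So by Lemma~\ref{lem:flat-invar} it suffices to show that $N$ is flat over $B$.

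Flatness is local on $\Spec B$, so it is enough to check that every prime $P$ of $A$ with $P\cap S=\emptyset$ lies in $U$; then $N_P=M_P$ is flat over $B_P=A_P$ by the definition of $U$. Suppose instead that $P\in Z$, i.e., $I\subset P$. Then $I^{G(n)}\subset P\cap A_n\subset\fq$, since $P\cap S=\emptyset$ forces $P\cap A_n\subset \fq$. This contradicts $\fq\notin Z_n$. Hence every prime of $B$ is in $U$, so $N$ is $B$-flat, and therefore $(M_n)_{\fq}$ is flat over $(A_n)_{\fq}$. This shows $W_n\subset Z_n$.

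The only delicate point is the identification of invariants: one needs $(S^{-1}M)^{G(n)}=S^{-1}(M^{G(n)})$ and $Z\{k^n\}=V(I^{G(n)})$. Both follow from exactness of $(-)^{G(n)}$ on polynomial representations and the fact that elements of $S$ are invariant. Note also that Lemma~\ref{lem:flat-invar} requires no finiteness hypotheses on $B$, which matters because $B$ is a localization and need not be finitely $G(n)$-generated.
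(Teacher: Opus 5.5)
Your proof is correct and follows the paper's argument. You pass to a $G(n)$-invariant localization of $A$ all of whose primes avoid $Z$, get flatness of the localized $M$ there, and take $G(n)$-invariants via Lemma~\ref{lem:flat-invar}; the only cosmetic difference is that you localize at the whole set $A_n\setminus\fq$ and check primes directly, whereas the paper inverts a single $h\in A_n$ with $X_n[1/h]\subset X_n\setminus Z_n$ and notes that $X[1/h]=\pi^{-1}(X_n[1/h])$ misses $Z$.
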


\begin{proof}
We must show that $M_n$ is flat over $X_n \setminus Z_n$. Thus let $x \in X_n \setminus Z_n$ be given. Choose $h \in A_n$ such that $X_n[1/h]$ is an open neighborhood of $x$ contained in $X_n \setminus Z_n$. We have a natural surjective map $\pi \colon X \to X_n$, under which $Z$ maps to $Z_n$. Since $X_n[1/h]$ does not meet $Z_n$, it follows that $\pi^{-1}(X_n[1/h])=X[1/h]$ does not meet $Z$, and so $M$ is flat over $X[1/h]$. Algebraically, this means that $M[1/h]$ is a flat $A[1/h]$-module. Taking $G(n)$-invariants and applying Lemma~\ref{lem:flat-invar}, we see that $(M_n)[1/h]$ is flat as an $(A_n)[1/h]$-module, which shows in particular that $M_n$ is flat at $x$, as required. Note that inverting $h$ commutes with forming of $G(n)$-invariants.
\end{proof}

\begin{lemma}
We have $W_n=Z_n$ for all $n \gg 0$.
\end{lemma}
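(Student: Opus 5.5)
Since $W_n \subset Z_n$ is already known, it remains to show $Z_n \subset W_n$ for $n \gg 0$: every point of $Z_n$ is a non-flat point of $M_n$. The strategy is to reduce to the generic points of the finitely many $\GL$-stable pieces of $Z$, handle one such generic point $\fq$ using the structure obtained in the proof of Lemma~\ref{lem:flat1}, and then spread out. Since $Z$ is a closed $\GL$-subvariety, it has finitely many irreducible components, each $\GL$-stable with $\GL$-stable generic point $\fq$; and $Z_n$ is the union of the $(Z_i)_n$. So it suffices to treat one irreducible $\GL$-stable closed $Y=V(\fq) \subset Z$ and show $Y_n \subset W_n$ for $n\gg0$. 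But $W_n$ is closed, and $Y_n$ is the closure of the image of $\fq$ (as $A_n \to A/\fq$ has kernel $\fq_n$). Hence it is enough to show that $M_n$ is not flat at $\fq_n$ for $n \gg 0$.

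To analyze $\fq$, I would run the argument of Lemma~\ref{lem:flat1} in reverse. Put $B=A/\fq$. By generic freeness, after shifting to $G(m)$ and inverting a $G(m)$-invariant $\tilde h \notin \fq$, choose $V \subset M$ with $F=A\otimes V \to M$ such that $F/\fq F \cong M/\fq M$. By Proposition~\ref{prop:nak1}, after further localization we may assume $F_\fq \to M_\fq$ is surjective. Let $K$ be the kernel. Since $M_\fq$ is not flat (as $\fq \in Z$, using (b)), $K_\fq\neq 0$. Because $A_\fq$ has a free module $F_\fq$ with $F_\fq/\fq F_\fq\cong M_\fq/\fq M_\fq$, non-flatness of $M_\fq$ is then equivalent to $K_\fq \ne 0$, and Proposition~\ref{prop:nak2} gives $K_\fq/\fq K_\fq \neq 0$ (i.e.\ $K_\fq \not\subset \fq F_\fq$). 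Concretely, pick an element $\kappa \in K$ whose image in $F_\fq$ is nonzero. Proposition~\ref{prop:nak2} and the Krull intersection theorem guarantee that $\kappa \notin \fq^{e}F_\fq$ for some $e$.

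Now evaluate. For $n \ge m$ large, $\kappa$, $V$, and the finitely many witnesses are $G(n)$-invariant, i.e.\ live in $K_n \subset F_n = A_n\otimes V_n$. Evaluation is exact, so $0\to K_n \to F_n \to M_n$ holds, and surjectivity at $\fq_n$ descends by the same finite-data argument as in Proposition~\ref{prop:nak1}. Moreover, $F_n/\fq_n F_n \to M_n/\fq_n M_n$ is an isomorphism after tensoring with $\kappa(\fq_n)$. The minimal number of generators of $(M_n)_{\fq_n}$ is then $\dim V_n$. If $M_n$ were flat at $\fq_n$, it would be free of rank $\dim V_n$, forcing $(K_n)_{\fq_n}=0$. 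But $\kappa \neq 0$ in $(F_n)_{\fq_n}$ for $n$ large, since $\kappa \ne 0$ in $A_\fq\otimes V$: a nonzero coefficient $a\in A$ stays nonzero in $(A_n)_{\fq_n}\subset A_\fq$, the localization maps being injective on the reduced domain parts after we discard the components not containing $\fq$ as in Proposition~\ref{prop:nak3}. This contradiction shows $\fq_n\in W_n$.

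The main obstacle I expect is the first reduction step together with the bookkeeping of shifts and localizations: the localizing element $\tilde h$ is only $G(m)$-invariant, and one must check that the local ring $(A_n)_{\fq_n}$ and the localized evaluation are compatible (inverting $\tilde h\in A_n$ commutes with $G(n)$-invariants, as noted in the previous lemma). One must also check that a component $Y$ of $Z$ gives $Y_n$ whose generic point is exactly $\fq_n$. I would handle this by working throughout with $n\ge m$ and using that $\tilde h \notin \fq_n$.
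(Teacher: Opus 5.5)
Your argument is correct, but the key step differs from the paper's. The paper handles a $\GL$-stable prime $\fp\in Z$ in one line. It writes $M_\fp=\varinjlim (M_n)_{\fp_n}$ as a module over $A_\fp=\varinjlim (A_n)_{\fp_n}$; since a filtered colimit of flat modules is flat, $(M_n)_{\fp_n}$ can be flat for only finitely many $n$. The passage to the generic points of the components of $Z$, using closedness of $W_n$, is then exactly your first paragraph. You instead rebuild the presentation $F=A\otimes V\to M$ of Lemma~\ref{lem:flat1} (generic freeness plus Proposition~\ref{prop:nak1}). You note that $K_\fq\neq 0$, since otherwise $M_\fq\cong F_\fq$ would be free. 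Then you show that one element $\kappa\in K$ survives in $(K_n)_{\fq_n}$, which contradicts $(M_n)_{\fq_n}$ being free of rank $\dim V_n$. Your route brings in generic freeness and the equivariant Nakayama lemma where the paper needs only $M=\bigcup_n M_n$. In exchange, it gives a concrete finite-level witness of non-flatness, with a threshold for $n$ read off from when $\kappa$ becomes $G(n)$-invariant.

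Some side remarks in your write-up should be cleaned up.

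\begin{enumerate}
\item The parenthetical ``i.e.\ $K_\fq\not\subset\fq F_\fq$'' is false. Since $F/\fq F\to M/\fq M$ is an isomorphism, reducing $0\to K_\fq\to F_\fq\to M_\fq\to 0$ modulo $\fq$ shows $K_\fq\subset\fq F_\fq$. Fortunately you never use it.

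\item The appeals to Proposition~\ref{prop:nak2}, the Krull intersection theorem, and discarding components are unnecessary. If $\kappa\neq 0$ in $F_\fq$, then $\kappa\neq 0$ in $(F_n)_{\fq_n}$, simply because $A_n\setminus\fq_n\subset A\setminus\fq$.

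\item The isomorphism $F_n/\fq_nF_n\cong M_n/\fq_nM_n$ needs its one-line justification: $(\fq M)^{G(n)}=\fq_nM_n$, by exactness and tensor-compatibility of $G(n)$-invariants. Combined with the classical Nakayama lemma over $(A_n)_{\fq_n}$, this also gives surjectivity at $\fq_n$ directly, without redoing the finite-data argument.
\end{enumerate}
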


\begin{proof}
Let $\fp \subset A$ be a $\GL$-stable prime that belongs to the non-flat locus $Z$. We have $M_{\fp}=\varinjlim (M_n)_{\fp_n}$ as a module over $A_{\fp} = \varinjlim (A_n)_{\fp_n}$, so if $(M_n)_{\fp_n}$ were flat infinitely often over $(A_n)_{\fp_n}$ then $M_{\fp}$ would be flat over $A_{\fp}$ \stacks{05UU}, which it is not. We thus see that there is some $n(\fp)$ such that $(M_n)_{\fp_n}$ is not flat over $(A_n)_{\fp_n}$ for all $n \ge n(\fp)$. Equivalently, this means that $\fp_n$ belongs to $W_n$ for all $n \ge n(\fp)$.

Let $Z^1, \ldots, Z^r$ be the irreducible components of $Z$, let $\fp^1, \ldots, \fp^r$ be their generic points, and let $n_0$ be the maximum of $n(\fp^1), \ldots, n(\fp^r)$. Then for $n \ge n_0$, we see that $W_n$ contains $\fp^1_n, \ldots, \fp^r_n$. Since these are dense in $Z_n$, it follows that $Z_n \subset W_n$. As we have already established the reverse containment, the result follows.
\end{proof}

\end{document}